\newtheorem{theorem}[equation]{Theorem}
\newtheorem{definition}[equation]{Definition}
\newtheorem{lemma}[equation]{Lemma}
\newtheorem{conjecture}[equation]{Conjecture}
\newtheorem{proposition}[equation]{Proposition}
{\theorembodyfont{\rmfamily}

\newtheorem{remark}[equation]{Remark}

}
\newcommand{\qed}{\hfill $\square$ \medskip}
\newenvironment{proof}[1][Proof]{\noindent\textbf{#1.} }{\
\qed}
\renewcommand{\exp}{\mathrm{exp}}
\newcommand{\wt}{\widetilde}
\newcommand{\caP}{\mathcal{P}}
\newcommand{\caT}{\mathcal{T}}
\newcommand{\T}{\mathcal T}
\newcommand{\X}{\mathcal X}
\newcommand{\R}{\mathbb R}
\newcommand{\C}{\mathbb C}
\newcommand{\Z}{\mathbb Z}
\newcommand{\E}{\mathbb E}
\newcommand{\G}{\mathbb G}
\newcommand{\ch}[1]{\negthinspace\negthinspace\negthinspace\phantom{a}^\vee\negthinspace #1}
\newcommand{\h}{\mathfrak h}
\newcommand\inv{^{-1}}
\newcommand{\g}{\mathfrak g}
\newcommand{\socle}{\mathrm{socle}}
\newcommand{\cosocle}{\mathrm{cosocle}}
\DeclareMathOperator\iNT{int}
\DeclareMathOperator\Int{Int}
\DeclareMathOperator\Hom{Hom}
\DeclareMathOperator\diag{diag}
\DeclareMathOperator\Out{Out}
\DeclareMathOperator\Aut{Aut}
\DeclareMathOperator\Ind{Ind}
\DeclareMathOperator\Cent{Cent}
\DeclareMathOperator\Norm{Norm}
\DeclareMathOperator\sgn{sgn}
\DeclareMathOperator\RE{Re}
\renewcommand{\sec}[1]{\section{#1}
\renewcommand{\theequation}{\thesection.\arabic{equation}}
  \setcounter{equation}{0}}
\newcommand{\subsec}[1]{\subsection{#1}
\renewcommand{\theequation}{\thesection.\arabic{equation}}}
\DeclareMathOperator\Lie{Lie}
\newcommand{\Grad}{G_{\text{rad}}}
\newcommand{\chirad}{\chi_{\text{rad}}}
\newcommand{\chiinf}{\chi_{\text{inf}}}
\renewcommand{\L}[1]{\overset{L}{\vphantom{a}}\negthinspace #1}
\renewcommand{\E}[1]{\overset{E}{\vphantom{a}}\negthinspace #1}
\renewcommand{\d}[1]{\overset{d}{\vphantom{a}}\negthinspace #1}
\newcommand{\LG}{\overset{L}{\vphantom{a}}\negthinspace G}
\newcommand{\LH}{\L H}
\newcommand{\EH}{\E H}
\begin{document}

\title{Contragredient representations and characterizing the local
     Langlands correspondence}
\author{Jeffrey Adams\\jda@math.umd.edu}
\affil{Department of Mathematics\\
University of Maryland\\
College Park, MD 20742\\
}
\author{David A. Vogan Jr.\\
dav@math.mit.edu}
\affil{Department of Mathematics\\
Massachusetts Institute of Technology\\
Cambridge, MA 02139}
\maketitle

{\renewcommand{\thefootnote}{} 
\footnote{2000 Mathematics Subject Classification: 22E50, 22E50, 11R39}
\footnote{Supported in part by  National Science
Foundation Grants \#DMS-0967566 and \#DMS-1317523 (first author) and
\#DMS-0968275 (both authors)
}}

\sec{Introduction}
It is surprising that the following question has not been addressed in
the literature:
what is the contragredient in terms of
Langlands parameters?

Thus suppose $G$ is 
a connected, reductive algebraic group
defined over a local field $F$, and $G(F)$ is its $F$-points.
According to the local Langlands conjecture, associated to an admissible
homomorphism $\phi$ from the Weil-Deligne group of $F$ into the
L-group of $G(F)$ is an L-packet $\Pi(\phi)$, a finite set of (equivalence classes of)
irreducible admissible representations of $G(F)$. (If $F$ is archimedean, ``equivalence'' means ``infinitesimal equivalence.'' If $F$ is non-archimedean, it means ``equivalence of smooth vectors.'') Conjecturally these
L-packets partition the admissible dual.

So suppose $\pi$ is an irreducible admissible representation, and
$\pi\in \Pi(\phi)$. Let $\pi^*$ be the contragredient, or dual, of
$\pi$ (see \eqref{e:contragredient}). The question is: what is the
homomorphism $\phi^*$ such that $\pi^*\in\Pi(\phi^*)$?  We also
consider the related question of describing the {\it Hermitian dual}
in terms of Langlands parameters.  Both of the questions come down to
a characterization of the local Langlands correspondence. For $\R$
this is the content of Sections \ref{s:lpackets} and \ref{s:lparameters}, especially
Definitions \ref{d:dslpacket} and \ref{d:general}.

Let $\ch G$ be the complex dual group of $G$.
A Chevalley involution $C$ of $\ch G$  
satisfies $C(h)=h\inv$, for all $h$ in some Cartan subgroup 
of $\ch G$.
The L-group $\LG$ of $G(F)$ is a certain semidirect product 
$\ch G\rtimes\Gamma$ where $\Gamma$ is the absolute Galois group of $F$
(or other related groups).
We can choose $C$ so that
it extends to an involution of $\L G$,
acting trivially on 
$\Gamma$. We refer to this as the Chevalley involution of $\LG$. See Section \ref{s:aut}.

We believe the  contragredient should correspond to composition with the Chevalley involution
of $\LG$. To avoid two levels of conjecture, we formulate this as
follows. 

\begin{conjecture}
\label{c:main}
Assume the local Langlands conjecture is known for both $\pi$ and
$\pi^*$. Let $C$ be the Chevalley involution of $\LG$. 
Then
$$
\pi\in\Pi(\phi)\Leftrightarrow \pi^*\in\Pi(C\circ\phi).
$$
\end{conjecture}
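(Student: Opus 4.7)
The plan is to verify the conjecture in every setting where the local Langlands correspondence (LLC) is established, using a common reduction to the case of relative discrete series. The guiding principle is that contragredient and the Chevalley involution are both inversion-type involutions: on an abelian group each is literal inversion, and on a Cartan subgroup of $\ch G$ the Chevalley involution acts as $h\mapsto h\inv$ by construction. So the statement should follow from general principles once it is checked at the level of tori, provided the correspondence is well behaved with respect to parabolic induction.

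First I would reduce to relative discrete series. If $\pi$ is the Langlands quotient of $\Ind_P^G(\sigma)$ with $\sigma$ a tempered representation of a Levi $M$, then $\pi^*$ is the Langlands quotient of $\Ind_P^G(\sigma^*)$, using that induction from the opposite parabolic gives an isomorphic module. On the dual side, the $L$-parameter of $\pi$ is the image of that of $\sigma$ under an embedding $\L M\hookrightarrow\LG$, and $C$ can be chosen to preserve this embedding and to restrict to the Chevalley involution of $\L M$. Hence the conjecture for tempered representations of Levi subgroups implies the conjecture for $\pi$, and iterating the reduction, one is reduced to relative discrete series of cuspidal Levi subgroups.

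For archimedean $F$ the problem then becomes a statement about quasicharacters of Cartan subgroups and Harish-Chandra parameters: the contragredient of a relative discrete series negates its Harish-Chandra parameter, while $C$ acts as inversion on $\ch T$. The main work in this case is bookkeeping --- aligning positive systems, $\rho$-shifts, and the $G(\R)$-conjugacy class of the Cartan subgroup so that $C\circ\phi$ represents the \emph{same} $\ch G$-conjugacy class of parameters as $\pi^*$, not merely one differing by an outer automorphism commuting with $\Gamma$.

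For nonarchimedean $F$ one verifies the statement case by case. For tori it is local class field theory and is essentially tautological. For $\GL_n$ it follows from the characterization of LLC by $L$- and $\epsilon$-factors of pairs, since both the contragredient on the automorphic side and the map $g\mapsto {}^tg\inv$ on the dual side send a pairing $\pi\otimes\pi'\to\C$ to its transpose, and $g\mapsto{}^tg\inv$ is $\ch G$-conjugate to the Chevalley involution of $\GL_n(\C)$. For quasi-split classical groups one would use the endoscopic characterization of Arthur and of Mok. The principal obstacle is that outside these settings the LLC itself is open, so the conjecture is necessarily conditional; even within them, the match between $\pi^*$ and $C\circ\phi$ must be made up to $\ch G$-conjugacy, and the verification requires delicate attention to normalizations and to the interaction of $C$ with the Galois action, especially in the non-quasi-split case where inner twisting intervenes.
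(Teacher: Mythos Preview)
This statement is a \emph{conjecture}; the paper does not prove it in general but only establishes the archimedean case (Theorem~\ref{t:main}). Your nonarchimedean discussion ($\GL_n$ via $L$- and $\epsilon$-factors, classical groups via Arthur and Mok) is a reasonable programme but lies outside what the paper attempts; the paper merely records that the $\GL_n$ case over $p$-adic fields is known. For the archimedean case your overall strategy matches the paper's: reduce to relative discrete series by compatibility with parabolic induction, then verify the claim there. The difference is in the key lemma at the discrete-series step. You cast it as bookkeeping with Harish-Chandra parameters, positive systems and $\rho$-shifts; the paper instead proves (Proposition~\ref{p:dslpacket}) that a relative discrete series L-packet is uniquely determined by its infinitesimal character and its \emph{radical} character (restriction of the central character to $\Grad(\R)$). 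This converts the verification into two clean checks --- $C$ negates the infinitesimal character on $\ch H_0$, and $C$ dualizes the radical character, which is the torus case (Lemma~\ref{l:Ctorus}) --- and bypasses the alignment of positive systems you anticipate.

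Your reduction step has a genuine imprecision. The contragredient $\pi^*$ is the Langlands quotient of $\Ind_{\overline P}^G(\sigma^*)$, not of $\Ind_P^G(\sigma^*)$, and the induced modules from $P$ and $\overline P$ are \emph{not} isomorphic in general; they agree only in the Grothendieck group, with opposite socle/cosocle structure. The paper handles this carefully via a socle/cosocle and intertwining-operator argument (the chain of equalities in the proof of Theorem~\ref{t:main2}); your phrase ``induction from the opposite parabolic gives an isomorphic module'' is not correct as stated. Secondly, the Langlands classification reduces first to \emph{tempered} representations, not directly to relative discrete series; a further step (tempered L-packets are irreducibly induced from relative discrete series of a smaller Levi) is needed before your discrete-series argument applies. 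The paper in effect folds this second reduction into its definition of L-packets (Definition~\ref{d:lpacket}), going straight from a general parameter to a relative discrete series parameter on a Levi.
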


Even the following weaker result is not known:
\begin{conjecture}
\label{c:second}
If $\Pi$ is an L-packet, then so is $\Pi^*=\{\pi^*\,|\, \pi\in\Pi\}$.
\end{conjecture}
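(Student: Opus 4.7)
My plan is to work through the stability formalism, which gives the most intrinsic characterization of L-packets available without invoking the full local Langlands correspondence. Conjecturally, an L-packet $\Pi$ is pinned down by two properties: the virtual character $\Theta_\Pi = \sum_{\pi\in\Pi}\Theta_\pi$ is a stable distribution on $G(F)$, and all members of $\Pi$ share certain discrete invariants (infinitesimal character in the archimedean case, and central character in general). If I can verify that $\Pi^*$ has both properties, then $\Pi^*$ is at worst a disjoint union of L-packets.

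For the stability I would use the classical identity $\Theta_{\pi^*}(g) = \Theta_\pi(g\inv)$, which gives $\Theta_{\Pi^*}(g) = \Theta_\Pi(g\inv)$. Inversion sends stable conjugacy classes to stable conjugacy classes---stably conjugate elements of $G(F)$ remain stably conjugate after inversion, since stable conjugacy is detected by the invariants of the Steinberg section---so the pullback by $g\mapsto g\inv$ of a stable distribution is stable. For the discrete invariants, central characters invert under contragredient, and infinitesimal characters are acted upon by the $-1$ involution on the dual Cartan, which is precisely what the Chevalley involution of $\ch G$ induces. So all members of $\Pi^*$ share these transformed invariants, and their characters sum to a stable distribution.

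The main obstacle is the converse: ruling out that $\Pi^*$ decomposes nontrivially as a disjoint union of several L-packets. This amounts to showing that L-packets are the \emph{minimal} sets of representations with given invariants whose characters sum to a stable distribution---a statement which, in full generality, is essentially equivalent to the local Langlands correspondence itself. From this perspective Conjecture \ref{c:second} is not appreciably weaker than Conjecture \ref{c:main} in the generality stated, and any honest proof for a given $G(F)$ will have to exploit a known parameterization.

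A more tractable route is therefore case-by-case: for $\GL(n,F)$ every L-packet is a singleton and the statement is immediate once one knows contragredient preserves irreducibility; for real reductive groups one can use Vogan's classification and directly track the behavior of the Langlands parameters of the irreducible constituents under contragredient; and for quasisplit classical $p$-adic groups one can invoke Arthur's and Mok's construction via twisted endoscopic transfer from $\GL(N)$, where the claim reduces to a symmetry of stable transfer under character inversion.
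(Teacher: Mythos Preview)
Your analysis is sound, but note first that the paper does \emph{not} prove this conjecture in general: it is stated precisely as a conjecture, and the paper only establishes it over $\R$ (and $\C$), as an immediate corollary of the stronger Theorem~\ref{t:main}. So your conclusion that any honest proof must exploit a known parameterization, and that the general statement is essentially as deep as Conjecture~\ref{c:main}, matches the paper's own assessment.

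Where you diverge is in method. Your route goes through stable distributions: show $\Theta_{\Pi^*}$ is stable by pulling back $\Theta_\Pi$ along inversion, then try to argue minimality. The paper never touches stability. Instead, for $F=\R$ it characterizes L-packets intrinsically by three data---infinitesimal character, radical character (restriction of the central character to $G_{\text{rad}}(\R)$), and compatibility with parabolic induction---and then checks directly that contragredient transforms each of these in the way predicted by composing $\phi$ with the Chevalley involution of ${}^L G$. For relative discrete series this reduces to Proposition~\ref{p:dslpacket} (an L-packet is determined by infinitesimal and radical characters) together with the torus case; the general case follows by induction in stages. Your stability argument gives a conceptually uniform reason why $\Pi^*$ should at least be a union of L-packets over any $F$, which the paper's method does not, but it cannot by itself resolve the minimality obstruction you correctly flag. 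The paper's method sidesteps that obstruction entirely in the archimedean case by never needing to decompose anything: it produces a single $\phi'$ with $\Pi(\phi')=\Pi^*$ directly.

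One small correction to your case-by-case sketch: for real groups you invoke ``Vogan's classification and directly track the behavior of the Langlands parameters of the irreducible constituents.'' That is finer than what the paper needs---the paper works only at the level of packets, not individual representations, and the key structural input is Proposition~\ref{p:dslpacket} rather than the full classification of irreducibles.
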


\medskip

The local Langlands conjecture is only known, for fixed $G(F)$ and all
$\pi$, in a limited number cases, notably $GL(n,F)$ over any local
field \cite{harris_taylor}, \cite{henniart_llc}, and for any $G$ if
$F=\R$ or $\C$. See Langlands's original paper
\cite{langlandsClassification}, which is summarized in Borel's article
\cite{borelCorvallis}.  On the other hand it is known for a restricted
class of representations for more groups, for example unramified
principal series representations of a split p-adic group
\cite{borelCorvallis}*{10.4}.  

For discrete series L-packets of real groups, Conjecture \ref{c:main} 
follows from \cite{shelstad_structure}*{Lemma 7.4.1}, 
and it is known 
for some representations of some
quasisplit $p$-adic groups by 
\cite{kaletha_genericity}.
It would be reasonable to impose Conjecture \ref{c:main} as a
condition on the local Langlands correspondence in cases where it is
not known.

We concentrate on the Archimedean case.
Let $W_\R$ be the Weil group of $\R$. The contragredient in this case
can be realized either via the Chevalley automorphism of $\LG$, or via
a similar automorphism of $W_\R$. Let $\C^*$ be the multiplicative group 
of nonzero complex numbers. 
By analogy with the Chevalley involution $C$ of $G$, there is a unique $\C^*$-conjugacy class of
automorphisms $C_{W_\R}$ of $W_\R$ satisfying $C_{W_\R}(z)=z\inv$ for all
$z\in\C^*$. 
See Section \ref{s:aut}. 

\begin{theorem}
\label{t:main}
Let $G(\R)$ be the real points of a connected reductive algebraic
group defined over $\R$, with L-group $\LG$.
Suppose $\phi\colon W_\R\rightarrow \LG$ is an admissible homomorphism, 
with associated L-packet $\Pi(\phi)$. Then
$$
\Pi(\phi)^*=\Pi(C\circ\phi)=\Pi(\phi\circ C_{W_\R}).
$$
In particular $\Pi(\phi)^*$ is an L-packet.
\end{theorem}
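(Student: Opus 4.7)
The plan is to handle the two equalities in sequence, reducing the main content (the first equality) to discrete series via the Langlands classification for $G(\R)$.

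To prove $\Pi(C\circ\phi)=\Pi(\phi\circ\tau)$, I would show directly that $C\circ\phi$ and $\phi\circ\tau$ are $\ch G$-conjugate as homomorphisms $W_\R\to\LG$. Up to $\ch G$-conjugation one can arrange $\phi(\C^{*})\subset\ch T$ with $C$ acting as inversion on $\ch T$, so that both parameters send $z\in\C^{*}$ to $\phi(z)^{-1}$. For the remaining generator $j\in W_\R$, one writes $\phi(j)=t\rtimes\sigma$ and uses the freedom in the $\C^{*}$-conjugacy class of $\tau$; matching $C(t)\rtimes\sigma$ with $\phi(\tau(j))$ up to conjugation by an element centralizing $\phi(\C^{*})$ is then a short Galois-cohomology argument in $\ch G$.

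For the main equality $\Pi(\phi)^{*}=\Pi(C\circ\phi)$, I would invoke the Langlands classification. Every admissible parameter comes from cuspidal data $(\LM,\phi_{M},\nu)$ with $\LM\subset\LG$ an L-Levi, $\phi_{M}$ tempered, and $\nu$ a strictly positive real character, and $\Pi(\phi)$ consists of the Langlands quotients of the induced representations. The Chevalley involution $C$ of $\LG$ preserves such $\LM$, restricts to the Chevalley involution of $\LM$, and sends $\nu$ to $-\nu$. Since contragredient commutes with induction from opposite parabolics and preserves Langlands quotients, this reduces the theorem to the tempered case. A second parabolic-induction step (real parabolic induction, which builds tempered packets from discrete series on Levis) further reduces to essentially-square-integrable parameters. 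In the discrete series case, $\phi$ takes values in a compact-modulo-center Cartan $\ch T$ with $\phi|_{\C^{*}}$ regular; writing $\phi(z)=z^{\lambda}\bar z^{\mu}$, composition with $C$ negates $\lambda$ and $\mu$, which is precisely the effect on Harish-Chandra parameters of passing to the contragredient.

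The main obstacle is the parabolic-reduction bookkeeping in the middle step: one must verify that $C$ is genuinely compatible with the L-Levi data $(\LM,\nu)$, that the tempered-to-discrete reduction matches the $R$-group action of $C$ on the tempered L-packet with the action of contragredient, and that the inner-automorphism ambiguity in the definition of $C$ does not affect the resulting packet (or its individual elements, if one eventually wants a bijective statement). Once these compatibilities are in place, the discrete-series verification is a direct computation on Harish-Chandra parameters, and the final assertion that $\Pi(\phi)^{*}$ is an L-packet follows from the first equality by construction.
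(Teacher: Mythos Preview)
Your overall architecture matches the paper's: reduce part~(a) to the (relative) discrete series case via parabolic induction, and handle part~(b) by showing that $C\circ\phi$ and $\phi\circ\tau$ are $\ch G$-conjugate. The paper actually performs the parabolic reduction in a single step, directly to a relative discrete series packet on a Levi (Definition~\ref{d:general}), rather than first to tempered and then to discrete series; this sidesteps the $R$-group bookkeeping you flag as an obstacle.

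The substantive gap is your discrete-series base case. You assert that composing with $C$ negates $\lambda$ and $\mu$ and that this ``is precisely the effect on Harish-Chandra parameters of passing to the contragredient.'' But $\phi|_{\C^*}$, equivalently the infinitesimal character, does not in general determine the L-packet, and you have said nothing about what $C$ does to $\phi(j)$. The paper closes this gap via Proposition~\ref{p:dslpacket}: a relative discrete series L-packet is uniquely determined by its infinitesimal character together with its \emph{radical character} (the restriction of the central character to $\Grad(\R)$). One then checks that $C$ negates the infinitesimal character (immediate from $C|_{\ch H_0}=$ inversion) and dualizes the radical character; the latter is exactly the torus case (Lemma~\ref{l:Ctorus}), and this is where $\phi(j)$ actually enters. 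Without some invariant playing the role of the radical character, your base-case verification is incomplete.

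For part~(b), your ``short Galois-cohomology argument'' is the right idea but left vague. The paper makes it precise via the Tits group: after normalizing so that $\phi(j)=g\ch\delta$ with $g\in\Norm_{\ch G}(\ch H_0)$ representing $w$ with $w\ch\theta_0(w)=1$, Lemma~\ref{l:tits}(b) shows directly that $C(g\ch\delta)$ is $\ch H_0$-conjugate to $(g\ch\delta)^{-1}=\phi(\tau(j))$ for the normalization $\tau(j)=j^{-1}$.
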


Here is a sketch of the proof.

It is easy to prove in the case of tori. See Section \ref{s:tori}. 

It is well known that an L-packet $\Pi$ of (relative) discrete series
representations is determined by an infinitesimal and a central
character. In fact something stronger is true.
Let $\Grad$ be the radical of $G$, i.e., the maximal central torus.
Then $\Pi$
is determined by an
infinitesimal character and a
character of $\Grad(\R)$, which we refer to as a radical character.

In general it is easy to read off the infinitesimal
and radical characters  of $\Pi(\phi)$, see  \cite{borelCorvallis} 
and Section \ref{s:chars}.
In particular for a relative discrete series parameter Theorem  \ref{t:main}
reduces to a claim about how $C$ affects the infinitesimal 
and radical characters.
For the radical 
character this reduces to the case of tori, and Theorem \ref{t:main}
follows  in this case.

This is the heart of the matter, and the general case follows easily
by parabolic induction. In other words, the proof relies on the fact
that the parameterization is  uniquely characterized by:
\begin{enumerate}
\item Infinitesimal character,
\item Radical character,
\item Compatibility with parabolic induction.
\end{enumerate}
In a sense this is the main result of the paper: a self-contained
description of the local Langlands classification, and its characterization by (1-3).
See Sections \ref{s:lpackets} and \ref{s:lparameters}.
Use of the Tits group (see Section \ref{s:tits}) 
simplifies some technical arguments.

Now consider  $GL(n,F)$ for $F$ a local field of characteristic $0$.
Since $GL(n,F)$ is split, an admissible homomorphism into the L-group
is the same thing as a homomorphism into the dual group $GL(n,\C)$ (in
which the Weil group acts semisimply); that is, 
$\phi$ may be identified with an $n$-dimensional complex representation of the
Weil-Deligne group $W'_F$. 
In this case L-packets are singletons, so write $\pi(\phi)$ for the
representation attached to $\phi$.

For the Chevalley involution take $C(g)=\phantom{}^tg\inv$. Composing
any finite-dimensional representation into $GL(n)$ with the inverse
transpose gives the contragredient representation; so
$C\circ\phi$ is equivalent to the contragredient $\phi^*$ of $\phi$.
Over $\R$ Theorem \ref{t:main} says that the
Langlands correspondence commutes with the contragredient: 
\begin{equation}
\label{e:dualgln}
\pi(\phi^*)\simeq \pi(\phi)^*.
\end{equation}
This is also true over a $p$-adic field \cite{harris_taylor},
\cite{henniart_llc}, in which case it  is closely related to the
functional equations for L and $\varepsilon$ factors.

\medskip

We now consider a variant of \eqref{e:dualgln} in the real
case. Suppose $\pi$ is an irreducible representation of $GL(n,\R)$.
Its Hermitian dual $\pi^h$ is the unique irreducible representation,
admitting a nonzero invariant sesquilinear pairing with $\pi$. The
representation $\pi$ admits a nonzero invariant Hermitian form if and
only if $\pi\simeq \pi^h$. In this case we say $\pi$ is Hermitian.
See Section \ref{s:hermitian}.

The Hermitian dual arises naturally in the study of unitary
representations: the unitary dual is the subset of the fixed points of
the involution $\pi \mapsto \pi^h$, consisting of those $\pi$ for
which the invariant form is definite. So it is natural to ask what the
Hermitian dual is on the level of Langlands parameters.

There is a natural notion of Hermitian dual of a finite-dimensional
representation $\phi$ of any group:
$\phi^h=\phantom{}^t\overline{\phi}\,\inv$, and 
$\phi$ preserves a nondegenerate Hermitian form
if and only if $\phi\simeq\phi^h$.

The local Langlands correspondence for $GL(n,\R)$ commutes with the
Hermitian dual operation:

\begin{theorem}
\label{t:hermitian}
Suppose $\phi$ is an n-dimensional semisimple  representation of $W_\R$. 
Then:
\begin{enumerate}
\item $\pi(\phi^h)=\pi(\phi)^h$,
\item $\phi$ is Hermitian  if and only if $\pi(\phi)$ is Hermitian,
\item $\phi$ is unitary if and only if $\pi(\phi)$ is tempered.
\end{enumerate}

\end{theorem}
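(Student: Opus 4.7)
The plan is to factor the Hermitian dual, on both sides of the correspondence, as the contragredient followed by complex conjugation, and then invoke Theorem~\ref{t:main} for the contragredient part. On the parameter side, writing $C$ for the Chevalley involution $g\mapsto {}^tg\inv$ of $GL(n,\C)$, one has $\phi^h={}^t\overline\phi\,\inv=\overline{C\circ\phi}$. On the representation side, for $GL(n,\R)$ the Hermitian dual $\pi^h$ of an irreducible Harish-Chandra module is isomorphic to the complex conjugate of the contragredient, $\overline{\pi^*}$, since the Cartan involution is transpose-inverse and the defining Hermitian pairing is conjugate-linear in one variable. Granting these reformulations, part~(1) reduces via Theorem~\ref{t:main} to the single identity
\begin{equation*}
\pi(\overline\phi)\simeq\overline{\pi(\phi)}.
\end{equation*}

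To prove this identity I would argue by the Langlands classification on both sides. Writing $\phi$ as a direct sum of one-dimensional characters and two-dimensional irreducible representations of $W_\R$, the representation $\pi(\phi)$ is the Langlands quotient of a standard module parabolically induced from the corresponding tempered building blocks on a Levi $GL(n_1,\R)\times\cdots\times GL(n_k,\R)$. Both $\phi\mapsto\overline\phi$ and $\pi\mapsto\overline\pi$ commute with direct sum and with normalized parabolic induction, and they preserve the Langlands data (shifts, orderings), so the claim reduces to two base cases: one-dimensional characters of $\R^\times=GL(1,\R)$, on which $\chi=|\cdot|^\nu\sgn^\epsilon$ is sent to $|\cdot|^{\overline\nu}\sgn^\epsilon$ on both sides; and two-dimensional irreducible parameters $\phi=\Ind_{W_\C}^{W_\R}\psi$ with $\psi$ a non-Galois-fixed character of $\C^\times$, which must be matched with the complex-conjugate discrete series of $GL(2,\R)$. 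The second verification is the main technical step and requires careful tracking of the normalizations in the Langlands correspondence for $GL(2,\R)$.

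Part~(2) is then immediate from part~(1) and the injectivity of $\phi\mapsto\pi(\phi)$: $\phi\simeq\phi^h$ if and only if $\pi(\phi)\simeq\pi(\phi^h)=\pi(\phi)^h$. Part~(3) is essentially the classical characterization of tempered L-packets as those attached to parameters with relatively compact image (cf.\ \cite[10.3]{borelCorvallis}), combined with the observation that a semisimple representation $\phi:W_\R\to GL(n,\C)$ has bounded image if and only if it is unitary, since every compact subgroup of $GL(n,\C)$ is conjugate into $U(n)$. The main obstacle throughout is the explicit verification of complex-conjugation compatibility on the two-dimensional parameters; everything else is formal once Theorem~\ref{t:main} and the standard parameterization machinery are in hand.
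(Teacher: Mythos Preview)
Your approach is correct but takes a genuinely different route from the paper's. The paper does \emph{not} invoke Theorem~\ref{t:main} at all: instead it runs an argument parallel to that proof. Two short lemmas show that on the representation side $\pi\mapsto\pi^h$ sends $\chiinf\mapsto -\overline{\chiinf}$, sends the central (radical) character to its Hermitian dual, and commutes with normalized parabolic induction; and that on the parameter side $\phi\mapsto\phi^h$ has exactly the same effect on $\chiinf(\phi)$ and $\chirad(\phi)$. Since the correspondence for $GL(n,\R)$ is characterized by these three invariants (Sections~\ref{s:lpackets}--\ref{s:lparameters}), part~(1) follows immediately, with no case analysis on $GL(2)$ and no separate complex-conjugation compatibility to verify. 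Parts~(2) and~(3) are handled as you do.

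Your factorization $\phi^h=\overline{C\circ\phi}$ and $\pi^h\simeq\overline{\pi^*}$ is valid (though the latter holds for any real group from the definitions, not because the Cartan involution happens to be transpose-inverse; that aside is a mis-justification). The cost of your route is that you must then prove $\pi(\overline\phi)\simeq\overline{\pi(\phi)}$, which you correctly identify as the residual work, and which you propose to do by reducing to $GL(1)$ and $GL(2)$. That is fine, but it reproduces in a different guise the same bookkeeping the paper absorbs into its two lemmas. The paper's approach is a bit cleaner here because it stays entirely within the infinitesimal/radical/induction framework already built and never needs the $GL(2)$ check; your approach has the conceptual appeal of isolating the complex-conjugation compatibility as a statement of independent interest and of reusing Theorem~\ref{t:main} as a black box.
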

See Section \ref{s:hermitian}. 
\medskip

Return now to the setting of general real groups.
The space  $\X_0$ of conjugacy classes of L-homomorphisms
parametrizes L-packets of representations. By introducing some extra
data we obtain a space $\X$ which parametrizes irreducible
representations \cite{abv}.  Roughly speaking $\X$ is the set of
conjugacy classes of 
pairs $(\phi,\chi)$ where $\phi\in\X_0$ and $\chi$ is a character of
the component group of $\Cent_{\ch G}(\phi(W_\R))$. It is natural to ask
for the involution of $\X$ induced by the contragredient.

On the other hand, it is possible to formulate and prove an analogue
of Theorem \ref{t:hermitian} for general real groups,
in terms of an antiholomorphic involution of $\LG$. Also, the
analogue of Theorem \ref{t:hermitian}  holds in the $p$-adic case.
All of these topics require  more machinery.
In an effort to keep the presentation as elementary as possible we defer 
them to later papers.

This paper is a complement to \cite{real_chevalley}, which 
considers the action of the Chevalley involution of $G$,
rather than $\LG$. See Remark \ref{r:dual}.

We thank Kevin Buzzard for asking about the contragredient on the
level of L-parameters. We also thank the referee for carefully reading
the paper and making number of suggestions, such as changing the
title, which substantially improved the exposition.

\sec{The Chevalley Involution}
\label{s:aut}

We discuss the Chevalley involution. 
This is well known, although there isn't a good reference for
all of the details we need (Chevalley cites the existence of this
automorphism without proof in \cite{chevalley_simples}*{page 23}).
For the convenience of the reader we give
complete details. We also discuss a similar involution of $W_\R$.

Throughout this paper $G$ is a connected, reductive algebraic group. 
We may identify it with its complex points, and write $G(\C)$ on occasion
to emphasize this point of view. 
For $x\in G$ write $\iNT(x)$ for the inner automorphism
$\iNT(x)(g)=xgx\inv$. 

\begin{proposition}
\label{p:CH}
Fix a Cartan subgroup $H$ of $G$.
There is an automorphism $C$ of $G$ 
satisfying $C(h)=h\inv$ for all $h\in H$.
For any such automorphism $C^2=1$, and, 
for every  semisimple element $g\in G$, $C(g)$ is conjugate to
$g\inv$.

Suppose $C_1$ and $C_2$ are two such automorphisms defined with respect to
Cartan subgroups $H_1$ and $H_2$. Then $C_1$ and $C_2$ are conjugate
by an inner automorphism of $G$. 
\end{proposition}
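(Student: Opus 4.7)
The plan is to reduce all four assertions of the proposition to the existence of a single well-chosen $C_0$ with $C_0^2=1$; every other property then propagates through the conjugacy assertion. Given such a $C_0$, the conjugacy-of-$C(g)$-to-$g\inv$ claim is essentially automatic: for semisimple $g$, pick $k\in G$ with $k\inv g k\in H$, and compute
$$
C(g)=C(k)\,C(k\inv g k)\,C(k)\inv=\bigl(C(k)k\inv\bigr)\,g\inv\,\bigl(C(k)k\inv\bigr)\inv.
$$

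For existence, fix a Borel $B\supset H$ with simple roots $\Delta$ and a pinning $\{X_\alpha\}_{\alpha\in\Delta}$. Let $w_0$ be the longest element of $W=N_G(H)/H$. The map $\theta\colon\lambda\mapsto -w_0\lambda$ on $X^*(H)$ is an involutive automorphism of the based root datum, since $-w_0$ permutes $\Delta$. By the Chevalley isomorphism theorem, $\theta$ lifts to a pinning-preserving automorphism $\sigma$ of $G$; because $\sigma^2$ preserves the pinning and induces $\theta^2=1$, one gets $\sigma^2=1$. Let $\dot w_0\in N_G(H)$ be the Tits lift of $w_0$ assembled from canonical lifts of the simple reflections (Section \ref{s:tits}); the Tits relations yield $\sigma(\dot w_0)=\dot w_0\inv$. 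Set $C_0=\int(\dot w_0)\circ\sigma$. On characters $C_0|_H$ acts as $w_0\cdot(-w_0)=-1$, so $C_0(h)=h\inv$ on $H$, while
$$
C_0^2=\int\bigl(\dot w_0\,\sigma(\dot w_0)\bigr)\circ\sigma^2=\int(1)=1.
$$

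For uniqueness (same $H$), if $C$ is another such automorphism then $\alpha:=C\circ C_0\inv$ fixes $H$ pointwise and so acts trivially on $X^*(H)$. The quotient $\Aut(G)/\text{(inner)}$ injects into $\Aut(\text{based root datum})$, which itself injects into $\Aut(X^*(H))$, so $\alpha$ is inner; writing $\alpha=\int(g)$ and using $C_G(H)=H$ forces $g\in H$. Thus $C=\int(h)\circ C_0$ for some $h\in H$. Because the complex torus $H$ is divisible, pick $g\in H$ with $g^2=h$; using $C_0(g^{\pm1})=g^{\mp1}$,
$$
\int(g)\circ C_0\circ\int(g)\inv=\int(g^2)\circ C_0=\int(h)\circ C_0=C,
$$
so $C$ is inner-conjugate to $C_0$, which immediately gives $C^2=1$ for every such $C$. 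For different Cartans $H_1,H_2$, choose $k$ with $kH_1k\inv=H_2$; then $\int(k)\inv\circ C_2\circ\int(k)$ is a Chevalley involution for $H_1$, and the same-$H$ case finishes the job.

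The main obstacle is arranging $C_0^2=1$, which depends on the precise compatibility $\sigma(\dot w_0)=\dot w_0\inv$. A generic representative of $w_0$ in $N_G(H)$ only yields $C_0^2\in\int(H)$, and the rest of the argument would then have to work modulo this residual ambiguity. The Tits section, engineered to satisfy the braid relations with the right squares in $H$, is exactly what eliminates the obstruction.
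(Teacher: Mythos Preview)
Your overall architecture matches the paper: construct one involution $C_0$ inverting $H$, then show every other such $C$ is inner-conjugate to it (same $H$ via $C=\int(h)\,C_0$ and the square-root trick; different $H$ by first conjugating the Cartans). The semisimple-conjugacy computation and the uniqueness argument are fine and essentially identical to the paper's.

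There is, however, a genuine error in your existence step. The identity $\sigma(\dot w_0)=\dot w_0^{-1}$ that you single out as ``the main obstacle'' is \emph{false} in general. Since $\sigma$ is a $\caP$-distinguished automorphism, Lemma~\ref{l:longfixed} gives $\sigma(\dot w_0)=\dot w_0$, not $\dot w_0^{-1}$; these disagree whenever $\dot w_0^2\neq 1$, which already happens in $\SL(2)$ (there $-w_0=1$, so $\sigma=1$, while $\dot w_0^2=-I$). Your computation therefore actually yields $C_0^2=\int\bigl(\dot w_0\,\sigma(\dot w_0)\bigr)=\int(\dot w_0^2)$, not $\int(1)$. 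The argument is salvageable: by Lemma~\ref{l:titslemma} one has $\dot w_0^2=\exp(2\pi i\ch\rho)\in Z(G)$, so $\int(\dot w_0^2)=1$ after all. But the justification you wrote is incorrect, and since you flagged this identity as the heart of the proof, the proposal as written has a gap.

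The paper sidesteps this entirely. Instead of factoring $-1$ on $X^*(H)$ as $w_0\cdot(-w_0)$ and then having to control a lift of $w_0$, it treats $-1$ directly as an isomorphism of based root data from $(B,H)$ to $(B^{op},H)$, and invokes Theorem~\ref{t:auts} to lift it to an automorphism $C_\caP$ carrying the pinning $\caP$ to the opposite pinning $\caP^{op}$. Then $C_\caP^2$ preserves $\caP$ and induces the identity on $D_b$, so $C_\caP^2=1$ immediately by the uniqueness clause of Theorem~\ref{t:auts}. No Tits representatives, no centrality of $\dot w_0^2$. Your route, once corrected, is a legitimate alternative, but it trades a one-line application of the isomorphism theorem for two nontrivial facts about the Tits section.
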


The proof uses {\em based root data} and {\em pinnings}. 
For background see \cite{springer_book}.
Fix a Borel subgroup $B$ of $G$, and a Cartan subgroup $H\subset
B$. Let $X^*(H), X_*(H)$ be the character and co-character lattices of
$H$, respectively.  Let $\Pi,\ch\Pi$ be the sets of simple
roots, respectively simple co-roots, defined by $B$. 

The {\em based root datum   defined by $(B,H)$} is
$(X^*(H),\Pi,X_*(H),\ch\Pi)$.
There is a natural notion of isomorphism of based root data. 
A  {\em pinning}  is a set
$\caP=(B,H,\{X_\alpha|\alpha\in\Pi\})$ where, 
for each $\alpha\in \Pi$, $X_\alpha\ne 0$ is contained in 
the $\alpha$-root space $\g_\alpha$ of 
$\g=\Lie(G)$. Let $\Aut(G)$ be the group of algebraic (equivalently,
holomorphic) automorphisms of  
$G$, $\Int(G)\subset \Aut(G)$ the inner automorphisms, and
$\Out(G)=\Aut(G)/\Int(G)$.  
Let $\Aut(\caP)$ be the subgroup of $\Aut(G)$ preserving
$\caP$.
We refer to the elements of $\Aut(\caP)$ as {\em $\caP$-distinguished
automorphisms}.

\begin{theorem}[\cite{springer_book}*{Theorem 9.6.2}]
\label{t:auts}
Suppose $G,G'$ are connected, reductive complex groups. Fix
pinnings $\caP=(B,H,\{X_\alpha\})$ and $\caP'=(B',H',\{X'_\alpha\})$.
Let $D_b,D_b'$ be the based root data defined by $(B,H)$ and
$(B',H')$. 

Suppose $\phi\colon D_b\rightarrow D'_b$ is an isomorphism of
based root data. Then there is a unique isomorphism
$\psi\colon G\rightarrow G'$
taking $\caP$ to $\caP'$ and inducing $\phi$ on the root
data. 

The only inner automorphism in $\Aut(\caP)$ is the
identity, and there  are isomorphisms
\begin{equation}
\label{e:outaut}
 \Out(G)\simeq \Aut(D_b)\simeq \Aut(\caP)\subset \Aut(G).
\end{equation}
\end{theorem}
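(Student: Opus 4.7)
The plan is to prove the three assertions in order—uniqueness of $\psi$, existence of $\psi$, and triviality of inner automorphisms preserving $\caP$—and then assemble them into the chain of isomorphisms \eqref{e:outaut}. For uniqueness, I would use that $G$ is generated as an algebraic group by $H$ together with the simple root subgroups $U_{\pm\alpha}$ for $\alpha\in\Pi$. The isomorphism $\phi$ of based root data forces $\psi|_H$ via the cocharacter lattice map, and the pinning condition forces $\psi(X_\alpha)=X'_{\phi(\alpha)}$, so $\psi$ is determined on each simple $U_\alpha$. Each $X_\alpha$ belongs to a unique $\mathfrak{sl}_2$-triple $(X_\alpha,H_\alpha,Y_\alpha)$ with $H_\alpha=d\ch\alpha(1)$, which determines $Y_\alpha$ and hence $\psi|_{U_{-\alpha}}$. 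Since these subgroups generate $G$, $\psi$ is unique.

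For existence, the key is the Chevalley--Steinberg presentation: a connected reductive complex group is reconstructible from its root datum via the torus $H$, root group isomorphisms $x_\alpha:\G_a\to U_\alpha$, and Steinberg's commutator relations among the $x_\alpha$ (together with the $H$-action by the roots). The isomorphism $\phi$ transports these generators and relations from $G$ to $G'$, producing the required $\psi$ respecting the pinnings. I would cite Springer for the verification that these relations suffice; this is the substantive content of the existence/isomorphism theorem for reductive groups.

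Next I would verify that the only inner automorphism in $\Aut(\caP)$ is the identity. If $\int(x)$ preserves $\caP$, then $x$ normalizes $B$, so $x\in B$; it also normalizes $H$, and since $N_B(H)=H$ we get $x\in H$. For each simple $\alpha$, $\int(x)$ acts on $\g_\alpha$ by the scalar $\alpha(x)$, so preserving $X_\alpha$ forces $\alpha(x)=1$; hence $x\in\bigcap_\alpha\ker\alpha=Z(G)$ and $\int(x)=1$. To assemble \eqref{e:outaut}, the restriction map $\Aut(\caP)\to\Aut(D_b)$ is then injective by uniqueness and surjective by existence, while the composite $\Aut(\caP)\hookrightarrow\Aut(G)\twoheadrightarrow\Out(G)$ is injective by what we just proved. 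For its surjectivity, $G$ acts transitively on pinnings---any two Borels are $G$-conjugate, any two maximal tori in a fixed Borel are $B$-conjugate, and $H$ scales each simple root space---so any $\sigma\in\Aut(G)$ can be adjusted by an inner automorphism to land in $\Aut(\caP)$, giving a section of $\Aut(G)\to\Out(G)$ on the image of $\Aut(\caP)$.

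The principal obstacle is the existence step: this is precisely the nontrivial content of the classification of reductive groups, relying on Chevalley's explicit construction of a group from a root datum and the Serre-type relations that verify its universality, and it is the reason the theorem is attributed to Springer's book rather than proved from scratch here. The remaining pieces---uniqueness from the generating set, centrality of pinning-preserving inner automorphisms, and transitivity of the inner action on pinnings---are then direct structural consequences that I expect to be routine.
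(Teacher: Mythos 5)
The paper offers no proof of this theorem: it is quoted directly from Springer's book, and the hard content (existence of the isomorphism realizing a given isomorphism of based root data) is exactly what is being cited. Your sketch is the standard and correct argument — uniqueness from generation by $H$ and the simple root subgroups $U_{\pm\alpha}$, triviality of pinned inner automorphisms via $N_G(B)=B$, $N_B(H)=H$ and $\bigcap_{\alpha\in\Pi}\ker\alpha=Z(G)$, and transitivity of $G$ on pinnings — and it defers the existence step to the same source the paper does, so there is no substantive divergence to report.
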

The following consequence of the theorem is quite useful.

\begin{lemma}
\label{l:inth}
Suppose $\tau\in\Aut(G)$ restricts trivially to a Cartan
subgroup $H$. Then $\tau=\iNT(h)$ for some $h\in H$.
\end{lemma}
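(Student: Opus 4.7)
The plan is to reduce the lemma to the uniqueness part of Theorem \ref{t:auts} by modifying $\tau$ by an inner automorphism $\int(h)$ until it becomes a pinning-preserving automorphism inducing the identity on the based root datum.

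First, I would observe that since $\tau$ fixes $H$ pointwise, its differential $d\tau$ commutes with $\Ad(h)$ for every $h\in H$, so $d\tau$ preserves each root space $\g_\alpha$. Fix a Borel $B\supset H$, giving a set $\Pi$ of simple roots and a pinning $\caP=(B,H,\{X_\alpha\mid\alpha\in\Pi\})$. For each $\alpha\in\Pi$ we then have $d\tau(X_\alpha)=c_\alpha X_\alpha$ for some $c_\alpha\in\C^*$.

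Next I would produce $h\in H$ with $\alpha(h)=c_\alpha$ for every $\alpha\in\Pi$. This is possible because the simple roots are $\Z$-linearly independent in $X^*(H)$, so the homomorphism $H\to(\C^*)^\Pi$, $h\mapsto(\alpha(h))_{\alpha\in\Pi}$, is surjective. With this $h$ in hand, set $\sigma=\int(h)\inv\circ\tau$. Then $\sigma$ still fixes $H$ pointwise, and by construction $d\sigma(X_\alpha)=X_\alpha$ for every simple root $\alpha$, so $\sigma\in\Aut(\caP)$. Moreover, $\sigma$ acts trivially on $X^*(H)$, hence on the based root datum $D_b$.

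Finally I would invoke the uniqueness statement of Theorem \ref{t:auts}: a $\caP$-distinguished automorphism is determined by the automorphism of $D_b$ it induces, and the identity of $D_b$ lifts to the identity of $G$. Therefore $\sigma=1$, so $\tau=\int(h)$, as desired. There is no real obstacle here; the only point that requires a moment's thought is the surjectivity used to produce $h$, which relies on the linear independence of simple roots (valid for any connected reductive $G$, even though the center of $G$ may be nontrivial), and that is where the proof would focus its only sentence of justification.
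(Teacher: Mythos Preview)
Your proof is correct and follows essentially the same approach as the paper's: fix a pinning, observe that $d\tau$ preserves each root space, choose $h\in H$ so that $\Ad(h)$ matches $d\tau$ on the simple root vectors, and conclude via the uniqueness in Theorem~\ref{t:auts} that $\int(h)^{-1}\circ\tau=1$. The only difference is that you spell out the surjectivity of $h\mapsto(\alpha(h))_{\alpha\in\Pi}$, which the paper leaves implicit.
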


\begin{proof}
Fix a pinning
$(B,H,\{X_\alpha\})$. 
Then $d\tau(\g_\alpha)=\g_\alpha$ for all  $\alpha$. 
Therefore we can choose 
$h\in H$ so that $d\tau(X_\alpha)=\text{Ad}(h)(X_\alpha)$ for
all $\alpha\in\Pi$. Then $\tau\circ\iNT(h\inv)$ acts trivially on
$D_b$ and $\caP$. By the theorem $\tau=\iNT(h)$.
\end{proof}

\begin{proof}[Proof of the Proposition]
Choose a Borel subgroup $B$ containing $H$
and let $D_b=(X^*(H),\Pi,X_*(H),\ch\Pi)$ 
be the based root datum defined by $(B,H)$.
Let $B^{op}$ be the opposite Borel, 
with corresponding root datum
$D_b^{op}=(X^*(H),-\Pi,X_*(H),-\ch\Pi)$.

Choose a pinning
$\caP=(B,H,\{X_\alpha\})$.
Let $\mathcal P^{op}=(H,B^{op},\{X_{-\alpha}|\alpha\in\Pi\})$ where, 
for each simple root $\alpha\in\Pi$,  the root vector
$X_{-\alpha}\in\g_{-\alpha}$ is determined by the requirement 
$\alpha([X_\alpha,X_{-\alpha}])=2$.

Let $\phi:D_b\rightarrow D'_b$ be the isomorphism of based
root data given by $-1$ on $X^*(H)$. By Theorem \ref{t:auts} there
is an automorphism $C_{\caP}$ of $G$ taking $\caP$ to $\caP^{op}$ and
inducing $\phi$. In particular $C_{\caP}(h)=h\inv$ for $h\in H$.
This implies $C_{\caP}(\g_\alpha)=\g_{-\alpha}$, and since
$C_{\caP}\colon \caP\rightarrow \caP^{op}$ we have $C_{\caP}(X_\alpha)=X_{-\alpha}$. 
Since $C_{\caP}^2$ is an automorphism of $G$ taking $\caP$  to
itself and inducing the
trivial automorphism of $D_b$, the theorem implies that xs$C_{\caP}^2=1$. 

If $g\in G$ is any semisimple element, choose $x$ so that $xgx\inv\in H$. 
Then $C(g)= (C(x\inv)x)g\inv(C(x\inv) x)\inv$.

Suppose $C_1(h)=h\inv$ for all $h\in H$. Then 
$C_1\circ C_\caP$ acts trivially on $H$, so by the lemma 
$C_1=\iNT(h_1)\circ C_\caP$ for some $h_1\in H$, which implies $C_1^2=1$.

For the final assertion choose $g\in G$ so that $gH_1g\inv=H_2$. 
Then $\iNT(g)\circ C_1\circ \iNT(g\inv)$ acts by inversion on $H_2$. 
By the lemma 
$\iNT(g)\circ C_1\circ \iNT(g\inv)=\iNT(h_2)\circ C_2$ for some $h_2\in
H_2$. Choose $t\in H_2$ so that $t^2=h_2$.  Then $\iNT(t\inv g)\circ C_1\circ
\iNT(t\inv g)\inv=C_2$. 
\end{proof}

An involution satisfying the condition of the proposition  is known as a
{\em Chevalley involution}.
For $\caP$ a pinning we refer to the involution $C_\caP$ of the
proof as the {\em Chevalley involution defined by $\caP$}.
The proof shows that every Chevalley involution is equal to
$C_\caP$ for some $\caP$, and all Chevalley involutions are
conjugate. We will abuse terminology slightly and refer to {\em the} Chevalley
involution. 

\begin{remark}
\hfil
\begin{enumerate}
\item The Chevalley involution satisfies: $C(g)$ is conjugate to
$g\inv$ for {\it all} $g\in G$ \cite{lusztig_remarks}*{Proposition 2.6}.
(Lusztig proves the corresponding statement over algebraically closed
fields of arbitrary characteristic.) 

\item If $G=GL(n)$,
$C(g)=\negthinspace\negthinspace\negthinspace\phantom{a}^tg\inv$ is a
Chevalley involution.
The group of fixed points is $G^C=O(n,\C)$, the complexified maximal
compact subgroup 
of $GL(n,\R)$. In other words, $C$ is the Cartan involution for
$GL(n,\R)$.  In general the Chevalley involution is the
Cartan involution of the split real form of $G$.

\item Suppose $C'$ is any automorphism such that
$$\text{$C'(g)$ is $G$-conjugate to $g\inv$ for all semisimple $g$ in 
  $G$.} \eqno{(*)}$$
It is not hard to see, using Lemma \ref{l:inth}, that 
$C'=\iNT(x)\circ C$ for some $x\in G$ and
some Chevalley involution $C$. This means that the requirement $(*)$ characterizes (via the canonical map 
  $\Aut(G)\rightarrow\Out(G)$) the class of the Chevalley involutions
  in $\Out(G)$. 
\item The Chevalley involution is inner if and only if $G$ is semisimple and
$-1$ is in the Weyl group, in which case 
$C=\iNT(g_0)$ where $g_0\in\Norm_G(H)$ represents $-1$.
The proposition implies $g_0^2$ is central, and 
independent of all choices. See Lemma \ref{l:titslemma}.
\end{enumerate}

\end{remark}

\begin{lemma}
\label{l:commutes}
Fix a pinning $\caP$. Then $C_\caP$ commutes with every
$\caP$-dis\-tin\-guished automorphism.
\end{lemma}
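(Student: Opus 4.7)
The plan is to show that for any $\tau \in \Aut(\caP)$, the automorphism $\sigma := \tau\inv \circ C_\caP \circ \tau$ coincides with $C_\caP$, by invoking the uniqueness clause of Theorem \ref{t:auts}. The idea is to verify that $\sigma$ acts on $H$ by inversion and carries the pinning $\caP$ to $\caP^{op}$; then Theorem \ref{t:auts} (applied to the isomorphism $-1:D_b \to D_b^{op}$) forces $\sigma = C_\caP$.

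First I would check that $\sigma$ inverts $H$. Since $\tau$ preserves $\caP$ it preserves $H$, so for $h \in H$ we have $\sigma(h) = \tau\inv(C_\caP(\tau(h))) = \tau\inv(\tau(h)\inv) = h\inv$. Alternatively, one can argue at this point using Lemma \ref{l:inth}: $\sigma \circ C_\caP\inv$ fixes $H$ pointwise, hence equals $\int(h)$ for some $h \in H$, reducing the problem to showing $h = 1$.

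The main step is to verify that $\tau$ preserves the opposite pinning $\caP^{op} = (B^{op},H,\{X_{-\alpha}\})$. Certainly $\tau$ preserves $B^{op}$, since $B^{op}$ is the unique Borel subgroup containing $H$ with trivial intersection with $B/H$-positive directions, and $\tau$ preserves $B$ and $H$. For the root vectors, $\tau$ permutes the simple roots by some diagram automorphism $\alpha \mapsto \tau\cdot\alpha$ and satisfies $d\tau(X_\alpha) = X_{\tau\cdot\alpha}$. Applying $d\tau$ to the defining relation $[X_\alpha, X_{-\alpha}] = \ch\alpha$ yields $[X_{\tau\cdot\alpha}, d\tau(X_{-\alpha})] = \ch{(\tau\cdot\alpha)}$, and since the normalization $[X_\beta, X_{-\beta}] = \ch\beta$ uniquely determines $X_{-\beta}$ inside $\g_{-\beta}$, we conclude $d\tau(X_{-\alpha}) = X_{-(\tau\cdot\alpha)}$. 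Hence $\tau$ preserves $\caP^{op}$.

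Putting these together, $\sigma$ sends $\caP$ through $\tau(\caP) = \caP$, then $C_\caP(\caP) = \caP^{op}$, then $\tau\inv(\caP^{op}) = \caP^{op}$; so $\sigma$ carries $\caP$ to $\caP^{op}$ and induces $-1$ on $X^*(H)$. By the uniqueness clause in Theorem \ref{t:auts}, $\sigma$ equals $C_\caP$, which gives $C_\caP \circ \tau = \tau \circ C_\caP$. I expect the main subtlety to be the pinning-preservation argument in the third paragraph, since it requires using the specific normalization $[X_\alpha, X_{-\alpha}] = \ch\alpha$ — everything else is formal manipulation.
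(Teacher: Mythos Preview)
Your proof is correct. The paper itself offers no argument beyond the words ``This is immediate''; your proposal is exactly the natural unwinding of that immediacy via the uniqueness clause of Theorem~\ref{t:auts}, and the only nontrivial check --- that $\tau$ preserves $\caP^{op}$ because the normalization $[X_\alpha,X_{-\alpha}]=\ch\alpha$ pins down $X_{-\alpha}$ --- is handled correctly.
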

This is immediate from the uniqueness statement in Theorem \ref{t:auts}.

Here is a similar involution of $W_\R$.
Recall $W_\R=\langle \C^*,j\rangle$ with relations $jzj\inv=\overline
z$ and $j^2=-1$. 

\begin{lemma}
\label{l:tau}
There is an involution $C_{W_\R}$ of $W_\R$ such that $C_{W_\R}(z)=z\inv$ for
all $z\in\C^*$.
Any two such automorphisms
are conjugate by $\iNT(z)$ for some 
$z\in\C^*$.
\end{lemma}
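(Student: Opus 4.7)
The plan is to prove both existence and uniqueness by working directly with the presentation $W_\R=\langle\C^*,j\rangle$ with $jzj^{-1}=\bar z$ and $j^2=-1$.

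For existence, I would define $\tau$ on the generators by $\tau(z)=z^{-1}$ for $z\in\C^*$ and $\tau(j)=j$, and check that the two relations are respected. For the commutation relation: $\tau(j)\tau(z)\tau(j)^{-1}=jz^{-1}j^{-1}=\overline{z^{-1}}=\bar z^{-1}=\tau(\bar z)$. For the order relation: $\tau(j)^2=j^2=-1=\tau(-1)$. So $\tau$ is a well-defined automorphism, and it is visibly an involution since it inverts the generators of $\C^*$ and fixes $j$.

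For uniqueness up to $\C^*$-conjugation, suppose $\tau'$ is any involution of $W_\R$ with $\tau'(z)=z^{-1}$ on $\C^*$. Since $W_\R=\C^*\sqcup \C^*j$, write $\tau'(j)=w$. Applying $\tau'$ to the relation $jzj^{-1}=\bar z$ forces $wz^{-1}w^{-1}=\bar z^{-1}$, i.e.\ $wzw^{-1}=\bar z$; this rules out $w\in\C^*$ and forces $w=aj$ for some $a\in\C^*$. Applying $\tau'$ to $j^2=-1$ gives $w^2=-1$, which together with $w^2=aj\cdot aj=a\bar a\,j^2=-|a|^2$ forces $|a|=1$.

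It remains to exhibit $z_0\in\C^*$ with $\int(z_0)\circ\tau\circ\int(z_0)^{-1}=\tau'$. Conjugation by $z_0$ clearly preserves the rule $z\mapsto z^{-1}$ on $\C^*$, so only the value at $j$ needs to be matched. A direct calculation using $jz_0=\bar z_0 j$ gives $\int(z_0)\circ\tau\circ\int(z_0)^{-1}(j)=(z_0/\bar z_0)^2\,j$, so one needs $(z_0/\bar z_0)^2=a$. Since $|a|=1$ we may take $z_0=e^{i\theta/4}$ where $a=e^{i\theta}$, completing the proof. The main (minor) obstacle is simply keeping the commutation with $j$ straight in this last computation; everything else is formal manipulation of the presentation.
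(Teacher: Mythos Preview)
Your proof is correct and follows essentially the same route as the paper's: both parametrize the automorphisms restricting to inversion on $\C^*$ by $\tau'(j)=aj$ with $|a|=1$, and then conjugate by $z_0\in\C^*$ satisfying $(z_0/\bar z_0)^2=a$. One small remark: you phrase the uniqueness step for ``any involution $\tau'$,'' whereas the lemma asks about ``any two such automorphisms''; but since your argument never actually invokes $\tau'^2=1$ (you derive $|a|=1$ from $j^2=-1$ alone), it already proves the stronger statement.
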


\begin{proof}
This is elementary. 
For $z_0\in\C^*$ define  $C_{W,z_0}(z)=z\inv$ $(z\in\C^*$) and $C_{W,z_0}(j)=z_0j$.
From the relations  this extends to  an automorphism of $W_\R$ if and only if
$z_0\overline z_0=1$. Thus $C_{W,1}$ is an automorphism,
and 
$C_{W,z_0}=\iNT(u)\circ C_{W,1}\circ\iNT(u\inv)$,
provided
$(u/\overline u)^2=z_0$. 
\end{proof}

\sec{Tori}
\label{s:tori}

\begin{subequations}\label{se:cover}
Let $H$ be a complex torus, and fix an element 
$\gamma\in \frac12X^*(H)$. 
Let 
\begin{equation}
\label{e:Hgamma}
H_\gamma=\{(h,z)\in H\times\C^*\,|\, [2\gamma](h)=z^2\}.
\end{equation}
This is a two-fold cover of $H$ via the map
$(h,z)\rightarrow h$; write $\zeta$ for the nontrivial element in the
kernel of this map. 
We call this the {\em $\gamma$-cover} of $H$. A character of
$H_\gamma$ is called {\em genuine} if it takes the value $-1$ on $\zeta$.
Note that  
\begin{equation}
\gamma\colon H_\gamma \rightarrow \C^*, \quad (h,z)\mapsto z
\end{equation}
is a genuine character of $H_\gamma$, and is a canonical square root of the
algebraic character $2\gamma$ of $H$. The genuine algebraic characters
of $H_\gamma$ may be identified with symbols $\gamma+X^*(H)$:
\begin{equation}
\gamma + \kappa_0 \colon H_\gamma \rightarrow \C^*, \quad (h,z)\mapsto
\kappa_0(h)z \qquad (\kappa_0 \in X^*(H)).
\end{equation}

Now assume $H$ is defined over $\R$, with Cartan involution $\theta$. 
The {\em $\gamma$-cover of $H(\R)$} is defined to be the inverse image
of $H(\R)$ in 
$H_\gamma$. 

\end{subequations} 
\begin{lemma}[\cite{av1}*{Proposition 5.8}]
\label{l:lambdakappa}
Given $\gamma\in \frac12 X^*(H)$, the genuine characters of $H(\R)_\gamma$ are canonically parametrized
by the set of pairs $(\lambda,\kappa)$ with 
$$\lambda\in \Hom(\h,\C), \qquad
\kappa\in\gamma+X^*(H)/(1-\theta)X^*(H),$$
subject to the requirement $(1+\theta)\lambda=(1+\theta)\kappa$.
\end{lemma}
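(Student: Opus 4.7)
The plan is to reduce to the ungraded case by exploiting the canonical genuine character $\gamma$ on $H_\gamma$. Since $(h,z)\mapsto z$ is a genuine character of $H(\R)_\gamma$, any genuine character $\chi$ can be written as $\chi = \gamma\cdot \chi'$, where $\chi' := \chi/\gamma$ is trivial on $\zeta$ and hence factors through a character of $H(\R)$. Multiplication by $\gamma$ thus gives a bijection between genuine characters of $H(\R)_\gamma$ and ordinary continuous characters of $H(\R)$, so it suffices to parametrize the latter and then translate by $\gamma$.

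Next I would establish the corresponding parametrization for $H(\R)$ itself: characters of $H(\R)$ are in canonical bijection with pairs $(\lambda', \mu)$, where $\lambda' \in \h^*$ is the differential, $\mu \in X^*(H)/(1-\theta)X^*(H)$ is an algebraic weight modulo those which restrict trivially to $H(\R)$, and the compatibility $(1+\theta)\lambda' = (1+\theta)\mu$ holds. Concretely, one decomposes $H(\R)^0 = T\cdot A$ into its compact and split parts (with $T$ the $\theta$-fixed identity component and $A$ a vector group corresponding to the $(-\theta)$-eigenspace), checks that $(1-\theta)X^*(H)$ is exactly the sublattice of algebraic characters restricting trivially to $H(\R)$, and verifies that the differential $\lambda'$ must agree with the weight $\mu$ on the compact directions, which is precisely the content of $(1+\theta)\lambda' = (1+\theta)\mu$. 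This handles the case $\gamma=0$.

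Finally, I would translate via the twist by $\gamma$. Writing $\chi = \gamma \cdot \chi'$ with $\chi' \leftrightarrow (\lambda',\mu)$, note that the canonical character $\gamma$ of $H(\R)_\gamma$ has differential $\gamma \in \frac12 X^*(H)\subset \h^*$, so $\chi$ has differential $\lambda = \lambda' + \gamma$. The lattice parameter likewise shifts to $\kappa = \mu + \gamma$, well-defined modulo $(1-\theta)X^*(H)$ and lying in the coset $\gamma + X^*(H)$. Adding $\gamma$ to both sides of the compatibility for $(\lambda',\mu)$ gives exactly $(1+\theta)\lambda = (1+\theta)\kappa$, and conversely any such pair $(\lambda,\kappa)$ unwinds to a valid $(\lambda',\mu)$ by subtracting $\gamma$.

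The main obstacle is the second step: the parametrization of $\mathrm{Hom}(H(\R),\C^\times)$ in terms of $\h^*$ and $X^*(H)/(1-\theta)X^*(H)$ with the stated $(1+\theta)$-compatibility. This is the structural input, and reconciling the identification of $\h^*$ with $X^*(H)\otimes\C$, the choice of Cartan involution on the character lattice, and the signs in the compatibility condition is where the careful calculation lives. Once that identification is in hand, the shift by $\gamma$ handed us by the cover is a purely formal translation, and the genuine case follows immediately.
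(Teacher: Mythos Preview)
The paper does not prove this lemma; it simply cites \cite{av1}, Proposition 5.8, and records the interpretation that $\chi(\lambda,\kappa)$ has differential $\lambda$ and restriction to the maximal compact subgroup given by $\kappa$ (viewed as a genuine algebraic character of $H_\gamma$). So there is no argument in the paper to compare against.

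That said, your proposal is correct and is exactly the natural route to the result. The reduction via the canonical square root $(h,z)\mapsto z$ is the point of introducing $H_\gamma$ in the first place, and once you have it the problem is indeed the $\gamma=0$ case: parametrizing continuous characters of $H(\R)$ by a differential and a lattice datum modulo $(1-\theta)X^*(H)$, subject to compatibility on the compact directions. Your identification of $(1-\theta)X^*(H)$ as the algebraic characters trivial on the maximal compact part of $H(\R)$ is the crux, and your translation by $\gamma$ in both the differential and the lattice parameter is what produces the coset $\gamma+X^*(H)/(1-\theta)X^*(H)$ and preserves the $(1+\theta)$-compatibility (using $(1+\theta)(1-\theta)=0$ so the quotient is harmless). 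This matches the description the paper gives immediately after the statement: $\lambda$ is the differential, and $\kappa$ records the restriction to the maximal compact subgroup as a genuine character of $H_\gamma$.
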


Write $\chi(\lambda,\kappa)$ for the character defined by
$(\lambda,\kappa)$. This character has differential $\lambda$, and its
restriction to the maximal compact subgroup is the restriction of the
character $\kappa$ of $H_\gamma$. A little more precisely, fix
$\kappa_0\in X^*(H)$ so that $\gamma+\kappa_0$ is a representative of
the $\kappa$. Then the restriction of $\chi(\lambda,\kappa)$ to the
maximal compact subgroup of $H(\R)_\gamma$ is the restriction of the
algebraic character $\gamma+\kappa_0$ defined in \eqref{se:cover}.

Let $\ch H$ be the dual torus. This satisfies: $X^*(\ch H)=X_*(H)$, $X_*(\ch
H)=X^*(H)$. If $H$ is defined over $\R$, with Cartan involution
$\theta$, then $\theta$ may be viewed as an involution of $X_*(H)$;  its adjoint
$\theta^t$ is an involution of $X^*(H)=X_*(\ch H)$. Let
$\ch\theta$ be the automorphism of $\ch H$ induced by $-\theta^t$.

The {\em L-group of $H$} is defined as $\LH=\langle \ch H,\ch\delta\rangle$
where $\ch\delta^2=1$ and $\ch\delta$ acts on $\ch H$ by $\ch
\theta$. Part of the data is  the  distinguished element $\ch\delta$
(more precisely its conjugacy class).

More generally an {\em E-group for $H$} is 
a group $\EH=\langle \ch H,\ch\delta\rangle$, where $\ch\delta$ acts on
$\ch H$ by $\ch\theta$, and $\ch\delta^2\in\ch H^{\ch\theta}$. 
Such a group is determined 
 up to isomorphism by the image of $\ch\delta^2$ in 
$\ch H^{\ch\theta}/\{h\ch\theta(h)\,|\,h\in \ch H\}$.  Again the data
includes the $\ch H$ conjugacy class of $\ch\delta$.    See
\cite{av1}*{Definition 5.9}. 

A homomorphism $\phi\colon W_\R\rightarrow \EH$ is said to
be {\em admissible} if it is continuous and $\phi(j)\in \LH\backslash
\ch H$. Conjugacy classes of admissible homomorphisms parametrize
genuine representations of 
$H(\R)_\gamma$.

\begin{lemma}[\cite{av1}*{Theorem 5.11}]
\label{l:egroup}
In the setting of Lemma 
\ref{l:lambdakappa}, suppose
$(1-\theta)\gamma\in X^*(H)$.
View $\gamma$ as an element of $\frac12 X_*(\ch H)$.
Let $\EH=\langle \ch H,\ch\delta\rangle$ where $\ch\delta$ acts on
$\ch H$ by $\ch\theta$, and  $\ch\delta^2=\exp(2\pi i\gamma)\in \ch H^{\ch\theta}$.

There is a canonical bijection between the irreducible genuine
characters of $H(\R)_\gamma$ and $\ch H$-conjugacy classes of
admissible homomorphisms $\phi\colon W_\R\rightarrow \EH$. 
\end{lemma}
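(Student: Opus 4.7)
The plan is to parametrize admissible homomorphisms $\phi:W_\R\to \EH$ directly, using the presentation $W_\R=\langle \C^*,j\mid jzj\inv=\ol z,\, j^2=-1\rangle$, and to match that parametrization with the pairs $(\lambda,\kappa)$ supplied by Lemma \ref{l:lambdakappa}. Since $\C^*$ is connected and $\phi$ is admissible, $\phi|_{\C^*}$ lands in $\ch H$ and has the form $z\mapsto z^\mu\ol z^\nu$ for $\mu,\nu\in X_*(\ch H)\otimes\C=\h^*$ with $\mu-\nu\in X_*(\ch H)$. The conjugation relation $\phi(j)\phi(z)\phi(j)\inv=\phi(\ol z)$, together with the fact that $\phi(j)$ acts on $\ch H$ by $\ch\theta$, forces $\nu=-\theta^t\mu$. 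Thus $\phi|_{\C^*}$ is encoded by a single element $\lambda:=\mu\in\h^*$ satisfying $(1+\theta^t)\lambda\in X_*(\ch H)$, which is exactly the constraint on the differential $\lambda$ in Lemma \ref{l:lambdakappa}.

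Next I would write $\phi(j)=y\ch\delta$ for some $y\in \ch H$ and extract the second piece of data from the relation $\phi(j)^2=\phi(-1)$. This becomes
$$
y\,\ch\theta(y)\,\exp(2\pi i\gamma)=\exp\!\bigl(i\pi(1-\theta^t)\lambda\bigr),
$$
using $\ch\delta^2=\exp(2\pi i\gamma)$. The solution set for $y$ modulo the $\ch H$-conjugation action on $\phi(j)$ (which translates $y$ by $(1-\ch\theta)\ch H$) is a torsor for a finite group canonically identified with $\gamma+X^*(H)/(1-\theta)X^*(H)$; the $\gamma$-shift is forced by the equation $\ch\delta^2=\exp(2\pi i\gamma)$. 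This produces the invariant $\kappa$, and the compatibility $(1+\theta)\lambda=(1+\theta)\kappa$ is exactly the condition needed for the displayed equation above to be solvable.

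After that I would verify that $\ch H$-conjugacy on $\phi$ acts trivially on $\lambda$ (since $\ch H$ is abelian and fixes $\phi|_{\C^*}$) and by translation by $(1-\theta)X^*(H)$ on $\kappa$, matching the quotient in Lemma \ref{l:lambdakappa}, and check that the resulting assignment $\phi\mapsto\chi(\lambda,\kappa)$ is a bijection by producing an explicit inverse from any pair $(\lambda,\kappa)$.

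The main obstacle is the careful bookkeeping at $\phi(j)$: showing that the hypothesis $(1-\theta)\gamma\in X^*(H)$ is exactly what makes $\exp(2\pi i\gamma)\in\ch H^{\ch\theta}$, so that the E-group is well-defined, and that the \emph{same} condition ensures the $\kappa$-coset lies in $\gamma+X^*(H)/(1-\theta)X^*(H)$ with the right torsor structure. In particular one must check that shifting $\gamma$ by an element of $X^*(H)$ produces an isomorphic E-group and simultaneously shifts $\kappa$ in a compatible way, so the bijection descends to the statement as written. Since the result is \cite[Theorem 5.11]{av1}, a streamlined proof amounts to citing that theorem after verifying the translation of notations.
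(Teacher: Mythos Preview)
Your approach is essentially the paper's own sketch: write $\phi(z)=z^\lambda\ol z^{\ch\theta\lambda}$ and $\phi(j)=\exp(2\pi i\mu)\ch\delta$, impose $\phi(j)^2=\phi(-1)$, and read off $\kappa=\tfrac12(1-\ch\theta)\lambda-(1+\ch\theta)\mu\in\gamma+X^*(H)$ to land in Lemma~\ref{l:lambdakappa}. One slip and one clarification: since $\mu-\nu=(1+\theta^t)\lambda$, the right side of your displayed equation should be $\exp\bigl(i\pi(1+\theta^t)\lambda\bigr)$ rather than $(1-\theta^t)$; and with the explicit formula above one checks that $\kappa$ is already invariant under $\ch H$-conjugation (because $(1+\ch\theta)(1-\ch\theta)=0$), so the $(1-\theta)X^*(H)$ indeterminacy in Lemma~\ref{l:lambdakappa} enters through the choice of logarithm $\mu$ of $y$, not through conjugacy as you suggest.
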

If $\EH$ is the L-group of $H$ then (by restricting to $H(\R)\subset
H(\R)_\gamma$) we can replace genuine characters of $H(\R)_\gamma$
with characters of $H(\R)$.

\begin{proof}[Sketch of proof]
An admissible homomorphism $\phi$ may be written in the form
\begin{equation}
\label{e:phi1}
\begin{aligned}
\phi(z)&=z^\lambda\overline z^{\ch\theta(\lambda)}\\
\phi(j)&=\exp(2\pi i\mu)\ch\delta
\end{aligned}
\end{equation}
for some $\lambda,\mu\in \ch\h$. Then $\phi(j)^2=\exp(2\pi
i(\mu+\ch\theta\mu)+\gamma)$ and $\phi(-1)=\exp(\pi
i(\lambda-\ch\theta\lambda))$, so $\phi(j^2)=\phi(j)^2$ if and
only if
\begin{equation}
\label{e:kappacondition}
\kappa:=\frac12(1-\ch\theta)\lambda-(1+\ch\theta)\mu\in \gamma+ X_*(\ch H)=\gamma+X^*(H).
\end{equation}
In this case $(1+\theta)\lambda=(1+\theta)\kappa$; take $\phi$ to $\chi(\lambda,\kappa)$. 
\end{proof}

Write $\chi(\phi)$ for the genuine character of $H(\R)_\gamma$ associated to $\phi$. 

\bigskip

The Chevalley involution $C$ of $\ch H$ (i.e., inversion) extends to
an involution of 
$\E H=\langle \ch H,\ch\delta\rangle$, fixing $\ch\delta$ (this uses the 
fact that $\exp(2\pi i(2\gamma))=1$). 

Here is the main result in the case of (covers of) tori.

\begin{lemma}
\label{l:Ctorus}
Suppose $\phi\colon W_\R\rightarrow\EH$ is an admissible homomorphism, with
corresponding genuine character $\chi(\phi)$ of $H(\R)_\gamma$. 
Then
\begin{equation}
\chi(C\circ\phi)=\chi(\phi)^*
\end{equation}
\end{lemma}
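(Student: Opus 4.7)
The plan is to verify the lemma by a direct computation, using the explicit parametrization of genuine characters of $H(\R)_\gamma$ by pairs $(\lambda,\kappa)$ from Lemma \ref{l:lambdakappa} together with the explicit form of admissible homomorphisms from the proof of Lemma \ref{l:egroup}.

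First I would observe that $H(\R)_\gamma$ is abelian (it is a subgroup of the abelian group $H\times\C^*$), so the contragredient of a character is just its inverse. Hence if $\chi(\phi)=\chi(\lambda,\kappa)$, then $\chi(\phi)^*=\chi(-\lambda,-\kappa)$, since negation on $(\lambda,\kappa)$ preserves the compatibility $(1+\theta)\lambda=(1+\theta)\kappa$ and negates both the differential and the restriction to the compact part.

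Next I would compute $C\circ\phi$ explicitly. Writing $\phi$ in the standard form
\begin{equation*}
\phi(z)=z^\lambda\overline z^{\ch\theta\lambda},\qquad \phi(j)=\exp(2\pi i\mu)\ch\delta,
\end{equation*}
and using that $C$ acts by inversion on $\ch H$ and fixes $\ch\delta$, one reads off
\begin{equation*}
(C\circ\phi)(z)=z^{-\lambda}\overline z^{-\ch\theta\lambda},\qquad (C\circ\phi)(j)=\exp(-2\pi i\mu)\ch\delta.
\end{equation*}
In particular $C\circ\phi$ is again admissible (its image of $j$ still lies in $\EH\setminus\ch H$), so Lemma \ref{l:egroup} applies and $C\circ\phi$ corresponds to the pair $(\lambda',\kappa')$ with $\lambda'=-\lambda$ and, by the formula \eqref{e:kappa},
\begin{equation*}
\kappa'=\tfrac12(1-\ch\theta)(-\lambda)-(1+\ch\theta)(-\mu)=-\kappa.
\end{equation*}

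Therefore $\chi(C\circ\phi)=\chi(-\lambda,-\kappa)=\chi(\lambda,\kappa)^*=\chi(\phi)^*$, as desired. There is no real obstacle here; the only thing to check carefully is that the formula \eqref{e:kappa} for $\kappa$ is linear in $(\lambda,\mu)$ so that negation passes through, and that $C$ is a well-defined involution of $\EH$ (which was noted just before the lemma, using $\exp(2\pi i(2\gamma))=1$).
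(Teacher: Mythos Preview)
Your proof is correct and follows essentially the same approach as the paper: compute $C\circ\phi$ explicitly using the standard form of $\phi$, read off that the associated pair is $(-\lambda,-\kappa)$ via formula \eqref{e:kappa}, and identify $\chi(-\lambda,-\kappa)$ with $\chi(\lambda,\kappa)^*$. The paper's argument is slightly terser but the content is identical.
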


\begin{proof}
Suppose $\phi$ is given by \eqref{e:phi1},
so $\chi(\phi)=\chi(\lambda,\kappa)$ with $\kappa$ as in
\eqref{e:kappacondition}. Then
\begin{equation}
\begin{aligned}
(C\circ\phi)(z)&=z^{-\lambda}\overline z^{\,-\ch\theta(\lambda)}\\
(C\circ\phi)(j)&=\exp(-2\pi i\mu)\ch\delta
\end{aligned}
\end{equation}
By \eqref{e:kappacondition}
$\chi(C\circ\phi)=\chi(-\lambda,-\kappa)=\chi(\lambda,\kappa)^*$.  
\end{proof}

\sec{L-packets without L-groups}
\label{s:lpackets}

Suppose $G$ is defined over $\R$, with real points $G(\R)$.  This
means that $G(\R)=G(\C)^\sigma$ where $\sigma$ is an antiholomorphic
involution. ({\em Antiholomorphic} means that if $f$ is a locally defined
holomorphic function on $G({\mathbb C})$, then 
$g\mapsto \overline{f(\sigma(g))}$ is also holomorphic.)
  Fix a {\em Cartan involution} $\theta$ of $G$ corresponding to
  $G(\R)$, and let $K=G^\theta$.  This means that $\theta$ and
  $\sigma$ commute, and $K(\R)=K\cap G(\R)=K^\sigma=G(\R)^\theta$ is a
maximal compact subgroup of $G(\R)$, with complexification $K$.  We
have the following picture, where each arrow represents taking the
fixed points with respect to the given involution.

$$
\xymatrix{
&G\ar[rd]^\sigma\ar[ld]_\theta\\
G^\theta=K\ar[rd]_\sigma&&G(\R)=G^\sigma\ar[ld]^\theta\\
&K(\R)
}
$$
We say that {\em $\theta$ corresponds to $\sigma$}, and vice-versa.

We work entirely in the algebraic setting. We consider
$(\g,K)$-modules, and write $(\pi,V)$ for a $(\g,K)$-module with
underlying complex vector space $V$. The set of equivalence classes of
irreducible $(\g,K)$-modules is a disjoint union of L-packets. In this
section we describe L-packets in terms of data for $G$ itself.  For
the relation with L-parameters see Section \ref{s:lparameters}. 

Suppose $H$ is a $\theta$-stable Cartan subgroup of $G$.
After conjugating  by $K$  we may assume it is defined over $\R$,
which we always do without further comment.

The {\em imaginary roots} $\Delta_i$, i.e., those fixed by $\theta$,
form a root system. 
Let $\rho_i$ be one-half the sum 
of a set $\Delta^+_i$ of positive imaginary roots. 
The two-fold cover $H_{\rho_i}$ of $H$ is defined as in Section
\ref{s:tori}. It is convenient to eliminate the dependence on
$\Delta^+_i$:
define $\wt H$
to be the inverse limit of $\{H_{\rho_i}\}$ over all choices of
$\Delta^+_i$. 
The inverse image of $H(\R)$ in
$H_{\rho_i}$ is denoted $H(\R)_{\rho_i}$, and take the inverse limit 
to define $\wt{H(\R)}$. (The existence and uniqueness of the maps
defining the inverse limit follows from standard facts about systems
of positive roots.)

\begin{definition}
An {\em L-datum} is a pair $(H,\chi)$ where $H$ is a  $\theta$-stable
Cartan subgroup  of $G$, 
$\chi$ is a genuine character of 
$\wt{H(\R)}$, and $\langle d\chi,\ch\alpha\rangle\ne0$
for all imaginary roots.  (The Cartan subgroup is included in this
notation for convenience, but it is implicit in the
character $\chi$; so we may speak of $\chi$ as an L-datum.
\end{definition}

Associated to each L-datum is an L-packet. 
We start by defining relative discrete series  L-packets.

We say $H(\R)$ is {\em relatively compact} if $H(\R)\cap G_d$ is
compact, where $G_d$ is the derived group of $G$. (It is equivalent to
require that all of the roots of $H$ in $G$ be imaginary.)  Then
$G(\R)$ has relative discrete series representations if and only if it
has a relatively compact Cartan subgroup.

Suppose $H(\R)$ is relatively compact. Choose 
a set of positive roots $\Delta^+$ and define the {\em Weyl denominator}
\begin{equation}
D(\Delta^+,h)= e^\rho(h) \prod_{\alpha\in
  \Delta^+}(1-e^{-\alpha}(h))\quad(h\in H(\R)_\rho).
\end{equation}
This is a genuine function, i.e.,
it satisfies $D(\Delta^+,\zeta
h)=-D(\Delta^+,h)$, and we view it as a function on $\wt{H(\R)}$.

Let $q=\frac12\dim(G_d/K\cap G_d)$.
Let $W(K,H)=\Norm_K(H)/H\cap K$; this is isomorphic to the real
Weyl group $W(G(\R),H(\R))=\Norm_{G(\R)}(H(\R))/H(\R)$.

\begin{definition}
\label{d:ds}
Suppose $(H,\chi)$ is an L-datum with  $H(\R)$  relatively
compact. 
Let $\pi=\pi(\chi)$ be the 
unique (up to equivalence) nonzero relative discrete series
representation whose character restricted to the
regular elements of $H(\R)$ is
\begin{equation}
\label{e:dschar}
\Theta_{\pi}(h)=(-1)^qD(\Delta^+,\wt h)\inv
\sum_{w\in W(K,H)}\sgn(w)(w\chi)(\wt h).
\end{equation}
Here $\wt h\in \wt{H(\R)}$ is any inverse image of $h$,
and $\Delta^+$ makes $d\chi$ dominant.
Every relative discrete series representation is obtained
this way, and $\pi(\chi)\simeq\pi(\chi')$ if and only if
$\chi$ and $\chi'$ are $W(K,H)$-conjugate.

The L-packet of $(H,\chi)$ is 
\begin{equation}
\label{e:piggamma}
\Pi_G(\chi)=\{\pi(w\chi)\,|\, w\in W(G,H)/W(K,H)\}.
\end{equation}
\end{definition}

It is a basic result of Harish-Chandra that $\pi(\chi)$ exists and
is characterized (among relative discrete series representations) by
this character formula. This version of the character formula is a
slight variant of the usual one, because of the use of $\wt{H(\R)}$. 
See \cite{av1} or \cite{characters}.

By \eqref{e:dschar} the representations in $\Pi_G(\chi)$ all have
infinitesimal character $d\chi$. 
If $\rho=\rho_i$ is one-half the sum of any choice of positive roots 
(all roots are imaginary), $\chi\otimes e^{\rho}$ factors to $H(\R)$, and
the central character of $\Pi_G(\chi)$ is 
$(\chi\otimes e^{\rho})|_{Z(G(\R))}$. 

Since $2\rho=2\rho_i$ is a sum of roots, $e^{2\rho}$ is trivial on the center $Z$ of $G$,
and there is a canonical splitting of the restriction of $\wt H$ to $Z$:
$z\rightarrow(z,1)\in H_{\rho}\simeq\wt H$.
Using this splitting the central character of the packet is simply
$\chi|_{Z(G(\R))}$. 

We are going to show that (as is well known) $\Pi(\chi)$ is precisely
the set of 
relative discrete series representations with the same infinitesimal
and central characters as $\pi(\chi)$. 
In fact something stronger is true.

Let $\Grad$ be the radical of
$G$. This maximal central torus is
the identity component of the center, and is defined over $\R$. 
By a {\em radical character}
we mean a character of $\Grad(\R)$, 
and the radical character of an irreducible representation is the restriction of its
central character to $\Grad(\R)$.

\begin{proposition}
\label{p:dslpacket}
An L-packet of relative discrete series representations is uniquely
determined by an infinitesimal  and a radical character.
\end{proposition}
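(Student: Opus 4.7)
The plan is to reduce the proposition to a structural claim: for a relatively compact Cartan, $H(\R) = H(\R)^0 \cdot \Grad(\R)$. From Definition~\ref{d:ds} and the discussion following it, the L-packet $\Pi_G(H,\Lambda)$ has infinitesimal character equal to the $W(G,H)$-orbit of $d\Lambda$ and radical character equal to $\Lambda|_{\Grad(\R)}$ (using the canonical splitting of $\wt H$ over the center). So the proposition amounts to the claim that two relatively compact L-data $(H,\Lambda)$ and $(H',\Lambda')$ producing the same infinitesimal and radical characters must be $G$-conjugate as L-data.

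First I would use the conjugacy of relatively compact $\theta$-stable Cartans under $K$ (uniqueness of the fundamental Cartan up to $K$-conjugacy) to reduce to the case $H = H'$. Since the infinitesimal characters agree, $d\Lambda'$ and $d\Lambda$ lie in a single $W(G,H)$-orbit; because $W(G,H)$ acts trivially on the central subgroup $\Grad$, acting by an appropriate element of $W(G,H)$ on $\Lambda'$ preserves its radical character and produces an L-datum in the same L-packet, so I may assume $d\Lambda = d\Lambda'$. The ratio $\chi := \Lambda \cdot (\Lambda')^{-1}$ is then a non-genuine character of $\wt{H(\R)}$, descending to a character of $H(\R)$ with trivial differential and trivial restriction to $\Grad(\R)$. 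Such a character factors through the finite quotient $H(\R)/(H(\R)^0 \cdot \Grad(\R))$, so it suffices to show this quotient is trivial.

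The structural equality $H(\R) = H(\R)^0 \cdot \Grad(\R)$ is the main technical hurdle. The relatively compact hypothesis forces the maximal $\R$-split subtorus of $H \cap G_d$ to be trivial, and since $G = G_d \cdot \Grad$ is an almost direct product (with finite intersection $G_d \cap \Grad$), the maximal $\R$-split subtorus $H_s$ of $H$ must then lie inside $\Grad$. Since the component group of a real torus is generated by its split part, $H(\R) \subset H(\R)^0 \cdot H_s(\R) \subset H(\R)^0 \cdot \Grad(\R)$, finishing the argument.

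The delicate bookkeeping is around the finite overlap $G_d \cap \Grad$, which I expect to be the hardest part to make precise. An alternative route avoids it by working directly with the $(\lambda,\kappa)$-parametrization of Lemma~\ref{l:lambdakappa}: under the relatively compact hypothesis $\theta$ acts trivially on $H \cap G_d$, so $(1-\theta)X^*(H)$ is supported on $X^*(\Grad)$, and the pair of conditions $(1+\theta)\lambda = (1+\theta)\kappa$ together with the prescribed radical restriction leave no freedom in $\kappa$ once $\lambda$ has been fixed.
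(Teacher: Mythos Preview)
Your argument is correct and rests on the same structural fact the paper isolates, namely $H(\R)=H(\R)^0\cdot\Grad(\R)$ for $H(\R)$ relatively compact; this is exactly equation~\eqref{e:HR} of Lemma~\ref{l:center} (since $H_d(\R)$ is compact, hence contained in $H(\R)^0$). The organization differs in two respects. First, the paper does not argue directly with L-data: it invokes the prior fact that a relative discrete series L-packet is determined by its infinitesimal and \emph{central} characters, and then uses only the weaker consequence $Z(G(\R))\subset\Grad(\R)H(\R)^0$ to show the central character is already fixed by the infinitesimal and radical data. You bypass the central character entirely and therefore need the full statement about $H(\R)$ to pin down $\Lambda$ itself. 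Second, the proofs of the structural fact diverge: the paper writes $h\in H(\R)$ as $z\cdot g$ with $z\in\Grad(\C)$, $g\in H_d(\C)$ and proves a sublemma (Lemma~\ref{l:hinvsigmah}) that in a semisimple group with compact Cartan, $g^{-1}\sigma(g)\in Z$ forces $g^{-1}\sigma(g)=1$; you instead observe $H_s\subset\Grad$ (valid because $\sigma=-1$ on $X_*(H_d)\otimes\Q$, so $X_*(H)^\sigma$ lies in the saturated sublattice $X_*(\Grad)$) and that $\pi_0(H(\R))$ is generated by the image of $H_s(\R)$. Both routes are short; yours is a clean torus computation, while the paper's keeps the ambient semisimple group in view. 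Your alternative via the $(\lambda,\kappa)$ parametrization is also sound and arguably the most direct of the three.
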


What appears in the proof is in fact just the ``split radical
character,'' the character of the maximal split torus in the center of
$G$; more precisely, the character on the elements of order two in
that split torus.

The proof will be based on the following structural fact.

\begin{lemma}
\label{l:center}
Suppose $H(\R)$ is a relatively compact Cartan subgroup of $G(\R)$.
Then
\begin{equation}
\label{e:center}
\Grad(\R) \subset Z(G(\R))\subset\Grad(\R)H(\R)^0 \subset H(\R).
\end{equation}
\end{lemma}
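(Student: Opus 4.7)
The plan is to reduce the inclusion to the observation that under the relatively compact hypothesis, the torus $H' := H \cap G_d$ is anisotropic over $\R$, and so the finite subgroup $Z(G_d) \subset H'$ lies automatically in $H'(\R)$.

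First I would note the standard product decomposition $Z(G) = \Grad \cdot Z(G_d)$, immediate from $G = \Grad \cdot G_d$ and the fact that $\Grad$ is central. Thus every $z \in Z(G)$ factors as $z = r f$ with $r \in \Grad$ and $f \in Z(G_d) \subset H'$. The lemma will follow if for $z \in Z(G(\R))$ one can arrange $r \in \Grad(\R)$ and $f \in H(\R)^0$.

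The key step is the claim $Z(G_d) \subset H'(\R)$. Since $H(\R)$ is relatively compact, $H'(\R)$ is a compact connected real torus whose real dimension equals $\dim_\C H'$, and the polar decomposition $H'(\C) \cong H'(\R) \times \R^{\dim H'}$ shows that every torsion element of $H'(\C)$ lies in $H'(\R)$. Applied to the finite group $Z(G_d)$ this gives the claim. Then, given $z \in Z(G(\R))$ and a decomposition $z = r f$ as above, one has $f \in H'(\R) \subset G(\R)$, so $r = z f^{-1} \in \Grad \cap G(\R) = \Grad(\R)$; and $f \in H'(\R) \subset H(\R)^0$ since $H'(\R)$ is connected and contains the identity. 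Hence $z \in \Grad(\R) \cdot H(\R)^0$.

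The point requiring care is that without the relative compactness, one can have a nontrivial $H^1(\Gamma,\Grad \cap Z(G_d))$ obstruction to lifting the decomposition $z = rf$ to real points. The hypothesis of the lemma kills this obstruction directly, by sweeping the whole of $Z(G_d)$ into the compact real form of $H'$; this is the only place where relative compactness is used.
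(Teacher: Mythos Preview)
Your argument is correct. Both your proof and the paper's hinge on the same observation: since $H' = H\cap G_d$ is an anisotropic torus, $H'(\R)$ is the maximal compact subgroup of $H'(\C)$, and therefore contains the finite group $Z(G_d)$. The packaging, however, is different.

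The paper actually proves the stronger decomposition $H(\R)=\Grad(\R)H_d(\R)$ (for all of $H(\R)$, not just the center). It does this by a Galois-cocycle argument: given $y\in H(\R)$, write $y=zh$ with $z\in\Grad(\C)$, $h\in H_d(\C)$; then $h^{-1}\sigma(h)=z\sigma(z)^{-1}\in Z(G_d(\C))$, and a separate sublemma for semisimple groups (using the adjoint quotient) shows this cocycle is trivial. That sublemma in turn bottoms out at the torsion-in-compact-torus fact. Your route is more economical: by restricting attention to $Z(G)$ from the outset and using $Z(G)=\Grad\cdot Z(G_d)$, the second factor is already finite, so the torsion observation applies immediately and no cocycle bookkeeping is needed. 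What you give up is the stronger statement about $H(\R)$ itself, which the paper records but does not use elsewhere.

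One cosmetic point: your polar decomposition should read $H'(\C)\cong H'(\R)\times(\R_{>0})^{\dim H'}$ rather than $\R^{\dim H'}$, but this does not affect the argument.
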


\begin{proof} 
Let $A$ be a maximal split subtorus of $H$ (i.e., $\theta(a)=a\inv$
for all $a\in A$). Since $H(\R)$ is relatively compact, $A$ is
contained in the radical $\Grad$ of $G$. It is well known 
that $H(\R)=A(\R)H(\R)^0$. (Such a statement is true for reductive groups 
\cite{borel_tits}*{Theorem 14.4}, with $A$ a maximal split torus; for
tori it is elementary.)  
Therefore $H(\R)=A(\R)H(\R)^0=\Grad(\R)H(\R)^0$. Since
$Z(G(\R))\subset H(\R)$ this proves the lemma. 
\end{proof}

\begin{proof}[Proof of Proposition \ref{p:dslpacket}]
  Suppose $(H,\chi)$ is an L-datum as in Definition \ref{d:ds}, and
  $\Pi = \Pi_G(\chi)$ the corresponding L-packet of relative discrete
  series representations. Write $\lambda\in\Hom(\h,\C)$ for the
  differential of $\chi$. The infinitesimal character $\chiinf$ of the
  L-packet is has as representatives exactly all the weights
  $w\lambda$, with $w\in W(G,H)$.

Write $\chi=(\lambda,\kappa)$ as in Lemma \ref{l:lambdakappa}. The
L-packet of $\gamma$ by definition consists of the representations
$\pi(w\lambda,w\kappa)$ (with $w\in W(G,H)$). The set of {\em all}
relative discrete series of infinitesimal character $\chiinf$ is equal
to 
$$\{\pi(w\lambda,w\kappa + \kappa_1)\}.$$
Here the modification term $\kappa_1$ is (by Lemma
\ref{l:lambdakappa}) subject to the requirement $(1+\theta)\kappa_1 =
0$; that is,
$$\kappa_1\in X^*(H)^{-\theta}/(1-\theta)X^*(H).$$
The right side here is the group of characters of 
$$H^\theta/H^\theta_0 = A(\R)^\theta/(A(\R)\cap H^\theta_0).$$
We have therefore shown that every relative discrete series
representation $\pi_1$ of infinitesimal character $\lambda$ arises by
{\em changing} the radical character of some $\pi(w\chi)$ by
$\kappa_1$. If the radical character is unchanged---that is, if
$\pi(\chi)$ and $\pi_1$ have the same radical character---then
$\kappa_1$ belongs to $(1-\theta)X^*(H)$, and $\pi_1 =\pi(w\chi)$
belongs to the L-packet of $\pi$.
\end{proof}

The following converse to Proposition \ref{p:dslpacket} follows immediately from 
the definitions.
\begin{lemma}
\label{l:nonzerolpacket}
Assume $G(\R)$ has a relatively compact Cartan subgroup, 
and fix one, denoted $H(\R)$.
Suppose $\chiinf,\chirad$ are infinitesimal and radical characters,
respectively. Choose $\lambda\in\Hom(\h,\C)$ defining $\chiinf$ via the Harish-Chandra
homomorphism.
Then the L-packet of relative discrete series representations defined
by $\chiinf,\chirad$ is nonzero if and only if $\lambda$ is regular, and there is a genuine
character of $\wt{H(\R)}$ satisfying:
\begin{enumerate}
\item[(1)] $d\chi=\lambda$
\item[(2)] $\chi|_{\Grad(\R)}=\chirad$.
\end{enumerate}
The conditions are independent of the choices of $H(\R)$ and $\lambda$.
\end{lemma}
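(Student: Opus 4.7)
The plan is to reduce the statement to the existence of an L-datum $(H,\Lambda)$ with prescribed differential and prescribed restriction to $\Grad(\R)$, and then verify that the two conditions listed are exactly the obstructions to producing such $\Lambda$. By Proposition \ref{p:dslpacket}, an L-packet of relative discrete series is determined by an infinitesimal and a radical character, so the packet attached to $(\chiinf,\chirad)$ is nonzero if and only if \emph{some} relative discrete series L-datum $(H,\Lambda)$ realizes these two characters. Since $G(\R)$ has a relatively compact Cartan subgroup, we may fix one such $H(\R)$ and, up to $K$-conjugacy, work with this $H$ throughout.

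First I would handle the forward direction. If the packet is nonzero, choose a representative $\pi(H,\Lambda')$ in it. By \eqref{e:dschar} the infinitesimal character is $d\Lambda'$, which is conjugate to $\lambda$ under $W(G,H)$; replacing $\Lambda'$ by a $W(G,H)$-translate we may assume $d\Lambda=\lambda$. The fact that $\pi(H,\Lambda)$ is a nonzero relative discrete series representation forces $\lambda$ to be regular, since $(H,\Lambda)$ must be an L-datum and all roots with respect to $H$ are imaginary. The statement about the radical character then comes from the discussion after Definition \ref{d:ds}: via the canonical splitting $z\mapsto(z,1)$ of $\wt H$ over $Z(G)$, the central character of the packet is $\Lambda|_{Z(G(\R))}$, whose restriction to $\Grad(\R)\subset Z(G(\R))$ is $\chirad$; hence $\Lambda|_{\Grad(\R)}=\chirad$.

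For the converse, given $\lambda$ regular and a genuine $\Lambda$ on $\wt{H(\R)}$ satisfying (1) and (2), the pair $(H,\Lambda)$ is an L-datum (regularity on imaginary coroots is automatic since all roots are imaginary), so $\pi(H,\Lambda)$ is defined and nonzero. Its infinitesimal character is $d\Lambda=\lambda$, i.e.\ $\chiinf$. By Lemma \ref{l:center}, the central character of $\pi(H,\Lambda)$ is determined on $\Grad(\R)$ by $\Lambda|_{\Grad(\R)}=\chirad$ and on $H(\R)^0\cap Z(G(\R))$ by $\exp(\lambda+\rho)$; in particular its restriction to $\Grad(\R)$ equals $\chirad$, so $\pi(H,\Lambda)$ lies in the packet attached to $(\chiinf,\chirad)$. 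This packet is therefore nonempty.

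Finally, independence of choices is routine: a different $\lambda$ defining $\chiinf$ differs by an element of $W(G,H)$, and $W(G,H)$ acts on the set of genuine characters $\Lambda$ preserving condition (2) (since $\Grad$ is central, hence fixed by the Weyl group) and permuting those that satisfy (1) for the various Weyl translates of $\lambda$; and any two relatively compact $\theta$-stable Cartan subgroups are $K$-conjugate, which transports one choice of data to another. The main subtlety, and the only point where one must be careful, is the compatibility of the canonical splitting $z\mapsto(z,1)$ of $\wt H$ over $Z(G)$ with the passage from central characters to radical characters; this is exactly what Lemma \ref{l:center} and the discussion preceding it are designed to supply.
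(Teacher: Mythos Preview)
Your argument is correct and is precisely the unpacking of what the paper means by ``follows immediately from the definitions'': the paper gives no further proof beyond that one-line remark, and your forward/converse/independence breakdown is exactly how one verifies it from Definition~\ref{d:ds}, the discussion of central and radical characters following \eqref{e:piggamma}, and Proposition~\ref{p:dslpacket}.
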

In (2) we have used the splitting $Z(G(\R))\rightarrow\wt{H(\R)}$
discussed after \eqref{e:piggamma}. 

\bigskip

We now describe general L-packets.
See \cite{characters}*{Section 13} or \cite{unitaryDual}*{Section 6}.

\begin{definition}
\label{d:lpacket}
Suppose $(H,\chi)$ is an L-datum. 
Let $A$ be the identity component of $\{h\in H\,|\,
\theta(h)=h\inv\}$ and set $M=\Cent_G(A)$.
Let $\mathfrak a=\Lie(A)$. 
Choose a parabolic subgroup $P=MN$ satisfying 
$$
\RE\langle d\chi|_\mathfrak a,\ch\alpha\rangle\ge0\text{ for
  all roots of }\h\text{ in }\Lie(N).
$$
Then $P$ is defined over $\R$,  and 
$H(\R)$ is a relatively compact Cartan subgroup of $M(\R)$.
Let $\Pi_M(\chi)$ be the L-packet of relative discrete series
representations of $M(\R)$ as in Definition \ref{d:ds}.
Define
\begin{equation}
\Pi_G(\gamma)=\bigcup_{\pi\in\Pi_M(\chi)}
\{\text{irreducible quotients of }\Ind_{P(\R)}^{G(\R)}(\pi)\}
\end{equation}
Here we use normalized induction, and  pull $\pi$ back to $P(\R)$ via
the map $P(\R)\rightarrow M(\R)$ as 
usual. 
\end{definition}

By the discussion following Definition \ref{d:ds}, and
basic properties of induction, 
the infinitesimal character of $\Pi_G(\chi)$ is $d\chi$, 
and the central character is $\chi|_{Z(G(\R))}$.

\sec{The Tits Group}
\label{s:tits}
We need a few structural facts provided by the Tits group.

Fix a pinning $\caP=(B,H,\{X_\alpha\})$ (see Section \ref{s:aut}).
For $\alpha\in\Pi$ 
define $X_{-\alpha}\in \g_{-\alpha}$ by 
$[X_\alpha,X_{-\alpha}]=\ch\alpha$ as in Section \ref{s:aut}. 
Define $\sigma_\alpha\in W=\exp(\frac\pi 2(X_\alpha-X_{-\alpha}))\in\Norm_G(H)$.
The image of $\sigma_\alpha$ in $W=W(G,H)$ is the simple reflection $s_\alpha$.
Let $H_2=\{h\in H\,|\, h^2=1\}$.

\begin{definition}
\label{d:tits}
The {\em Tits group defined by $\caP$} is the subgroup $\T$of $G$
generated by $H_2$ and
$\{\sigma_\alpha\,|\,\alpha\in\Pi\}$.   
\end{definition}

\begin{proposition}[\cite{tits_group}]
\label{p:titsrels}
The Tits group $\caT$ has the given generators, and relations:

\begin{enumerate}
\item[(1)] $\sigma_\alpha h\sigma_\alpha\inv=s_\alpha(h)$,
\item[(2)] the braid relations among the $\sigma_\alpha$,
\item[(3)] $\sigma_\alpha^2=\ch\alpha(-1)$.
\end{enumerate}
If $w\in W$ then there is a canonical representative $\sigma_w$ of $w$
in $\caT$ 
defined as follows. Suppose $w=s_{\alpha_1}\dots s_{\alpha_n}$ is  a 
reduced expression with each $\alpha_i\in \Pi$. Then
$\sigma_w=\sigma_{\alpha_1}\dots \sigma_{\alpha_n}$, independent of
the choice of reduced expression.
\end{proposition}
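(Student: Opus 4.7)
The statement has two halves: (i) the three relations (1)--(3) hold and present $\caT$, and (ii) for any reduced expression for $w$, the product $\sigma_{\alpha_1}\cdots\sigma_{\alpha_n}$ is independent of the expression. I would handle (i) first, since (ii) falls out of it essentially for free.

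First I would verify the three relations hold in $\caT$. Relation (1) is immediate: $\sigma_\alpha$ lies in $\Norm_G(H)$, its image in $W$ is the simple reflection $s_\alpha$ (as one sees from $[X_\alpha,X_{-\alpha}]=\ch\alpha$ inside the $\mathfrak{sl}_2$-triple $\{X_\alpha,X_{-\alpha},\ch\alpha\}$), and conjugation by any lift of $s_\alpha$ acts as $s_\alpha$ on the abelian group $H$. Relation (3) is a rank-one calculation: inside the homomorphism $SL(2)\to G$ attached to $\alpha$, the element $\exp(\tfrac{\pi}{2}(X_\alpha-X_{-\alpha}))$ corresponds to $\bigl(\begin{smallmatrix}0&1\\-1&0\end{smallmatrix}\bigr)$, and its square is $-I$, which maps to $\ch\alpha(-1)$. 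Relation (2), the braid relations, is the only substantive step; I would verify $\sigma_\alpha\sigma_\beta\sigma_\alpha\cdots=\sigma_\beta\sigma_\alpha\sigma_\beta\cdots$ (with $m_{\alpha\beta}$ factors each side) by reducing to the rank-two subgroup generated by the $\mathfrak{sl}_2$-triples at $\alpha$ and $\beta$ and its simply connected cover, then performing an explicit matrix computation in each of the four rank-two types $A_1\times A_1$, $A_2$, $B_2$, $G_2$.

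Next I would establish the normal form. Relation (1) lets me push every factor of $H_2$ to the left; relations (2) and (3) then let me reduce any word in the $\sigma_\alpha$ to a product along a reduced expression of some $w\in W$. Thus every element of $\caT$ has the form $h\sigma_{\alpha_1}\cdots\sigma_{\alpha_k}$ with $h\in H_2$. For uniqueness of the decomposition: the projection $\caT\to W$ is surjective because the generators cover the simple reflections, and $\caT\cap H=H_2$ since a product $\sigma_{\alpha_1}\cdots\sigma_{\alpha_k}$ lying in $H$ must have $w=1$, and a further application of (3) confines its $H$-part to $H_2$. Surjectivity of $\caT\to W$ together with $\caT\cap H=H_2$ shows the normal forms are distinct, giving $|\caT|=|H_2|\cdot|W|$ and proving the presentation is complete.

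Finally, (ii) follows from (i) by Matsumoto's theorem: any two reduced expressions for the same $w\in W$ are connected by a sequence of braid moves, and the braid relations (2) then guarantee that the product $\sigma_{\alpha_1}\cdots\sigma_{\alpha_n}$ in $\caT$ does not change. The main obstacle is the rank-two braid verification, especially in type $G_2$, where the twelve-factor identity requires the longest explicit computation; everything else is bookkeeping once that step is in hand.
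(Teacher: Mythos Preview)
The paper does not prove this proposition at all: it is stated with a citation to Tits \cite{tits_group} and used as a black box. So there is no proof in the paper to compare your proposal against.

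That said, your sketch is the standard route and is essentially correct. A couple of small points worth tightening if you want it to stand on its own. First, the normal-form argument should be phrased as a comparison between the \emph{abstract} group presented by (1)--(3) and the concrete subgroup $\caT\subset G$: the relations give a surjection from the abstract group onto $\caT$; the normal form bounds the abstract group's order above by $|H_2|\cdot|W|$; and the exact sequence $1\to H_2\to\caT\to W\to 1$ shows $\caT$ already has that order, forcing an isomorphism. Your paragraph says this, but the logic is slightly tangled (you compute $|\caT|$ rather than bounding the abstract group). Second, the reduction step ``relations (2) and (3) let me reduce any word in the $\sigma_\alpha$'' is really the Tits--Matsumoto solution to the word problem in the associated Coxeter/braid monoid; it is worth naming explicitly, since it is doing real work. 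The rank-two braid checks you flag as the main obstacle are indeed the heart of the matter, and your plan to pass to the simply connected cover of the rank-two subgroup is the right one.
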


\begin{lemma}
\label{l:longfixed}
If $w_0$ is the long element of the Weyl group, then
$\sigma_{w_0}$ is fixed by any $\caP$-distinguished automorphism.
\end{lemma}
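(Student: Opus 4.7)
My plan is to reduce the claim to a statement about how $\tau$ permutes the distinguished simple-root representatives $\sigma_\alpha$, then invoke the well-definedness part of Proposition \ref{p:titsrels}.

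First I would unpack what a $\caP$-distinguished automorphism $\tau$ does. By definition $\tau$ preserves $B$, $H$, and the set $\{X_\alpha \mid \alpha \in \Pi\}$, so it induces a permutation $\alpha \mapsto \tau(\alpha)$ of $\Pi$ with $d\tau(X_\alpha) = X_{\tau(\alpha)}$. The key secondary calculation is to show $d\tau(X_{-\alpha}) = X_{-\tau(\alpha)}$: applying $d\tau$ to $[X_\alpha,X_{-\alpha}] = \ch\alpha$ gives $[X_{\tau(\alpha)}, d\tau(X_{-\alpha})] = \ch{\tau(\alpha)}$, and since $X_{-\tau(\alpha)}$ is the unique element of $\g_{-\tau(\alpha)}$ with this bracket property, the equality follows. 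Consequently
\begin{equation*}
\tau(\sigma_\alpha) = \tau\bigl(\exp(\tfrac{\pi}{2}(X_\alpha - X_{-\alpha}))\bigr) = \exp(\tfrac{\pi}{2}(X_{\tau(\alpha)} - X_{-\tau(\alpha)})) = \sigma_{\tau(\alpha)}.
\end{equation*}

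Next I would observe that $\tau$ acts on $W = \Norm_G(H)/H$ (since $\tau(H) = H$) and that this action on $W$ agrees, on simple reflections, with the permutation $\alpha \mapsto \tau(\alpha)$ of $\Pi$. Thus $\tau$ preserves the Coxeter length function on $W$, and because $w_0$ is characterized as the unique longest element, $\tau(w_0) = w_0$.

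Finally I would fix any reduced expression $w_0 = s_{\alpha_1}\cdots s_{\alpha_n}$. Applying $\tau$ to the definition $\sigma_{w_0} = \sigma_{\alpha_1}\cdots\sigma_{\alpha_n}$ and using the formula from the first step yields
\begin{equation*}
\tau(\sigma_{w_0}) = \sigma_{\tau(\alpha_1)}\cdots\sigma_{\tau(\alpha_n)}.
\end{equation*}
But $s_{\tau(\alpha_1)}\cdots s_{\tau(\alpha_n)}$ is itself a reduced expression for $\tau(w_0) = w_0$, so by the independence-of-reduced-expression assertion in Proposition \ref{p:titsrels} this product equals $\sigma_{w_0}$. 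There is no real obstacle here; the only substantive content is the bracket argument showing $d\tau$ maps $X_{-\alpha}$ to $X_{-\tau(\alpha)}$, everything else is bookkeeping with the canonical lift from the Tits group.
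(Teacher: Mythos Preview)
Your proof is correct and follows essentially the same route as the paper: fix a reduced expression for $w_0$, observe that a $\caP$-distinguished automorphism permutes the $\sigma_\alpha$ according to its action on $\Pi$, and conclude via the independence-of-reduced-expression part of Proposition~\ref{p:titsrels}. The paper's version is terser and does not spell out the bracket argument giving $\tau(\sigma_\alpha)=\sigma_{\tau(\alpha)}$, which you supply explicitly.
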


\begin{proof}
Suppose $w_0=s_{\alpha_1}\dots s_{\alpha_n}$ is a reduced expression.
If $\gamma\in\Aut(\caP)$ it
induces an automorphism of the Dynkin diagram, 
so $s_{\gamma(\alpha_1)}\dots s_{\gamma(\alpha_n)}$ is also a reduced
expression for $w_0$. 
Therefore $\gamma(\sigma_{w_0})=
\gamma(\sigma_{\alpha_1}\dots\sigma_{\alpha_n})=
\sigma_{\gamma(\alpha_1)}\dots\sigma_{\gamma(\alpha_n)}=\sigma_{w_0}$
by the last assertion of Proposition \ref{p:titsrels}.
\end{proof}

Let $\ch\rho$ be one-half the sum of the positive coroots.

\begin{lemma}
\label{l:titslemma}
For any $w\in W$ we have
\begin{equation}
\label{e:titslemma}
\sigma_w\sigma_{w\inv}=\exp(\pi i(\ch\rho-w\ch\rho)).
\end{equation}
In particular if $w_0$ is the long element of the Weyl group,
\begin{equation}
\label{e:w0}
\sigma_{w_0}^2=\exp(2\pi i\ch\rho)\in Z(G).
\end{equation}
This element of $Z(G)$ is independent of the choice of positive roots, and is fixed by 
every automorphism of $G$.
\end{lemma}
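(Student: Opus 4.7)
The plan is to prove the general identity \eqref{e:titslemma} by induction on $\ell(w)$, and then deduce the three supplementary statements about $w_0$ directly.

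For the base case $\ell(w)=0$, both sides are $1$. For the inductive step, write $w = s_\alpha w'$ with $\alpha \in \Pi$ and $\ell(w') = \ell(w)-1$. Given a reduced expression for $w'$, Proposition \ref{p:titsrels} gives $\sigma_w = \sigma_\alpha \sigma_{w'}$, and reading the reduced expression backwards for $w^{-1} = w'^{-1}s_\alpha$ yields $\sigma_{w^{-1}} = \sigma_{w'^{-1}} \sigma_\alpha$. Combining the inductive hypothesis with relation (1) (conjugation by $\sigma_\alpha$ acts as $s_\alpha$ on $H$) and relation (3) ($\sigma_\alpha^2 = \ch\alpha(-1) = \exp(\pi i \ch\alpha)$),
$$\sigma_w \sigma_{w^{-1}} = \sigma_\alpha \exp(\pi i (\ch\rho - w'\ch\rho)) \sigma_\alpha = \exp\bigl(\pi i (s_\alpha \ch\rho + \ch\alpha - w\ch\rho)\bigr).$$
Since $\langle \alpha, \ch\rho\rangle = 1$ for any simple $\alpha$, one has $s_\alpha \ch\rho = \ch\rho - \ch\alpha$, so $s_\alpha \ch\rho + \ch\alpha = \ch\rho$ and the induction closes.

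Specializing to $w = w_0$: in $W$ we have $w_0^{-1} = w_0$, so the canonical lifts coincide, $\sigma_{w_0^{-1}} = \sigma_{w_0}$, and $w_0 \ch\rho = -\ch\rho$ gives $\sigma_{w_0}^2 = \exp(2\pi i \ch\rho)$. For centrality, for each root $\alpha$ compute $\alpha(\exp(2\pi i \ch\rho)) = \exp(2\pi i \langle \alpha, \ch\rho\rangle)$; writing $\alpha = \sum c_i \alpha_i$ with $c_i \in \Z$ and using $\langle \alpha_i, \ch\rho\rangle = 1$ gives $\langle \alpha, \ch\rho\rangle = \sum c_i \in \Z$, hence the value is $1$, so $\exp(2\pi i \ch\rho) \in Z(G)$.

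For independence from the choice of positive roots, any other choice produces $(\ch\rho)' = w\ch\rho$ for some $w \in W$; then $\exp(2\pi i \cdot w\ch\rho) = w \exp(2\pi i \ch\rho) w^{-1} = \exp(2\pi i \ch\rho)$ since the element is central. For invariance under automorphisms, by Theorem \ref{t:auts} every automorphism is an inner automorphism composed with a $\caP$-distinguished one; inner automorphisms act trivially on $Z(G)$, while a $\caP$-distinguished automorphism preserves $B$ and therefore permutes the positive coroots, fixing $\ch\rho$ and hence $\exp(2\pi i \ch\rho)$. The only real subtlety lies in the inductive step, where one must be careful that the factorizations $\sigma_w = \sigma_\alpha \sigma_{w'}$ and $\sigma_{w^{-1}} = \sigma_{w'^{-1}} \sigma_\alpha$ hold for the canonical lifts; once this bookkeeping is fixed, the algebra telescopes cleanly.
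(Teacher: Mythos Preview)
Your proof is correct and follows essentially the same induction on $\ell(w)$ as the paper: write $w=s_\alpha w'$ reduced, factor $\sigma_w=\sigma_\alpha\sigma_{w'}$ and $\sigma_{w^{-1}}=\sigma_{w'^{-1}}\sigma_\alpha$, conjugate the inductive hypothesis by $\sigma_\alpha$, and absorb the extra $\sigma_\alpha^2=\exp(\pi i\ch\alpha)$ using $s_\alpha\ch\rho=\ch\rho-\ch\alpha$. The paper simply declares the final assertions ``easy,'' whereas you spell out the centrality, independence of positive system, and $\Aut(G)$-invariance; your arguments for these are fine and add nothing new methodologically.
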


\begin{proof}
We proceed by induction on the length of $w$.
If $w$ is a simple reflection $s_\alpha$ then
$s_\alpha\ch\rho=\ch\rho-\ch\alpha$, and this reduces to Proposition
\ref{p:titsrels}(3).

Write $w=s_\alpha u$ with $\alpha$ simple and $\ell(w)=\ell(u)+1$. 
Then $\sigma_{w}=\sigma_\alpha\sigma_u$, $w\inv=u\inv s_\alpha$, $(\sigma_w)\inv=\sigma_{u\inv}\sigma_\alpha$, 
and
\begin{equation}
\begin{aligned}
\sigma_w\sigma_{w\inv}&=
\sigma_\alpha\sigma_u\sigma_{u\inv}\sigma_\alpha\\
&=
\sigma_\alpha\sigma_u\sigma_{u\inv}\sigma_\alpha^{-1}\exp(\pi i\ch\alpha)\\
&=
\sigma_\alpha\exp(\pi i(\ch\rho-u\ch\rho))\sigma_\alpha^{-1}\exp(\pi i\ch\alpha)\quad\text{
  (by the inductive step)}\\
&=
\exp(\pi i(s_\alpha\ch\rho-w\ch\rho))\exp(\pi i\ch\alpha)\\
&=
\exp(\pi i(\ch\rho-\ch\alpha-w\ch\rho))\exp(\pi i\ch\alpha)=
\exp(\pi i(\ch\rho-w\ch\rho)).
\end{aligned}
\end{equation}
The final assertion is easy.
\end{proof}

We thank Marc van Leeuwen for this proof.

We only need what follows for the part of the main theorem involving
the Chevalley involution $\tau$ of $W_\R$. 
Let $C=C_{\caP}$ be the Chevalley involution defined by  $\caP$.

\begin{lemma}
$C(\sigma_w)=(\sigma_{w\inv})^{-1}$
\end{lemma}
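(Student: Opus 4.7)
The plan is to reduce to the case of simple reflections and then bootstrap via a reduced expression, using the formula for $\sigma_w$ from Proposition \ref{p:titsrels}.

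First I would compute $C(\sigma_\alpha)$ for a simple root $\alpha \in \Pi$ directly. From the construction of $C_\caP$ in the proof of Proposition \ref{p:CH}, the Chevalley involution sends the pinning $\caP$ to $\caP^{op}$, and in particular $dC(X_\alpha) = X_{-\alpha}$ for every $\alpha \in \Pi$. Since $\sigma_\alpha = \exp\bigl(\tfrac{\pi}{2}(X_\alpha - X_{-\alpha})\bigr)$ by definition, functoriality of $\exp$ gives
\begin{equation*}
C(\sigma_\alpha) = \exp\!\bigl(\tfrac{\pi}{2}(X_{-\alpha} - X_\alpha)\bigr) = \sigma_\alpha^{-1}.
\end{equation*}

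Next I would extend to arbitrary $w \in W$. Fix a reduced expression $w = s_{\alpha_1}\cdots s_{\alpha_n}$ with each $\alpha_i \in \Pi$, so that $\sigma_w = \sigma_{\alpha_1}\cdots\sigma_{\alpha_n}$ by Proposition \ref{p:titsrels}. Applying $C$ as a group homomorphism and using the simple-root computation above,
\begin{equation*}
C(\sigma_w) = C(\sigma_{\alpha_1})\cdots C(\sigma_{\alpha_n}) = \sigma_{\alpha_1}^{-1}\cdots\sigma_{\alpha_n}^{-1} = (\sigma_{\alpha_n}\cdots\sigma_{\alpha_1})^{-1}.
\end{equation*}
Since reversing a reduced expression for $w$ gives a reduced expression for $w^{-1}$, namely $w^{-1} = s_{\alpha_n}\cdots s_{\alpha_1}$, the canonical preimage from Proposition \ref{p:titsrels} is $\sigma_{w^{-1}} = \sigma_{\alpha_n}\cdots\sigma_{\alpha_1}$. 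Therefore $C(\sigma_w) = (\sigma_{w^{-1}})^{-1}$, as required.

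There is essentially no obstacle: the only input beyond the definition of $\sigma_\alpha$ is the identity $C(X_\alpha) = X_{-\alpha}$, which is built into the construction of $C_\caP$, and the independence-of-reduced-expression statement in Proposition \ref{p:titsrels}, which makes the calculation unambiguous.
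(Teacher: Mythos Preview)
Your proof is correct and takes essentially the same approach as the paper: both compute $C(\sigma_\alpha)=\sigma_\alpha^{-1}$ from $C(X_\alpha)=X_{-\alpha}$, then use the reduced-expression formula $\sigma_w=\sigma_{\alpha_1}\cdots\sigma_{\alpha_n}$ and the fact that the reversed word is a reduced expression for $w^{-1}$. The paper phrases the bootstrap as an induction on length while you write it out directly, but the content is identical.
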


\begin{proof}
We proceed by induction on the length of $w$.

Since
$C(X_\alpha)=X_{-\alpha}$ ($\alpha\in\Pi$), we conclude
$C(\sigma_\alpha)=\sigma_\alpha\inv$. 

Suppose $w=s_\alpha u$ with $\text{length}(w)=\text{length}(u)+1$. 
Then $\sigma_w=\sigma_\alpha \sigma_{u}$, and
$C(\sigma_w)=C(\sigma_\alpha)C(\sigma_{u})=
\sigma_\alpha\inv(\sigma_{u\inv})\inv$.
On the other hand $w\inv=u\inv s_\alpha$, so
$\sigma_{w\inv}=\sigma_{u\inv}\sigma_\alpha$, and taking the
inverse gives
the result.
\end{proof}

Fix a $\caP$-distinguished involution $\tau$ of $G$.
Consider the semidirect product $G\rtimes\langle\delta\rangle$
where $\delta^2=1$ and $\delta$ acts on $G$ by $\tau$.
By  Lemma \ref{l:commutes}, $C=C_\caP$
extends to the semidirect product, fixing $\delta$.
Since $\tau$ normalizes $H$
 and $B$, it defines an automorphism of $W$, 
satisfying $\tau(\sigma_w)=\sigma_{\tau(w)}$.
\begin{lemma}
\label{l:tits}
Suppose $w\in W$ satisfies $w\tau(w)=1$. Then

\smallskip

\noindent (a) $C(\sigma_w\delta)=(\sigma_w\delta)\inv$

\smallskip

\noindent (b) Suppose $g\in\Norm_G(H)$ is a representative
of $w$. Then $C(g\delta)$ is $H$-conjugate to $(g\delta)\inv$. 
\end{lemma}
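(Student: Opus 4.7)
The plan is to reduce (a) directly to the preceding lemma giving $C(\sigma_w)=\sigma_{w^{-1}}^{-1}$ and to Lemma \ref{l:commutes} giving $C(\delta)=\delta$, and then to deduce (b) from (a) by absorbing the difference between $g$ and $\sigma_w$ into an inner automorphism by an element of $H$.

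For (a), I would unpack both sides inside $G\rtimes\langle\delta\rangle$. With $\delta$ of order $2$ and $\delta g\delta^{-1}=\tau(g)$, a direct manipulation gives $(\sigma_w\delta)^{-1}=\delta\sigma_w^{-1}=\tau(\sigma_w)^{-1}\delta=\sigma_{\tau(w)}^{-1}\delta$, using that $\tau(\sigma_w)=\sigma_{\tau(w)}$. The hypothesis $w\tau(w)=1$ says precisely $\tau(w)=w^{-1}$, so the right-hand side becomes $\sigma_{w^{-1}}^{-1}\delta$. On the other hand, $C(\sigma_w\delta)=C(\sigma_w)\,C(\delta)=\sigma_{w^{-1}}^{-1}\delta$ by the preceding lemma and Lemma \ref{l:commutes}. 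The two expressions coincide.

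For (b), I would write $g=h_0\sigma_w$ for some $h_0\in H$, which is possible because $g$ represents $w$. Since $C$ inverts $H$, part (a) yields
\[
C(g\delta)=C(h_0)\,C(\sigma_w\delta)=h_0^{-1}(\sigma_w\delta)^{-1}.
\]
The same semidirect-product manipulation as in (a) gives $(g\delta)^{-1}=\tau(g)^{-1}\delta=\sigma_{w^{-1}}^{-1}\tau(h_0)^{-1}\delta$. Conjugating $C(g\delta)$ by $h_0\in H$ and using $\delta h_0^{-1}=\tau(h_0)^{-1}\delta$ produces exactly $\sigma_{w^{-1}}^{-1}\tau(h_0)^{-1}\delta=(g\delta)^{-1}$. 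Thus $h_0\,C(g\delta)\,h_0^{-1}=(g\delta)^{-1}$, establishing $H$-conjugacy with an explicit conjugator.

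There is no real obstacle: the proof is bookkeeping inside the semidirect product, and all of the substantive input has already been packaged in the preceding lemma and Lemma \ref{l:commutes}. The only moment needing any care is the distinction between multiplying by $h_0$ on the left versus on the right, and that is precisely what the $H$-conjugation in (b) is there to absorb.
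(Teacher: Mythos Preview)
Your proof is correct and follows essentially the same route as the paper: both use the preceding lemma $C(\sigma_w)=\sigma_{w^{-1}}^{-1}$, the extension $C(\delta)=\delta$, and $\tau(\sigma_w)=\sigma_{\tau(w)}$ together with $\tau(w)=w^{-1}$ for (a), and then write $g=h_0\sigma_w$ to reduce (b) to (a). The paper's (b) is slightly more compact---it notes directly that $h_0^{-1}(\sigma_w\delta)^{-1}=h_0^{-1}(h_0\sigma_w\delta)^{-1}h_0$ without expanding $(g\delta)^{-1}$---but your explicit verification arrives at the same conjugator $h_0$.
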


\begin{proof}
By the previous lemma, and using $\tau(w)=w\inv$, we compute
$$
C(\sigma_w\delta)\sigma_w\delta
=
(\sigma_{w\inv})\inv\sigma_{\tau(w)}=
(\sigma_{\tau(w)})\inv\sigma_{\tau(w)}=1.
$$
For (b) write $g=h\sigma_w\delta$ with $h\in H$. 
Then 
$C(g)=C(h\sigma_w\delta)=h\inv(\sigma_w\delta)\inv=h\inv(h\sigma_w\delta)\inv
h$. 
\end{proof}

\sec{L-parameters}
\label{s:lparameters}

Fix a Cartan involution $\theta$ of $G$.
Let $\ch G$ be the connected, complex dual group of $G$. 
The
L-group $\LG$ of $G$ is $\langle \ch G,\ch\delta\rangle$ where
$\ch\delta^2=1$, and $\ch\delta$ acts on $\ch G$ by a homomorphism
$\ch\theta_0$, which we now describe.
See \cite{borelCorvallis}, \cite{av1}, or  \cite{algorithms}*{Section 2}.

Fix Borel and Cartan subgroups $B_0,H_0$ and let 
$$D_b=(X^*(H_0),\Pi,X_*(H_0),\ch\Pi)$$
be the corresponding based root datum. Similarly 
choose $\ch B_0,\ch H_0$ for $\ch G$ to define $\ch D_b$. 
We identify $X^*(H_0)=X_*(\ch H_0)$ and 
$X_*(H_0)=X^*(\ch H_0)$. 
Also fix a pinning $\ch\caP=(\ch B_0,\ch H_0,\{X_{\ch\alpha}\})$ for
$\ch G$.
See  Section \ref{s:aut}.

An automorphism $\mu$ of $D_b$ consists of a pair
$$(\tau,\tau^t) \in \Aut(X^*(H_0))\times \Aut(X_*(H_0)),$$ 
where the transpose is defined with respect to the 
perfect pairing 
$$X^*(H_0)\times X_*(H_0)\rightarrow \Z.$$
By definition $\tau$ and $\tau^t$  preserve $\Pi,\ch\Pi$ respectively. 
Interchanging $(\tau,\tau^t)$ defines a transpose isomorphism
$\Aut(D_b)\simeq \Aut(\ch D_b)$, denoted $\mu\rightarrow\mu^t$. 
Compose with the embedding 
$\Aut(\ch D_b)\hookrightarrow\Aut(\ch G)$ defined by $\ch\caP$
(Section \ref{s:aut}) to define a map:
\begin{equation}
\label{e:transpose}
\mu\rightarrow\mu^t\colon \Aut(D_b)\hookrightarrow \Aut(\ch G).
\end{equation}

Suppose $\sigma$ is a real form corresponding to
$\theta$
(see the beginning of Section \ref{s:lpackets}). 
Choose $g\in G$ conjugating $\sigma(B_0)$ to $B_0$ and $\sigma(H_0)$ to
$H_0$.  Then $\tau=\iNT(g)\circ\sigma\in \Aut(D_b)$.
Let $\ch\theta_0=\tau^t\in\Aut(\ch G)$. See \cite{borelCorvallis}.
For example, if $G(\R)$ is split, taking $B_0,H_0$ defined over $\R$
shows that $\LG=\ch G\times\Gamma$ (direct product).

Alternatively, using $\theta$ itself gives a version 
naturally related to the most compact Cartan subgroup. 
Let $\gamma$ be the image of $\theta$ in $\Out(G)\simeq
\Aut(D_b)$.
The automorphism $-w_0$  of $X^*(H)$, taking $\gamma$ to $-(w_0(\gamma))$, induces
an automorphism of $D_b$, also denoted $-w_0$.
Thus $-w_0\gamma\in\Aut(D_b)$, and we may define:
\begin{equation}
\label{e:chtheta0}
\ch\theta_0=(-w_0\gamma)^t\in \Aut(\ch G).
\end{equation}
This is the approach of \cite{av1} and \cite{algorithms}. 
It is not hard to see the elements $\tau,\gamma\in\Aut(D_b)$ satisfy
$\tau=-w_0\gamma$, so the two definitions of $\LG$ agree.

\begin{lemma}
\label{l:cpt}
The following conditions are equivalent:
\begin{enumerate}
\item[(1)] $G(\R)$ has a compact Cartan subgroup,
\item[(2)] $\ch\theta_0$ is inner to the Chevalley involution;
\item[(3)] There is an element $y\in\LG\backslash\ch G$ such that 
$yhy\inv=h\inv$ for all $y\in \ch H_0$.
\end{enumerate}
\end{lemma}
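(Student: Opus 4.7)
The plan is to show that each of conditions (1), (2), (3) is equivalent to the single assertion ``$\gamma = 1$ in $\Aut(D_b)$'', where $\gamma \in \Out(G) \simeq \Aut(D_b)$ is the image of $\theta$ introduced just before \eqref{e:chtheta0}. Collecting the three reductions then gives the lemma.

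First, for (1) $\Leftrightarrow \gamma = 1$ I would invoke the classical equivalence between $G(\R)$ having a compact Cartan subgroup, equality of ranks $\mathrm{rank}(G) = \mathrm{rank}(K)$, and $\theta$ being inner. The easy direction: if $H(\R)$ is compact then $\theta$ fixes $H(\R)$ pointwise, hence acts trivially on $X^*(H)$, so after conjugating $H$ to $H_0$ we obtain $\gamma = 1$. Conversely, an inner involution of $G$ fixes some maximal torus pointwise (take the connected centralizer of a semisimple implementing element), supplying a compact Cartan of $G(\R)$.

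For (2) $\Leftrightarrow \gamma = 1$, I would pass to the outer-automorphism group $\Out(\ch G) \simeq \Aut(\ch D_b)$. Acting on $X^*(\ch H_0) = X_*(H_0)$, the class of the Chevalley involution equals the opposition involution of $\ch G$, which under the canonical identification $W(\ch G) \simeq W(G)$ (matching longest elements) is simply $-w_0$. On the other hand, the distinguished automorphism $\ch\theta_0 = (-w_0 \gamma)^t$ has image $\gamma^t \cdot (-w_0)$ on $X_*(H_0)$. The two coincide if and only if $\gamma^t = 1$, equivalently $\gamma = 1$.

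For (3) $\Leftrightarrow \gamma = 1$, an element $y \in \LG \setminus \ch G$ normalizing $\ch H_0$ must have the form $n\ch\delta$ with $n \in N_{\ch G}(\ch H_0)$, and then $\int(y)$ acts on $\ch H_0$ as $w \circ \ch\theta_0$, where $w$ is the image of $n$ in $W$. Condition (3) becomes $w \circ \ch\theta_0 = -\mathrm{id}$ on $\ch H_0$ for some $w$, equivalently $\gamma^t \cdot (-w_0) = -w^{-1}$, i.e., $\gamma^t = w_0 w^{-1} \in W$. Since $\gamma^t$ also preserves $\ch\Pi$ and $W \cap \Stab(\ch\Pi) = \{1\}$, this forces $\gamma^t = 1$, hence $\gamma = 1$. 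The main technical obstacle lies in Step (2): correctly identifying the class of the Chevalley involution in $\Aut(\ch D_b)$ with the opposition involution, which requires the canonical identification of Weyl groups together with careful bookkeeping of transposes. Once that identification is in hand, the three equivalences follow immediately from the formula $\ch\theta_0 = (-w_0 \gamma)^t$.
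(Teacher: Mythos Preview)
Your proposal is correct and follows essentially the same route as the paper: both arguments pivot on the observation that the image of $\ch\theta_0\circ C$ in $\Out(\ch G)\simeq\Aut(\ch D_b)$ is $\gamma^t$, reducing everything to the classical equivalence between ``$G(\R)$ has a compact Cartan subgroup'' and ``$\theta$ is inner'' (the paper invokes Lemma~\ref{l:inth} here, where you invoke the equal-rank criterion). The only organizational difference is that the paper notes (2)~$\Leftrightarrow$~(3) is immediate---since any $y\in\LG\setminus\ch G$ acting by inversion on $\ch H_0$ exhibits $\ch\theta_0$ as a Weyl twist of $C$ and conversely---whereas you reduce (3) to $\gamma=1$ separately via the observation that $\gamma^t\in W\cap\Stab(\ch\Pi)=\{1\}$; this is a slightly longer but entirely valid way to close the loop.
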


\begin{proof}
The equivalence of (2) and (3) is immediate. 
Let $C$ be the Chevalley involution of $\ch G$
with respect to $\ch\caP$. 
It is easy to see \eqref{e:chtheta0} is equivalent to:
the image of $\ch\theta_0\circ C$ in $\Out(\ch G)\simeq\Aut(\ch D_b)$ is  equal to
$\gamma^t$. 
So the assertion is that
$G(\R)$ has a compact Cartan subgroup if and only if $\gamma=1$,
i.e., $\theta$ is an inner automorphism. The direction $\Rightarrow$ holds  by Lemma \ref{l:inth}.
For the other direction, if $\theta=\iNT(x)$ for $x\in G(\C)$,
let $H$ be any Cartan subgroup of $G$ containing $x$. Then $\theta_x$ acts trivially on $H$, i.e., 
$H$ is the complexification of a compact Cartan subgroup of $G(\R)$.
\end{proof}

A homomorphism  $\phi\colon W_\R\rightarrow \LG$ is said to be {\it quasiadmissible} if it is
continuous, $\phi(\C^*)$ consists of semisimple elements, and
$\phi(j)\in \LG\backslash \ch G$
\cite{borelCorvallis}*{8.2}.
We will see in Sections \ref{s:relative} and \ref{s:general} that every
quasiadmissible homomorphism is associated to an L-packet
$\Pi_G(\phi)$, 
which depends only on the $\ch G$-conjugacy class of $\phi$. 
We say $\phi$ is {\it admissible} if it satisfies the additional 
relevancy condition \cite{borelCorvallis}*{8.2(ii)}.
The admissible condition (unlike quasiadmissibility) is sensitive to the real forms of $G$,
and  guarantees that $\Pi_G(\phi)$ is nonempty. If $G(\R)$ is quasisplit every 
quasiadmissible homomorphism is admissible.

After conjugating by $\ch G$ we may assume $\phi(\C^*)\subset \ch
H_0$. 
Let $\ch S$ be the centralizer of $\phi(\C^*)$ in $\ch G$.
Since $\phi(\C^*)$ is connected, abelian and consists of semisimple
elements, $\ch S$ is a  connected reductive complex group, and $\ch
H_0$ is a Cartan subgroup of $\ch S$.
Conjugation by $\phi(j)$ is an
involution of $\ch S$, so $\phi(j)$ normalizes a
Cartan subgroup of $\ch S$.
Equivalently some $\ch S$-conjugate of $\phi(j)$ normalizes $\ch H_0$;
after this change we 
may assume $\phi(W_\R)\subset \Norm_{\ch G}(\ch H_0)$.

Therefore 
\begin{subequations}
\renewcommand{\theequation}{\theparentequation)(\alph{equation}}  
\label{e:phi}
\begin{align}
\label{e:phia}
\phi(z)&=z^\lambda\overline z^{\lambda'}
\quad(\text{for some }\lambda,\lambda'\in X_*(\ch H_0)\otimes\C,\,\,\lambda-\lambda'\in
X_*(\ch H_0))\\
\label{e:phib}
\phi(j)&=h\sigma_w\ch\delta\quad(\text{for some }w\in W,h\in \ch H_0).
\end{align}
Here
(a) is shorthand for
$\phi(e^s)=\exp(s\lambda+\overline s\lambda')\in \ch H_0$ ($s\in\C$), and
the condition on $\lambda-\lambda'$ guarantees this is
well defined. In (b) we're using the element $\sigma_w$ of the Tits
group representing $w$ (Proposition \ref{p:titsrels}).

Conversely, given $\lambda,\lambda',w$ and $h$,
(a) and (b) give
a well-defined homomorphism
$\phi\colon W_\R\rightarrow\LG$ 
if and only if
\begin{align}
&\ch\theta:=\iNT(h\sigma_w\ch\delta)\text{ is an involution of }\ch H_0,\\
&\lambda'=\ch\theta(\lambda),\\
&h\ch\theta(h)(\sigma_w\ch\theta_0(\sigma_w))=\exp(\pi i(\lambda-\ch\theta(\lambda)).
\end{align}

Furthermore (c) is equivalent to 
\begin{equation}
\tag{\ref{e:phi})(c$'$}w\ch\delta(w)=1.
\end{equation}
\end{subequations}

\subsec{Infinitesimal and radical characters}
\label{s:chars}

Suppose $\phi$ is as in \eqref{e:phi}.
We attach two invariants to an admissible homomorphism $\phi$.

\noindent{\bf Infinitesimal Character of $\phi$}

View $\lambda$ as an element of $X^*(H_0)\otimes\C$ via the 
identification $X_*(\ch H_0)=X^*(H_0)$.
The $W(G,H_0)$-orbit of $\lambda$ is independent of all choices, so 
it defines an infinitesimal character  for $G$,
denoted $\chiinf(\phi)$, via the
Harish-Chandra homomorphism. 
\medskip

\noindent{\bf  Radical character of $\phi$}

Recall (Section \ref{s:lpackets}) $\Grad$ is the radical of $G$, and the radical
character of a representation is its restriction to $\Grad(\R)$.

Dual to the inclusion $\iota\colon \Grad\hookrightarrow G$ is a
surjection $\ch\iota\colon \ch
G\twoheadrightarrow \ch[\Grad]$.
For an L-group for $\Grad$ we can take
 $\L\Grad=\langle \ch
[\Grad],\ch\delta\rangle$. 
Thus $\ch\iota$ extends to   a natural surjection
$\ch\iota\colon \LG\rightarrow\L\Grad$ (taking $\ch\delta$ to itself). 
Then $\ch\iota\circ\phi\colon W_\R\rightarrow\L\Grad$, and this defines a
character of $\Grad(\R)$ by the construction of Section
\ref{s:tori}.
We denote this character $\chi_{rad}(\phi)$. 
See \cite{borelCorvallis}*{10.1} and \cite{langlandsClassification}*{page 20}.

\subsec{Relative Discrete Series L-packets}
\label{s:relative}

By a {\em Levi subgroup} of $\LG$ we mean the centralizer $\d M$ of a torus
$\ch T\subset \ch G$, which meets both components of $\LG$
\cite{borelCorvallis}*{Lemma 3.5}. 
We will soon see that an L-packet $\Pi_G(\phi)$ consists of relative discrete series representations
if and only if $\phi(W_\R)$ is not contained in a proper Levi
subgroup.

\begin{lemma}
\label{l:notcontained}
Suppose $\phi$ is as in \eqref{e:phi}.
If $\phi(W_\R)$ is not contained in a proper Levi subgroup then
$\lambda$ is regular and $G(\R)$ has a relatively compact Cartan
subgroup.   
\end{lemma}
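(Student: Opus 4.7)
The plan is to argue the contrapositive: assuming $\lambda$ is singular or $G(\R)$ has no relatively compact Cartan subgroup, I would produce a nontrivial torus $\ch A\subset\ch G$ that centralizes $\phi(W_\R)$, so that $\Cent_{\LG}(\ch A)$ is a proper Levi subgroup containing $\phi(W_\R)$.

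First I would reformulate the hypothesis. Any Levi of $\LG$ has the form $\Cent_{\LG}(\ch A)$ for a torus $\ch A\subset\ch G$, and it contains $\phi(W_\R)$ if and only if $\ch A\subset\Cent_{\ch G}(\phi(W_\R))$. Hence $\phi(W_\R)$ lies in no proper Levi iff $\Cent_{\ch G}(\phi(W_\R))^0=1$. Writing $\ch\theta=\int(h\sigma_w\ch\delta)$ for the involution of $\ch H_0$ induced by $\phi(j)$, one has $\Cent_{\ch H_0}(\phi(W_\R))=\ch H_0^{\ch\theta}$, so the hypothesis forces $(\ch H_0^{\ch\theta})^0=1$, i.e. $\ch\theta$ acts as $-1$ on $\ch H_0$. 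In particular $\phi(j)\in\LG\backslash\ch G$ inverts $\ch H_0$, and Lemma \ref{l:cpt}(3) then yields that $G(\R)$ has a relatively compact Cartan subgroup.

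Next I would deduce regularity of $\lambda$. Let $\ch S=\Cent_{\ch G}(\phi(\C^*))^0$; this is a Levi of $\ch G$ with Cartan $\ch H_0$ and root system $R'=\{\alpha:\langle\alpha,\lambda\rangle=\langle\alpha,\lambda'\rangle=0\}$. Since $\ch\theta=-1$ on $\ch H_0$ and $\lambda'=\ch\theta(\lambda)=-\lambda$, this simplifies to $R'=R_\lambda:=\{\alpha:\langle\alpha,\lambda\rangle=0\}$. If $\lambda$ were singular, $R_\lambda$ would be nonempty, so $\ch S_{\mathrm{der}}$ would be a nontrivial connected semisimple group and $\ch\theta|_{\ch S_{\mathrm{der}}}$ would invert its Cartan; by Proposition \ref{p:CH} this restriction is a Chevalley involution. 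As in the proof of that proposition it swaps $\g_\alpha\leftrightarrow\g_{-\alpha}$, so its fixed-point Lie algebra is spanned by the vectors $X_\alpha+X_{-\alpha}$ for $\alpha$ a positive root of $\ch S_{\mathrm{der}}$, and in particular is nonzero. A maximal torus of the connected reductive group $(\ch S_{\mathrm{der}}^{\ch\theta})^0$ would then provide a nontrivial torus inside $\Cent_{\ch G}(\phi(W_\R))^0$, contradicting the hypothesis.

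The main technical step is the initial equivalence identifying the Levi condition with the triviality of $\Cent_{\ch G}(\phi(W_\R))^0$. After that, the argument is essentially an application of Lemma \ref{l:cpt}(3) together with the observation that the Chevalley involution of a nontrivial semisimple complex group has positive-dimensional fixed-point subgroup.
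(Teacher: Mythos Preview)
Your reformulation at the outset is incorrect when $G$ is not semisimple. A Levi $\Cent_{\LG}(\ch A)$ is \emph{proper} precisely when $\ch A\not\subset Z(\ch G)$, so the hypothesis ``$\phi(W_\R)$ lies in no proper Levi'' is equivalent to the statement that every torus in $\Cent_{\ch G}(\phi(W_\R))$ is central in $\ch G$, i.e.\ $\Cent_{\ch G}(\phi(W_\R))^0\subset Z(\ch G)$, not $=1$. For a concrete failure take $G=\mathbb{G}_m$ split over $\R$: then $\LG=\C^\times\times\Z/2$ has no proper Levi subgroups whatsoever, yet $\Cent_{\ch G}(\phi(W_\R))=\C^\times$ for every $\phi$. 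Your argument is correct as written only when $Z(\ch G)^0=1$, i.e.\ when $G$ is semisimple.

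This error propagates through both conclusions. From the correct hypothesis you only obtain $(\ch H_0^{\ch\theta})^0\subset Z(\ch G)^0$, so $\ch\theta$ need not invert all of $\ch H_0$; it inverts only $\ch H_0\cap\ch G_d$ (equivalently, acts by $-1$ modulo the center). Consequently condition~(3) of Lemma~\ref{l:cpt} is not available for $G$ itself, and your identity $\lambda'=-\lambda$ (used to simplify $R'$ to $R_\lambda$) holds only after pairing with roots. The paper avoids this by never asserting your equivalence: it argues directly that a $\ch\theta$-fixed torus in $\ch S_d$ (respectively in $\ch H_0\cap\ch G_d$) is automatically non-central in $\ch G$, since it lies in a semisimple subgroup whose intersection with $Z(\ch G)$ is finite, and hence would produce a proper Levi. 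From the second of these it gets $\ch\theta=-1$ on $\ch H_0\cap\ch G_d$ and then applies Lemma~\ref{l:cpt} to the derived group to obtain a \emph{relatively} compact Cartan. Your argument can be repaired along exactly these lines, but as it stands the key equivalence, and hence both deductions resting on it, fails for reductive $G$ with positive-dimensional center.
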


See \cite{borelCorvallis}*{Lemma 11.1} and
\cite{langlandsClassification}*{Lemmas 3.1 and 3.3}. 

\begin{proof}
Assume $\phi(W_\R)$ is not contained in a proper Levi subgroup. 
Let $\ch S=\Cent_{\ch G}(\phi(\C^*))$  as in the discussion preceding
\eqref{e:phi}.
Then $\ch\theta=\iNT(\phi(j))$ is an   involution of $\ch S$, and of
its derived group $\ch S_d$. 
There cannot be a torus in $\ch S_d$, fixed (pointwise) by $\ch\theta$; its
centralizer would contradict the assumption. 
Since any involution of a semisimple group fixes a
torus, this implies $\ch S_d=1$, i.e., $\ch S=\ch H_0$, 
which implies $\lambda$ is regular.

Similarly, there can be no torus in $\ch H_0\cap\ch G_d$ fixed
by $\ch\theta$. This implies $\ch\theta(h)=h\inv$ for all $h\in \ch
H_0\cap\ch G_d$. By Lemma \ref{l:cpt} applied to the derived group,
$G(\R)$ has a relatively compact Cartan subgroup. 
\end{proof}

\begin{definition}
\label{d:dslpacket}
In the setting of Lemma \ref{l:notcontained},
$\Pi_G(\phi)$ is the L-packet of relative discrete series
representations determined by infinitesimal character $\chiinf(\phi)$ 
and radical character $\chirad(\phi)$ (see Proposition  \ref{p:dslpacket}).
\end{definition}

\begin{lemma}
$\Pi_G(\phi)$ is nonempty.
\end{lemma}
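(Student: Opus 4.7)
The aim is to produce an element of $\Pi(\phi)$. By Lemma~\ref{l:notcontained}, $\lambda$ is regular and $G(\R)$ has a relatively compact Cartan subgroup; fix one, call it $H(\R)$. Combining Definition~\ref{d:dslpacket}, Proposition~\ref{p:dslpacket}, and Lemma~\ref{l:nonzerolpacket} reduces the problem to exhibiting a genuine character $\Lambda$ of $\wt{H(\R)}$ with $d\Lambda=\lambda$ and $\Lambda|_{\Grad(\R)}=\chirad(\phi)$.

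The plan is to manufacture $\Lambda$ from $\phi$ itself via the torus construction of Section~\ref{s:tori}. Identify $\ch H_0$ with the dual torus $\ch H$ of $H$, fix a positive system $\Delta^+$ of roots of $H$ in $G$ with half-sum $\rho\in\frac12 X^*(H)=\frac12 X_*(\ch H)$, and form the E-group $\E H=\langle\ch H,\ch\delta_E\rangle$ of Lemma~\ref{l:egroup} with $\gamma=\rho$ and $\ch\delta_E$ acting on $\ch H$ by $\ch\theta:=\int(\phi(j))$. The decisive step is to check that $\phi$ is admissible as a homomorphism into this $\E H$. Writing $\phi(j)=h\sigma_w\ch\delta$ as in~\eqref{e:phi}, the relation~\eqref{e:phi}(c$'$) gives $\ch\theta_0(w)=w^{-1}$, and Proposition~\ref{p:titsrels} then gives $\ch\theta_0(\sigma_w)=\sigma_{w^{-1}}$, yielding
\begin{equation*}
\phi(j)^2 \;=\; h\,\ch\theta(h)\,\sigma_w\sigma_{w^{-1}}.
\end{equation*}
Lemma~\ref{l:titslemma} evaluates $\sigma_w\sigma_{w^{-1}}=\exp(\pi i(\rho-w\rho))$, and since the proof of Lemma~\ref{l:notcontained} shows $\ch\theta$ acts as inversion on $\ch H_0\cap\ch G_d$, a short comparison shows that $\phi(j)^2$ and $\exp(2\pi i\rho)$ represent the same class in $\ch H^{\ch\theta}/\{h'\ch\theta(h'):h'\in\ch H\}$, so $\phi$ factors through an admissible map into $\E H$.

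Lemma~\ref{l:egroup} now yields a genuine character $\Lambda:=\chi(\phi)$ of $H(\R)_\rho$; taking inverse limits over positive systems lifts $\Lambda$ canonically to $\wt{H(\R)}$. The identity $d\Lambda=\lambda$ is immediate from~\eqref{e:phia} and~\eqref{e:kappa}. For the radical restriction, the surjection $\ch\iota\colon\LG\twoheadrightarrow\L\Grad$ of Section~\ref{s:chars} and the analogous map $\E H\twoheadrightarrow\L\Grad$ intertwine the Section~\ref{s:tori} constructions; consequently $\Lambda|_{\Grad(\R)}$ coincides with the character of $\Grad(\R)$ attached to $\ch\iota\circ\phi$, which is by definition $\chirad(\phi)$.

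The main obstacle is the E-group matching step in the middle paragraph---verifying $\phi(j)^2\equiv\exp(2\pi i\rho)$ modulo $\{h'\ch\theta(h')\}$---which relies essentially on the Tits group identity of Lemma~\ref{l:titslemma} together with the inversion property of $\ch\theta$ on $\ch H_0\cap\ch G_d$ extracted from Lemma~\ref{l:notcontained}.
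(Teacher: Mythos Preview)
Your overall architecture matches the paper's: reduce to Lemma~\ref{l:nonzerolpacket}, then manufacture $\Lambda$ by recognizing $\langle \ch H_0,\phi(j)\rangle$ as the $\rho$-E-group of $H$ and invoking Lemma~\ref{l:egroup}. The discrepancy is in how you carry out the E-group identification, and there is a genuine gap there.

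You compute $\phi(j)^2=h\,\ch\theta(h)\,\sigma_w\sigma_{w^{-1}}=h\,\ch\theta(h)\,\exp(\pi i(\rho-w\rho))$ and then assert that ``a short comparison'' using $\ch\theta=-1$ on $\ch H_0\cap\ch G_d$ shows this is congruent to $\exp(2\pi i\rho)$ modulo $\{h'\ch\theta(h')\}$. But precisely because $\ch\theta$ is inversion on $\ch H_0\cap\ch G_d$, the map $h'\mapsto h'\ch\theta(h')$ is \emph{trivial} there; its image lies essentially in $(\ch G)_{\text{rad}}$. The obstruction $\exp(-\pi i(\rho+w\rho))$, however, sits in $\ch H_0\cap\ch G_d$, and for a generic $w$ it is a nontrivial, non-central element---so it is \emph{not} in the image. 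Your comparison therefore does not go through as stated.

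What rescues it is the step you skipped: one must deduce $w=w_0$. The paper does this explicitly: from $\ch\theta=w\ch\theta_0$ on $\ch H_0$, the formula $\ch\theta_0=(-w_0\gamma)^t$, and the fact that $\gamma^t=1$ on $\ch H_0\cap\ch G_d$ (because $G_d(\R)$ has a compact Cartan subgroup), the inversion property forces $ww_0=1$. Once $w=w_0$, one has $\rho+w\rho=0$ and the obstruction vanishes; better yet, the paper simply takes $\sigma_{w_0}\ch\delta$ (rather than $\phi(j)$) as the distinguished element and computes $(\sigma_{w_0}\ch\delta)^2=\exp(2\pi i\rho)$ \emph{on the nose} via Lemmas~\ref{l:longfixed} and~\ref{l:titslemma}. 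That pins down the E-group exactly, not just up to isomorphism class, and makes the application of Lemma~\ref{l:egroup} clean.

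A smaller point: your ``identify $\ch H_0$ with the dual torus $\ch H$ of $H$'' requires that the involution $\ch\theta=\int(\phi(j))|_{\ch H_0}$ agree with the involution $-\theta^t$ coming from the real structure on $H$. The paper handles this by conjugating $\theta$ so that $H_0$ itself is $\theta$-stable (and relatively compact), whence $H=H_0$ and the identification is tautological. You should make this move explicit.
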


\begin{proof}
After conjugating $\theta$ if necessary we may assume $H_0$ is
$\theta$-stable. 
Write $\phi$ as in \eqref{e:phi}(a) and (b).
By the discussion of infinitesimal character above $\chiinf(\phi)$ is
defined by $\lambda$, viewed as an element of 
$X^*(H_0)\otimes\C$.

Choose positive roots making $\lambda$ dominant. 
By Lemma \ref{l:nonzerolpacket} it is enough to construct a genuine
character of $H(\R)_\rho$ satisfying $d\Lambda=\lambda$ and
$\Lambda|_{\Grad(\R)}=\chirad$. 
For this we apply Lemma \ref{l:egroup}, 
using the fact that
$\phi(W_\R)\subset \langle \ch
H_0,\sigma_w\ch\delta\rangle$. 
We being by identifying
$\langle \ch H_0,\sigma_w\ch\delta\rangle$ as an E-group.

First we claim $w=w_0$.
By \eqref{e:phi}(b) and (c) $\ch\theta|_{\ch H_0}=w\ch\theta_0|_{\ch H_0}$.
By \eqref{e:chtheta0}, for $h\in \ch H_0\cap\ch G_d$ we have: 
\begin{equation}
\ch\theta(h)=w\ch\theta_0(h)=w(-w_0\gamma^t)(h)=ww_0\gamma^t(h)\inv.
\end{equation}
Since $G_d(\R)$ has a compact Cartan subgroup, $\gamma$ is trivial on $H_0\cap
G_d$ and $\gamma^t(h)=h$.
On the other hand, as in the proof  of Lemma \ref{l:notcontained}, $\ch\theta(h)=h\inv$.
Therefore $ww_0=1$, i.e., $w=w_0$.

Next we compute
\begin{equation}
\label{e:sigma^2}
\begin{aligned}
(\sigma_{w_0}\ch\delta)^2&=w_0\theta_0(w_0)\\
&=w_0^2\quad(\text{by }\eqref{l:longfixed}\text{, since
}\ch\theta_0\text{ is distinguished})\\
&=\exp(2\pi i\rho)\quad\text{ (by }\eqref{e:w0}).
\end{aligned}
\end{equation}

Thus, in the terminology of Section \ref{s:tori},
$\langle \ch H_0,\sigma_{w_0}\ch\delta\rangle$ is identified with 
the E-group for $H$ defined by $\ch\rho$. 
Consequently $\phi\colon W_\R\rightarrow\langle\ch H_0,\sigma_{w_0}\ch\delta\rangle$
defines a genuine character $\chi$ of $H(\R)_\rho$. 

By construction $d\chi=\lambda$. The fact that
$\chi|_{\Grad(\R)}=\chirad$ is a straightforward check. Here are
the details.

Write $h$ of \eqref{e:phi}(b) as $h=\exp(2\pi i\mu)$.  We use the
notation of Section \ref{s:tori}, especially \eqref{e:phi1}.
Using the fact that
$\sigma_{w_0}\ch\delta$ is the distinguished element of the E-group of
$H_0$ we have
$\chi=\chi(\lambda,\kappa)$
where
\begin{equation}
\label{e:kappa}
\kappa=\frac12(1-\ch\theta)\lambda-(1+\ch\theta)\mu\in \rho+X^*(H_0).
\end{equation}
Write 
$p\colon X^*(H_0)\rightarrow X^*(\Grad)$ 
for the map dual to inclusion $\Grad\rightarrow H_0$.
Recall (Section \ref{s:lpackets}) there is a canonical splitting of 
the cover of $Z(G(\R))$; using this splitting $\chi(\rho,\rho)|_{Z(G(\R))}=1$, 
and 
$$
\chi|_{\Grad(\R)}=\chi(p(\lambda),p(\kappa-\rho)).
$$

On the other hand, by the discussion of the character of $\Grad(\R)$
above,
the E-group of $\Grad$ is $\langle \ch\Grad,\ch\delta\rangle$.
The map $p\colon X^*(H_0)\rightarrow X^*(\Grad)$ is identified with a 
map $p\colon X_*(\ch H_0)\rightarrow X_*(\ch\Grad)$.
Then $\ch\iota(\phi(j))=\ch\iota(h\sigma_w\ch\delta)=\ch\iota(h)\ch\delta=\exp(2\pi
ip(\mu))\ch\delta$. 
Let 
$$
\kappa'=\frac12(1-\ch\theta)p(\lambda)-(1+\ch\theta)p(\mu)\in X^*(\Grad)
$$
Thus $\kappa'=p(\kappa-\rho)$.  By the construction of Section
\ref{s:tori} applied to $\phi\colon W_\R\rightarrow \ch\Grad$,
$\chirad=\chi(p(\lambda),\kappa')=\chi(p(\lambda),p(\kappa-\rho))=\chi|_{\Grad(\R)}$.
\end{proof}

\begin{remark}
The fact that $(\sigma_{w_0}\ch\delta)^2=\exp(2\pi i\rho)$ is 
the analogue of \cite{langlandsClassification}*{Lemma 3.2}.
\end{remark}

We can read off the central character of the L-packet from the
construction. We defer this until we consider general L-packets
(Lemma \ref{l:charforchi}).
 
\bigskip

\subsec{General L-packets}
\label{s:general}

See \cite{borelCorvallis}*{Section 11.3} and
\cite{langlandsClassification}*{pages 40--58}. 

Recall 
(see the beginning of the previous section)
a Levi subgroup $\d M$ of $\LG$ is the centralizer of a torus $\ch T$, which meets both
components of $\LG$.
An admissible homomorphism $\phi$ may factor through various Levi subgroups $\d M$. 
We first choose $\d M$ so that $\phi\colon W_\R\rightarrow\d M$ defines a
relative discrete series L-packet of $M$.

Choose
a maximal torus $\ch T\subset \Cent_{\ch G}(\phi(W_\R))$
and define
\begin{equation}
\label{e:levi}
\ch M=\Cent_{\ch G}(\ch T),\,\d M=\Cent_{\LG}(\ch T).
\end{equation}
Then $\d M=\langle \ch M,\phi(j)\rangle$, so $\d M$ is a Levi subgroup,
and $\phi(W_\R)\subset\d M$.

Suppose $\phi(W_\R)\subset\Cent_{\d M}(\ch U)$
where $\ch U\subset\ch M$ is a torus.
Then $\ch U$ centralizes $\phi(W_\R)$ and  $\ch T$,
so $\ch U\ch T$ is a torus in $\Cent_{\ch G}(\phi(W_\R))$.
By maximality  $\ch U\subset\ch T$ and $\Cent_{\d M}(\ch U)=\d M$. 
Therefore
$\phi(W_\R)$ is not contained in any proper Levi subgroup of $\d M$.

\begin{lemma}
\label{l:independent}
The group $\d M$ is independent of the choice of $\ch T$, up to
conjugation by $\Cent_{\ch G}(\phi(W_\R))$.  
\end{lemma}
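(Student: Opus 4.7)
The plan is to reduce the claim to the standard fact that maximal tori in a connected reductive group are conjugate. Set $\ch S=\Cent_{\ch G}(\phi(W_\R))$. The key structural observation is that $\ch S$ is reductive: since $\phi(\C^*)$ is a connected abelian subgroup of semisimple elements (by admissibility of $\phi$), its centralizer $\ch L:=\Cent_{\ch G}(\phi(\C^*))$ is a connected reductive subgroup of $\ch G$. Then $\phi(j)$ normalizes $\ch L$ and $\phi(j)^2=\phi(-1)\in\phi(\C^*)\subset Z(\ch L)$, so conjugation by $\phi(j)$ is a finite-order (essentially involutive) automorphism of $\ch L$. By Steinberg's theorem, its fixed point subgroup $\ch S=\ch L^{\int(\phi(j))}$ has reductive identity component.

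Next, since every torus is connected, every maximal torus of $\ch S$ is already contained in $\ch S^0$ and is in fact a maximal torus of $\ch S^0$. Therefore, given two choices $\ch T_1,\ch T_2$ of maximal torus in $\ch S$, the standard conjugacy theorem for maximal tori in the connected reductive group $\ch S^0$ produces an element $s\in\ch S^0\subset\Cent_{\ch G}(\phi(W_\R))$ with $s\ch T_1 s\inv=\ch T_2$.

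Finally, centralizers transform equivariantly under conjugation: since $s\in\ch G$, one has
\begin{equation*}
\d M_2 = \Cent_{\LG}(\ch T_2) = \Cent_{\LG}(s\ch T_1 s\inv) = s\,\Cent_{\LG}(\ch T_1)\,s\inv = s\,\d M_1\,s\inv,
\end{equation*}
and since $s\in\Cent_{\ch G}(\phi(W_\R))$, this is precisely the conjugacy assertion of the lemma.

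The only substantive point is the reductivity of $\ch S$, which is not purely formal; this is where Steinberg's theorem (applied to the finite-order automorphism $\int(\phi(j))$ of the connected reductive group $\ch L=\Cent_{\ch G}(\phi(\C^*))$) is needed. Once this is in hand, the rest is routine. In particular no use is made of the structure of $\LG$ beyond the fact that $\ch G\subset\LG$ and $\phi(W_\R)\subset\LG$.
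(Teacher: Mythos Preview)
Your proof is correct and follows essentially the same route as the paper: both reduce the claim to conjugacy of maximal tori in the identity component of $\Cent_{\ch G}(\phi(W_\R))$, after observing that this group arises as the fixed points of the involution $\int(\phi(j))$ on the connected reductive group $\Cent_{\ch G}(\phi(\C^*))$. Your argument is in fact more careful than the paper's, which simply asserts reductivity of the fixed-point group without citing Steinberg (and note that your automorphism really is an involution, since $\phi(-1)\in Z(\ch L)$, so the ``essentially involutive'' hedge is unnecessary).
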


\begin{proof}
Conjugation by $\phi(j)$ defines an involution of the connected reductive group
$\ch S=\Cent_{\ch G}(\phi(\C^*))$
(see the discussion after Lemma \ref{l:cpt}),
and $\Cent_{\ch G}(\phi(W_\R))$ is the fixed points of this involution. 
Thus $\ch T$ is a maximal torus in (the identity component of) this reductive
group, and any two such tori are conjugate by $\Cent_{\ch G}(\phi(W_\R))$. 
\end{proof}

The idea is to identify 
$\d M$ with the L-group of
a Levi subgroup $M'(\R)$ of $G(\R)$. Then,
since $\phi(W_\R)\subset \d M$ is not 
contained in any proper Levi subgroup,
it defines  a relative discrete series L-packet for
$M'(\R)$. We obtain $\Pi_G$ by induction. Here are the details.

We need to  identify $\d M=\langle \ch
M,\phi(j)\rangle$ with an L-group. A crucial technical point is
that after conjugating we may assume $\d M=\langle \ch
M,\ch\delta\rangle$, making this identification clear.

\begin{lemma}[\cite{borelCorvallis}, Section 3.1]
\label{l:standardM}
Suppose $S$ is a $\ch\theta_0$-stable subset of $\ch\Pi$.
Let $\ch M_S$ be the corresponding Levi subgroup of $\ch G$:
$\ch H_0\subset\ch M_S$, and $S$ is a set of simple roots
of $\ch H_0$ in $\ch M_S$. Let $\d M_S=\ch
M_S\rtimes\langle\ch\delta\rangle$, a Levi subgroup of $\LG$.

Let $M_S\supset H_0$ be the Levi subgroup of $G$ with simple roots
$\{\alpha\,|\,\ch\alpha\in S\}\subset\Pi$.
Suppose some conjugate  $M'$ of $M_S$ is defined over $\R$. 
Write $\L M'=\ch M'\rtimes\langle\ch\delta_{M'}\rangle$.
Then conjugation  induces an isomorphism $\L M'\simeq \d M_S$,
taking $\ch\delta_{M'}$ to $\ch\delta$.

Any Levi subgroup of $\L G$ is $\ch G$-conjugate to $\d M_S$ for some
$\ch\theta_0$-stable set $S$.
\end{lemma}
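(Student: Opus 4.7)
The plan is to establish the three assertions in turn, using the structure theory of standard Levi subgroups together with the functoriality of the pinning-based construction \eqref{e:chtheta0} of $\ch\theta_0$ under passage to standard Levis.

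For the first assertion, I would show that $\d M_S = \ch M_S \rtimes \langle\ch\delta\rangle$ is a Levi subgroup of $\LG$ in the sense defined at the beginning of Section \ref{s:general}. The hypothesis that $S$ is $\ch\theta_0$-stable implies that $\ch\theta_0$ preserves $\ch M_S$, so $\ch\delta$ normalizes $\ch M_S$ and the semidirect product is well defined. By the standard theory of $\ch\theta_0$-stable Levis of $\ch G$, one can choose a subtorus $\ch T \subset Z(\ch M_S)^0$ that is fixed pointwise by $\ch\theta_0$ and whose centralizer in $\ch G$ is exactly $\ch M_S$; then $\Cent_{\LG}(\ch T) = \d M_S$, which meets both components of $\LG$.

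Next, for the isomorphism $\L M' \simeq \d M_S$, I would fix $g \in G$ with $g M_S g\inv = M'$. Since $M'$ is defined over $\R$, the real form $\sigma$ on $G$ transports via $\int(g\inv)$ to a real structure on $M_S$ whose Cartan involution $\theta^{M_S}$ stabilizes $H_0$. Carrying out \eqref{e:chtheta0} intrinsically inside $M_S$, using the restricted pinning indexed by $S$, produces a distinguished element $\ch\delta_{M_S}$ of a version of $\L M_S$. The crucial compatibility to check is that $\ch\delta_{M_S}$ and $\ch\delta$ induce the same outer automorphism of $\ch M_S$, i.e.\ that $(-w_0^{M_S}\gamma^{M_S})^t$ agrees with $(-w_0\gamma)^t$ restricted to $\ch M_S$ modulo inner automorphisms of $\ch M_S$. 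This reduces to the standard coset decomposition $w_0 = w_0^{M_S}\cdot w^{M_S}$ of the long Weyl element together with the fact that the diagram automorphisms $\gamma$ and $\gamma^{M_S}$ differ by a piece coming from $w^{M_S}$; both are routine based-root-datum manipulations. Conjugation by the dual of $g$ then implements the desired isomorphism $\L M' \simeq \d M_S$ carrying $\ch\delta_{M'}$ to $\ch\delta$.

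For the final assertion, I would begin with an arbitrary Levi $\d M = \Cent_{\LG}(\ch T)$ meeting both components of $\LG$. After a $\ch G$-conjugation I may assume $\ch T \subset \ch H_0$, and then $\Cent_{\ch G}(\ch T) = \ch M_{S'}$ where $S' \subset \ch\Pi$ is the set of simple coroots vanishing on $\ch T$. Some element $x\ch\delta \in \d M \setminus \ch G$ centralizes $\ch T$; applying Lemma \ref{l:inth} inside $\ch M_{S'}$, a further $\ch M_{S'}$-conjugation replaces this element by $\ch\delta$ itself, so $\ch\delta \in \d M$. This forces $\ch\theta_0$ to preserve $\ch T$ and hence $S'$, exhibiting $\d M$ as $\d M_{S'}$ for a $\ch\theta_0$-stable $S'$. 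The step I expect to be the main obstacle is the compatibility check in the second paragraph: showing that the intrinsically defined $\ch\delta_{M'} \in \L M'$ matches $\ch\delta \in \LG$ under the identification $\ch M' \simeq \ch M_S$; everything else is standard bookkeeping with based root data and Levi subgroups.
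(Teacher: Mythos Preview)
The paper does not supply its own proof of this lemma; it is cited from \cite{borelCorvallis}, Section~3.1, and used as a black box. Your sketches for the first two assertions are reasonable, and you correctly isolate the compatibility of $\ch\delta_{M'}$ with $\ch\delta$ as the only substantive point in the second.

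There is, however, a genuine gap in your argument for the third assertion. The appeal to Lemma~\ref{l:inth} is misplaced: that lemma concerns automorphisms trivial on a \emph{Cartan subgroup}, whereas $\ch T$ is only a central torus of $\ch M_{S'}$, generally much smaller than $\ch H_0$. Worse, the step ``a further $\ch M_{S'}$-conjugation replaces $x\ch\delta$ by $\ch\delta$'' is circular: solving $m(x\ch\delta)m\inv=\ch\delta$ with $m\in\ch M_{S'}$ amounts to $x=m\inv\ch\theta_0(m)$, which already presupposes $\ch\theta_0(\ch M_{S'})=\ch M_{S'}$---exactly the $\ch\theta_0$-stability of $S'$ you are trying to establish. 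The missing idea (and Borel's route) is to pass through a \emph{parabolic} rather than a Levi. Since $y=x\ch\delta$ centralizes $\ch T$, it fixes every cocharacter $\lambda\in X_*(\ch T)$; for generic $\lambda$ one has $\langle\alpha,\lambda\rangle\ne 0$ for all roots $\alpha$ outside $\ch M$, and the associated parabolic $\ch P_\lambda\supset\ch M$ is then $y$-stable. Conjugate by $\ch G$ so that $\ch P_\lambda=\ch P_S\supset\ch B_0$ is standard. Now $x\ch\delta$ normalizes $\ch P_S$, so $\ch P_{\ch\theta_0(S)}=\ch\theta_0(\ch P_S)=x\inv\ch P_S x$; since two $\ch G$-conjugate parabolics both containing $\ch B_0$ must coincide, this forces $\ch P_{\ch\theta_0(S)}=\ch P_S$ and hence $\ch\theta_0(S)=S$. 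A further conjugation by the unipotent radical of $\ch P_S$ carries $\ch M$ to the standard Levi $\ch M_S$, after which $x\in N_{\ch P_S}(\ch M_S)=\ch M_S$ and $\d M=\d M_S$.
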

We refer to the Levi subgroups $\d M_S$ of the lemma 
(where $S$ is $\ch\theta_0$-stable)  as {\em standard Levi subgroups}.

\begin{definition}
\label{d:general}
Suppose $\phi\colon W_\R\rightarrow \LG$ is an admissible homomorphism.
Choose a maximal torus $\ch T$ in $\Cent_{\ch G}(\phi(W_\R))$,
and define $\ch M,\d M$ by \eqref{e:levi}.

After conjugating by $\ch G$, we 
we may assume $\d M$ is a standard Levi subgroup.  Let $M(\C)$
be the corresponding standard Levi subgroup of $G(\C)$.

Assume there is a subgroup $M'$ conjugate to $M$, which is defined
over $\R$; otherwise define $\Pi(\phi)$ to be empty.
Let $\Pi_{M'}(\phi)$ be the L-packet for $M'(\R)$ defined  by
$\phi\colon W_\R\rightarrow \d M\simeq \L M'$. 
(cf. Lemma \ref{l:standardM}).
Define the {\em L-packet for $G$ attached to $\phi$}
$\Pi_G(\phi)$ by induction from $\Pi_{M'}(\phi)$ as in Definition
\ref{d:lpacket}.  

\end{definition}

\begin{lemma}
The L-packet $\Pi_G(\phi)$ is independent of all choices. 
\end{lemma}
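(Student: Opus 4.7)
The plan is to verify independence of $\Pi_G(\phi)$ from each choice made in Definition \ref{d:general}: (i) the maximal torus $\ch T \subset \Cent_{\ch G}(\phi(W_\R))$; (ii) the element of $\ch G$ used to conjugate $\d M$ into standard form; (iii) the real form $M'$ of the standard Levi $M$; and (iv) the parabolic $P = M'N$ used implicitly in the inductive Definition \ref{d:lpacket}. The unifying principle is that each ambiguity affects only auxiliary data, while the outputs are pinned down by the intrinsic invariants $\chiinf(\phi)$ and $\chirad(\phi)$ of Section \ref{s:chars}, which are $\ch G$-invariants of $\phi$.

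For (i), Lemma \ref{l:independent} gives that two choices of $\ch T$ produce subgroups $\d M$ conjugate under $\Cent_{\ch G}(\phi(W_\R))$; such a conjugation replaces $\phi$ by a $\ch G$-conjugate of itself and so preserves $\chiinf(\phi)$ and $\chirad(\phi)$. For (ii), the element carrying $\d M$ to a standard Levi $\d M_S$ is unique up to left multiplication by $\Norm_{\ch G}(\d M_S)$, so the induced map $\phi : W_\R \to \d M_S$ is well-defined up to $\d M_S$-conjugacy. For (iii), Lemma \ref{l:standardM} guarantees an $M'$ exists; for any two such $M'$, $M''$ the identifications $\d M_S \simeq \L M' \simeq \L M''$ are canonical up to inner automorphism, so the relative discrete series packets $\Pi_{M'}(\phi)$ and $\Pi_{M''}(\phi)$ are each determined, via Definition \ref{d:dslpacket} and Proposition \ref{p:dslpacket}, by the common data $\chiinf(\phi), \chirad(\phi)$ transported to $M'(\R)$, respectively $M''(\R)$.

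Finally, for (iv), the parabolic $P$ in Definition \ref{d:lpacket} is determined up to association by the dominance condition on $\mathrm{Re}\langle d\Lambda|_\a, \ch\alpha\rangle$; standard results on real parabolic induction from associate parabolics then ensure that the set of irreducible quotients is the same. The main obstacle is step (iii): a single standard Levi $M_S$ may admit several $G$-conjugates defined over $\R$ which are not $G(\R)$-conjugate, and we must argue that the parabolically induced L-packet in $G(\R)$ is nevertheless independent of which real form $M'$ we pick. The characterization in Proposition \ref{p:dslpacket} of relative discrete series packets by the pair $(\chiinf, \chirad)$ is the bridge that makes this comparison possible, since these two invariants transport unambiguously between any two real structures on $M_S$ via Lemma \ref{l:center} and the functoriality of the Harish-Chandra homomorphism; the resulting L-packets then induce to the same set of irreducible quotients of $G(\R)$.
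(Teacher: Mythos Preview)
Your outline correctly identifies the sources of ambiguity and isolates (iii) as the crux, but the resolution you offer there is not an argument. Saying that $\chiinf$ and $\chirad$ ``transport unambiguously between any two real structures on $M_S$'' and that therefore ``the resulting L-packets then induce to the same set of irreducible quotients of $G(\R)$'' is the statement to be proved, not its proof. Concretely: if $M'$ and $M''$ are two $G(\C)$-conjugates of $M_S$ defined over $\R$, you have L-packets $\Pi_{M'}(\phi)$ and $\Pi_{M''}(\phi)$ of possibly non-isomorphic real groups $M'(\R)$, $M''(\R)$, and you must explain why inducing them to $G(\R)$ yields the same set. Proposition~\ref{p:dslpacket} determines each packet by an infinitesimal character and a character of $M'_{\text{rad}}(\R)$ (respectively $M''_{\text{rad}}(\R)$); these radicals are different tori, and nothing you have written connects the two inductions.

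There is also a slip in (ii): the conjugating element is unique only up to $\Norm_{\ch G}(\d M_S)$, which is in general strictly larger than $\d M_S$, so $\phi:W_\R\to\d M_S$ is well-defined only up to $\Norm_{\ch G}(\d M_S)$-conjugacy, not $\d M_S$-conjugacy. Elements of the normalizer outside $\d M_S$ can act by outer automorphisms and a priori move $\Pi_{M'}(\phi)$.

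The paper closes both gaps simultaneously with a structural argument you are missing. First, it shows $M'(\R)$ and $M''(\R)$ are in fact $G(\R)$-conjugate: take a relatively compact Cartan $H'(\R)\subset M'(\R)$; its image $H''(\R)$ in $M''(\R)$ is $G(\C)$-conjugate to it, hence $G(\R)$-conjugate (a standard fact about real Cartans), and $M'(\R),M''(\R)$ are the centralizers of the split parts. This reduces to $M'=M''$ with the residual ambiguity coming from $g\in\Norm_{G(\C)}(M'(\R))$. After adjusting by $M'(\R)$ one may take $g\in\Norm_{G(\C)}(H'(\R))$, and then the key decomposition
\[
\Norm_{G(\C)}(H'(\R))=\Norm_{M'(\C)}(H'(\R))\,\Norm_{G(\R)}(H'(\R))
\]
(cf.\ \cite[Proposition~3.12]{ic4}, \cite[Theorem~2.1]{shelstad_innerforms}) finishes the job: the $M'(\C)$ factor preserves infinitesimal and central characters, hence fixes $\Pi_{M'}(\phi)$ by Proposition~\ref{p:dslpacket}, while the $G(\R)$ factor is absorbed after induction. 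This normalizer decomposition is the missing idea in your sketch.
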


\begin{proof}
By Lemma \ref{l:independent} the choice of $\ch T$ is irrelevant:
another choice leads to an automorphism of $\d M$ fixing $\phi(W_\R)$
pointwise. 

It is straighforward to see that the other choices, including
another Levi subgroup $M''$,  would give an element $g\in G(\C)$ such that
$\iNT(g)\colon M'\rightarrow M''$ is defined over $\R$, and
this isomorphism takes $\Pi_{M'}(\phi)$ to $\Pi_{M''}(\phi)$.

First of all we claim $M'(\R)$ and $M''(\R)$ are $G(\R)$-conjugate.
To see this, let $H'(\R)$ be a relatively compact Cartan subgroup of
$M'(\R)$. 
Then $H''(\R)=gH'(\R)g\inv$ is a relatively compact Cartan subgroup of $M''(\R)$. 
In fact $H'(\R)$ and $H''(\R)$ are $G(\R)$-conjugate: 
two Cartan subgroups of $G(\R)$ are $G(\C)$-conjugate if and
only if they are $G(\R)$-conjugate. Therefore $M'(\R),M''(\R)$, 
being the
centralizers of the split components of $H'(\R)$ and $H''(\R)$,
are also $G(\R)$-conjugate.

Therefore, since the inductive step is not affected by conjugating by $G(\R)$, we
may assume $M'=M''$. Then $g\in\Norm_{G(\C)}(M')$, and furthermore
$g\in\Norm_{G(\C)}(M'(\R))$.

Now $gH'(\R)g\inv$ is another relatively compact Cartan subgroup of
$M'(\R)$, so after replacing $g$ with $gm$ for some $m\in M'(\R)$ we
may assme $g\in \Norm_{G(\C)}(H'(\R))$.
It is well known that 
$$
\Norm_{G(\C)}(H'(\R))=\Norm_{M'(\C)}(H'(\R))\Norm_{G(\R)}(H'(\R)).
$$
For example see 
\cite{ic4}*{Proposition 3.12} (where the group in question is
denoted $W(R)^\theta$), or \cite{shelstad_innerforms}*{Theorem 2.1}.
Since conjugation by $M'(\C)$ does not change  infinitesimal or central
characters, by 
Proposition \ref{p:dslpacket} it preserves $\Pi_{M'}(\phi)$.
(See Lemma \ref{l:natural}). 
As above $G(\R)$ has no effect after the inductive step. 
This completes the proof.
\end{proof}

We now give the formula for the central character of $\Pi_G(\phi)$.
This follows immediately from the preceding discussion, and \eqref{e:kappa} applied to $M$.

\begin{lemma}
\label{l:charforchi}
Write $\phi$ as in \eqref{e:phi}(a) and (b), and suppose $h=\exp(2\pi
i\mu)$, with $\mu\in X_*(\ch H_0)\otimes\C\simeq X^*(H_0)\otimes\C$.
Let $\rho_i$ be one-half the sum of any set of 
positive roots of $\{\alpha\,|\,\ch\theta\alpha=-\alpha\}$, 
Set 
$$
\tau=\frac12(1-\ch\theta)\lambda-(1+\ch\theta)\mu+\rho_i\in X^*(H_0).
$$
\end{lemma}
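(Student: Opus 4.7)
My plan is to reduce to the relative discrete series case already handled in the preceding lemma, and then transport the formula back to $G$ through the parabolic induction step of Definition~\ref{d:general}.

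First I would choose a maximal torus $\ch T\subset \Cent_{\ch G}(\phi(W_\R))$ and form the Levi subgroup $\d M=\Cent_{\LG}(\ch T)$ as in \eqref{e:levi}. After conjugation by $\ch G$ I may assume $\d M$ is standard, corresponding to a standard Levi $M$ of $G$ with an $\R$-form $M'$; by construction $\phi$ factors through $\L M'$ and defines there a relative discrete series L-packet $\Pi_{M'}(\phi)$. The key structural point is that $H_0$ is relatively compact in $M'$, so the roots of $H_0$ in $M'$ are precisely the imaginary roots, i.e.\ those $\alpha$ with $\ch\theta\alpha=-\alpha$. Hence $\rho_i$ in the statement of the lemma is exactly the $\rho_{M'}$ appearing in the relative discrete series construction for $M'(\R)$.

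Next I would apply the computation from the proof of the previous lemma to $\phi:W_\R\rightarrow \L M'$. That computation (together with Lemma~\ref{l:egroup} and \eqref{e:sigma^2}) identifies the genuine character $\Lambda$ of $\wt{H(\R)}$ attached to $\phi$ as $\chi(\lambda,\kappa)$ with
\[
\kappa=\tfrac12(1-\ch\theta)\lambda-(1+\ch\theta)\mu \in \rho_i + X^*(H_0).
\]
Setting $\tau=\kappa$, so that $\tau-\rho_i\in X^*(H_0)$, the central character of the relative discrete series L-packet $\Pi_{M'}(\phi)$ on $Z(M'(\R))$ is then read off, via the canonical splitting of $\wt{H(\R)}$ over $Z(M'(\R))$ discussed after \eqref{e:piggamma}, as the restriction of $\chi(\lambda,\tau)$.

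Finally I would pass from $M'$ to $G$. Because $Z(G(\R))\subset Z(M'(\R))$, and because normalized parabolic induction from $P=M'N$ preserves the restriction of the central character to $Z(G(\R))$ (the modular character $\delta_P^{1/2}$ is trivial on $Z(G(\R))$ since $Z(G)$ centralizes $N$), the central character of $\Pi_G(\phi)$ on $Z(G(\R))$ is simply the restriction of $\chi(\lambda,\tau)$. The main obstacle is bookkeeping: one must verify that the $\rho_i$-shift is the correct one (as opposed to $\rho$ or $\rho_M$), which is precisely resolved by the observation above that all roots of $H_0$ in $M'$ are imaginary, so $\rho_{M'}=\rho_i$, and that any other choice of positive imaginary roots differs by an element of $X^*(H_0)$ and so leaves $\tau$ unchanged modulo the lattice. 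With that in hand the formula for $\tau$ is forced.
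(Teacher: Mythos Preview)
The paper does not actually supply a proof of this lemma; it is stated as something one can ``read off \dots\ from the construction,'' meaning the computation in the proof that $\Pi(\phi)$ is nonempty together with the inductive step in Definition~\ref{d:lpacket}. Your outline is exactly the natural way to fill this in, and the two nontrivial structural points you isolate---that the roots of $H_0$ in $M'$ are precisely the imaginary roots (so $\rho_{M'}=\rho_i$), and that normalized induction preserves the central character on $Z(G(\R))$---are the right ones.

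There is one genuine slip in your bookkeeping. You set $\tau=\kappa$, but the lemma's $\tau$ is $\kappa+\rho_i$; your $\kappa$ lies only in $\rho_i+X^*(H_0)$ and is not an honest character of $H_0$, so ``$\tau|_{Z(G(\R))}$'' in your sense is not yet what the lemma asserts. The missing step is the one recorded just after \eqref{e:piggamma}: the central character of $\Pi_{M'}(\phi)$ is $(\Lambda\otimes e^{\rho_{M'}})|_{Z(M'(\R))}$, where $\Lambda\otimes e^{\rho_{M'}}$ factors to an honest character of $H(\R)$. In the $(\lambda,\kappa)$ parametrization this tensor product shifts $\kappa$ to $\kappa+\rho_{M'}=\kappa+\rho_i$, which is the lemma's $\tau$ and now genuinely lies in $X^*(H_0)$. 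Equivalently, via the canonical splitting $z\mapsto(z,1)$ one has $\chi(\rho_i,\rho_i)|_{Z(M'(\R))}=1$, so restricting $\chi(\lambda,\kappa)$ through the splitting agrees with restricting the honest character $\kappa+\rho_i$ (and also $\kappa-\rho_i$, since $e^{2\rho_i}$ is trivial on $Z(M')$). Once you insert this shift your argument gives exactly the stated formula.
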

Then the central character of $\Pi_G(\phi)$ is $\tau|_{Z(G(\R))}$. 

For the local Langlands classification to be well-defined it should be
natural with respect to automorphisms of $G$. This is the content of
the next lemma.

Suppose $\tau\in\Aut(G)$ is an involution which commutes with $\theta$. 
The automorphism $\tau$ acts on the pair $(\g,K)$, and 
defines an involution on the set of irreducible $(\g,K)$-modules, 
which preserves L-packets.

On the other hand, consider the image of 
$\tau$ under the sequence of maps
$\Aut(G)\rightarrow\Out(G)\simeq \Aut(D_b)\rightarrow\Aut(\ch G)$;
the final arrow is the transpose \eqref{e:transpose}. 
This extends to an automorphism of $\LG$ which we denote $\tau^t$. 
For example $\tau^t=1$ if and only if $\tau$ is an inner automorphism.

\begin{lemma}
\label{l:natural}
Suppose $\phi$ is an admissible homomorphism. Then
$\Pi_G(\phi)^\tau=\Pi_G(\tau^t\circ\phi)$.
\end{lemma}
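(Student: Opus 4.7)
The plan is to exploit the characterization of the Langlands classification emphasized in the introduction: an L-packet is determined by its infinitesimal character, its radical character, and compatibility with parabolic induction. Since $\tau^t$ depends only on the class of $\tau$ in $\Out(G)\simeq\Aut(D_b)$, I would first reduce to two cases: (i) $\tau$ inner, and (ii) $\tau$ is $\caP$-distinguished for a pinning $\caP$ chosen compatibly with $\theta$. By Theorem \ref{t:auts} every $\tau$ splits as an inner times a $\caP$-distinguished automorphism, and if $\tau$ commutes with $\theta$ then (using Lemma \ref{l:commutes} applied to $\theta$) the two factors can be arranged to commute with $\theta$ as well.

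For case (i), $\tau^t=1$, so both sides of the claim read $\Pi(\phi)$. An inner automorphism fixes every infinitesimal character and acts trivially on $\Grad$ (hence on every radical character), and commutes with parabolic induction up to an inner adjustment of the Levi; by the three-fold characterization of L-packets this forces $\Pi(\phi)^\tau = \Pi(\phi)$.

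For case (ii), a $\caP$-distinguished $\tau$ sends standard Levi subgroups $M_S$ (in the sense of Lemma \ref{l:standardM}) to $M_{\tau(S)}$, and $\tau^t$ correspondingly sends $\d M_S$ to $\d M_{\tau^t(S)}$; the identity $\Ind_{P(\R)}^{G(\R)}(\pi)^\tau = \Ind_{\tau(P)(\R)}^{G(\R)}(\pi^\tau)$ together with Definition \ref{d:general} reduces the matching of full L-packets to the discrete series L-packet on the Levi factor $M'(\R)$. For that I would invoke Proposition \ref{p:dslpacket} and check two compatibilities. First, $\chiinf(\phi)^\tau = \chiinf(\tau^t\circ\phi)$: the infinitesimal character is determined by $\lambda\in X_*(\ch H_0)\otimes\C = X^*(H_0)\otimes\C$ from \eqref{e:phia}, and by construction \eqref{e:transpose} realizes the map $\Aut(G)\to\Aut(\ch G)$ precisely as the transpose under this identification, so the two actions on $\h^*$ coincide. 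Second, $\chirad(\phi)^\tau = \chirad(\tau^t\circ\phi)$: applying the torus-level construction of Section \ref{s:tori} to the restriction $\ch\iota\circ\phi: W_\R\to\L\Grad$, and using that $\tau^t|_{\L\Grad}$ is the transpose of $\tau|_{\Grad}$ by the very definition of the transpose, gives the desired match.

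The main obstacle will be the bookkeeping in case (ii): keeping track of how the conjugation in Definition \ref{d:general} that brings $\d M$ to a standard Levi behaves under both $\tau$ and $\tau^t$ simultaneously, and checking that the isomorphism $\L{M'}\simeq \d M_S$ of Lemma \ref{l:standardM} is transported cleanly. Once this setup is in place, the matching is a direct consequence of the characterizations already available in the paper, together with the torus-level naturality from Section \ref{s:tori}.
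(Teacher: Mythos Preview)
Your approach matches the paper's own proof: reduce to the inner and distinguished cases and then verify each via the three-fold characterization (infinitesimal character, radical character, compatibility with parabolic induction); the paper in fact gives fewer details than you do and leaves the distinguished case ``to the reader.'' One small caution: Lemma~\ref{l:commutes} is specifically about the Chevalley involution $C_\caP$, not an arbitrary Cartan involution $\theta$, so your parenthetical justification for arranging both factors to commute with $\theta$ does not follow from it as stated---but the paper glosses over this same point, simply asserting that after modifying by an inner automorphism one may take $\tau$ distinguished.
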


\begin{remark}
Suppose $\tau$ is an inner automorphism of $G=G(\C)$.  It may not be
inner for $K$, and therefore it may act nontrivially on irreducible
representations. So it isn't entirely obvious that $\tau$ preserves
L-packets (which it must by the lemma). 

For example  $\iNT(\diag(i,-i))$ normalizes $SL(2,\R)$, 
and $K(\R)=SO(2)$, and
interchanges the two discrete series representations in an L-packet.
\end{remark}

\begin{proof}
This is straightforward from our characterization of the
correspondence.  Suppose $\tau$ is inner.
Then it  preserves infinitesimal  and radical
characters, and commutes with parabolic induction.
Therefore it preserves L-packets.
On the other hand  $\tau^t=1$.

In general, this shows (after modifying $\tau$ by an inner
automorphism) we may assume $\tau$ is distinguished. Then it is easy
to check the assertion on the level of infinitesimal and radical
characters, and it commutes with parabolic induction. The result
follows. We leave the details to the reader.
\end{proof}

\sec{Contragredient}
\label{s:contragredient}

Suppose $(\pi,V)$ is a $(\g,K)$-module.
Define a$(\g,K)$-module structure on $\Hom_\C(V,\C)$
by
\begin{subequations}
\label{e:contragredient}
\renewcommand{\theequation}{\theparentequation)(\alph{equation}}  
\begin{equation}
\pi^*(X)(f)(v)=-f(\pi(X)v)\quad (v\in V,X\in\g)
\end{equation}
and
\begin{equation}
\pi^h(k)(f)(v)=f(\pi(k\inv)v)\quad (v\in V,k\in K).
\end{equation}
\end{subequations}
Then the {\it dual} of $(\pi,V)$ is defined to be the $(\g,K)$-module
$(\pi^*,V^*)$,
where $V^*\subset \Hom_\C(V,\C)$ is the subspace of $K$-finite functionals.
See \cite{vogan_green}*{Definition 8.5.1}. 

Following \cite{chevalley_III} we say that a $(\g,K)$-module $(\pi',V')$ is contragredient to
$(\pi,V)$ if there is a nondegenerate bilinear form $B\colon V\times
V'\rightarrow\C$, respecting the actions of $\g$ and $K$.  If
$(\pi,V)$ is irreducible then $(\pi',V')$ is contragredient to
$(\pi,V)$ if and only if $(\pi',V')$ is isomorphic to the dual
$(\pi^*,V^*)$. Consequently we follow the common practice of using the
terms ``dual'' and ``contragredient'' interchangeably.

The proof of Theorem \ref{t:main} is now straightforward.
We restate the theorem here.

\begin{theorem}
\label{t:main2}
Let $G(\R)$ be the real points of a connected reductive algebraic
group defined over $\R$, with L-group $\LG$.
Let $C$ be the Chevalley involution of $\LG$ (Section \ref{s:aut})
and let $\tau$ be the Chevalley involution of $W_\R$ (Lemma \ref{l:tau}).
Suppose $\phi\colon W_\R\rightarrow \LG$ is an admissible homomorphism, 
with associated L-packet $\Pi(\phi)$. Let
$\Pi(\phi)^*=\{\pi^*\,|\,\pi\in\Pi(\phi)\}$. 
Then:

\medskip

\noindent (a) $\Pi(\phi)^*=\Pi(C\circ\phi)$

\smallskip

\noindent (b) $\Pi(\phi)^*=\Pi(\phi\circ\tau)$
\end{theorem}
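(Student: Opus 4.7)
My plan is to use the characterization of L-packets emphasized in the introduction: a packet $\Pi_G(\phi)$ is determined by (1) its infinitesimal character, (2) its radical character, and (3) compatibility with parabolic induction. For part (a), I will check that both sides of $\Pi(\phi)^* = \Pi(C\circ\phi)$ transform identically under each of these three invariants; part (b) will follow by showing $C\circ\phi$ and $\phi\circ\tau$ define the same $\ch G$-conjugacy class of admissible homomorphisms.

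First I handle the relative discrete series case for part (a). Write $\phi$ as in \eqref{e:phi}, so $\phi(z) = z^\lambda \overline z^{\lambda'}$ and $\phi(j) = h\sigma_w\ch\delta$. Since $C$ acts by inversion on $\ch H_0$ and fixes $\ch\delta$, we have $(C\circ\phi)(z) = z^{-\lambda}\overline z^{-\lambda'}$; hence $\chiinf(C\circ\phi) = -\lambda$, which is exactly the infinitesimal character of the contragredient packet (since contragredient inverts the infinitesimal character via $W(G,H)$-conjugacy of $-\lambda$ to $-w_0\lambda$). For the radical character, I compose with the projection $\ch\iota:\LG\to\L\Grad$; because $C$ is compatible with this projection (inverts on $\ch\Grad$, fixes $\ch\delta$), Lemma \ref{l:Ctorus} applied to $\ch\iota\circ\phi$ gives $\chirad(C\circ\phi) = \chirad(\phi)^*$, which again matches the radical character of $\Pi(\phi)^*$. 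Since the contragredient of a relative discrete series packet is a relative discrete series packet, Proposition \ref{p:dslpacket} then forces $\Pi(\phi)^* = \Pi(C\circ\phi)$.

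For the general case of part (a), I reduce to the discrete series case via Definition \ref{d:general}. If $\phi$ factors through a standard Levi $\d M$ giving a relative discrete series packet $\Pi_{M'}(\phi)$, then $C$ preserves $\d M$ (it is a Chevalley involution for both $\LG$ and its Levi subgroups, up to the Tits calculations of Section \ref{s:tits}), so $C\circ\phi$ factors through $\d M$ as well and the discrete series case applied to $M'(\R)$ yields $\Pi_{M'}(\phi)^* = \Pi_{M'}(C\circ\phi)$. Now the contragredient of an irreducible quotient of $\Ind_{P(\R)}^{G(\R)}(\pi)$ is an irreducible quotient of $\Ind_{\overline P(\R)}^{G(\R)}(\pi^*)$ (where $\overline P$ is the opposite parabolic), and since the resulting L-packet is independent of the choice of parabolic, this gives $\Pi_G(\phi)^* = \Pi_G(C\circ\phi)$.

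Finally, for part (b) I compare $C\circ\phi$ with $\phi\circ\tau$ directly. Since $\tau(z) = z^{-1}$ for $z\in\C^*$, we have $(\phi\circ\tau)(z) = z^{-\lambda}\overline z^{-\lambda'}$, matching $(C\circ\phi)(z)$. On the $j$-part, $\tau(j) = z_0 j$ for some $z_0\in\C^*$ with $z_0\overline z_0 = 1$ (Lemma \ref{l:tau}), so $(\phi\circ\tau)(j) = \phi(z_0)\phi(j)$; I will conjugate by a suitable element of $\ch H_0$ (and use Lemma \ref{l:tau}'s freedom in $z_0$) to match $(C\circ\phi)(j)$, using the Tits-group identity $C(\sigma_w\ch\delta) = (\sigma_w\ch\delta)^{-1}$ on $H$-conjugacy classes (Lemma \ref{l:tits}(b)). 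The main obstacle will be this last step: verifying that $\phi\circ\tau$ and $C\circ\phi$ are $\ch G$-conjugate as homomorphisms into $\LG$, which requires carefully matching the two prescriptions on $\phi(j)$ using the Tits-group normalization and the relation $w\ch\theta_0(w) = 1$ from \eqref{e:phi})(c$'$.
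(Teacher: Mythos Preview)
Your proposal is correct and mirrors the paper's proof: the relative discrete series case via Proposition~\ref{p:dslpacket} and Lemma~\ref{l:Ctorus}, the general case by parabolic induction with $C_\caP$ restricting to the Chevalley involution of the standard Levi, and part (b) by showing $C\circ\phi$ is $\ch G$-conjugate to $\phi\circ\tau$ using Lemma~\ref{l:tits}(b) together with $w\ch\theta_0(w)=1$. The one place to tighten is the inductive step: rather than appealing to ``independence of the choice of parabolic,'' the actual point (made explicit in the paper) is that the positivity condition in Definition~\ref{d:lpacket} forces the \emph{opposite} parabolic $\overline P$ for $C\circ\phi$ (since $\lambda\mapsto-\lambda$), and then one tracks the socle/cosocle bookkeeping via $\Ind_P(\pi)^*\simeq\Ind_P(\pi^*)$ and $\socle(\Ind_{\overline P}(\pi))=\cosocle(\Ind_P(\pi))$.
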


\begin{proof}
Let $\caP=(\ch B_0,\ch H_0,\{X_{\ch\alpha}\})$ be the pinning used to 
define $\LG$.
After conjugating by $\ch G$ we may assume $C=C_\caP$ (Proposition \ref{p:CH}).
As in the discussion before \eqref{e:phi}, 
we are free to conjugate $\phi$ so that $\phi(\C^*)\in \ch H_0$, 
and $\phi(W_\R)\subset\Norm_{\ch G}(\ch H_0)$.

First assume $\Pi(\phi)$ is an L-packet of relative discrete series
representations. 
Then $\Pi(\phi)$ is determined by its infinitesimal character
$\chiinf(\phi)$ and its radical character $\chirad(\phi)$ (Definition
\ref{d:dslpacket}). It is easy to see that the infinitesimal character
of $\Pi(\phi)^*$ is $-\chiinf(\phi)$, and the 
radical character is $\chirad(\phi)^*$. 
So it is enough to show
$\chiinf(C\circ\phi)=-\chiinf(\phi)$ and
$\chirad(C\circ\phi)=\chirad(\phi)^*$.
The first is obvious from \eqref{e:phi}(a), the definition of
$\chiinf(\phi)$, and 
the fact that $C$ acts by $-1$ on the Lie
algebra of $\ch H_0$. 
The second follows from the fact that $C$ factors to the
Chevalley involution of $\L\Grad$, and the torus
case (Lemma \ref{l:Ctorus}).

Now suppose $\phi$ is any admissible homomorphism such that
$\Pi_G(\phi)$
is nonempty.
As in Definition \ref{d:lpacket} we may assume $\phi(W_\R)\subset\d M$ where $\d M$
is a standard Levi subgroup of $\LG$. Choose $M'$ as in Definition
\ref{d:general} and write $\Pi_{M'}=\Pi_{M'}(\phi)$ as in that Definition.

Write socle (resp. co-socle) for the set of irreducible submodules
(resp. quotients) of an admissible representation.

Choose $P=M'N$ as in Definition \ref{d:lpacket} to define
\begin{subequations}
\renewcommand{\theequation}{\theparentequation)(\alph{equation}}  
\label{e:ind}
\begin{equation}
\Pi_G(\phi)=\cosocle(\Ind_{P(\R)}^{G(\R)}(\Pi_{M'}))
=\bigcup_{\pi\in\Pi_{M'}}\cosocle(\Ind_{P(\R)}^{G(\R)}(\pi)).
\end{equation}

It is immediate from the
definitions that $C_\caP$ 
restricts to the Chevalley involution 
of $\ch M$. Therefore by the preceding case
$\Pi_{M'}(C\circ\phi)=\Pi_{M'}(\phi)^*$.
Compute $\Pi_G(C\circ\phi)$ using Definition \ref{d:lpacket};
this time the positivity condition in Definition \ref{d:lpacket} forces us to
to use the opposite parabolic $\overline
P=M'\overline N$:
\begin{equation}
\Pi_G(C\circ\phi)=\cosocle(\Ind_{\overline P(\R)}^{G(\R)}(\Pi_{M'}(\phi)^*)).
\end{equation}
Here is the proof of part (a), we justify the steps below.
\begin{equation}
\begin{aligned}
\Pi_G(C\circ\phi)^*&=[\cosocle(\Ind_{\overline P(\R)}^{G(\R)}(\Pi_{M'}(\phi)^*))]^*\\
&=
[\cosocle(\Ind_{\overline P(\R)}^{G(\R)}(\Pi_{M'}(\phi))^*)]^*\\
&=
[\socle(\Ind_{\overline P(\R)}^{G(\R)}(\Pi_{M'}(\phi)))^*]^*\\
&=
\cosocle(\Ind_{P(\R)}^{G(\R)}(\Pi_{M'}(\phi)))=\Pi_G(\phi).
\end{aligned}
\end{equation}
\end{subequations}

The first step is just the contragredient of \eqref{e:ind}(b). For the 
second, integration over $G(\R)/P(\R)$ is a pairing
between $\Ind_{P(\R)}^{G(\R)}(\pi^*)$ and $\Ind_{P(\R)}^{G(\R)}(\pi)^*$,
and gives
\begin{equation}
\label{e:indstar}
\Ind_{P(\R)}^{G(\R)}(\pi^*)\simeq
\Ind_{P(\R)}^{G(\R)}(\pi)^*.
\end{equation}
For the next step use $\cosocle(X^*)=\socle(X)^*$.
Then the double dual cancels for irreducible representations, and
it is well known that the theory of intertwining operators gives:
\begin{equation}
\socle(\Ind_{\overline P(\R)}^{G(\R)}(\pi))=
\cosocle(\Ind_{P(\R)}^{G(\R)}(\pi)).
\end{equation}
Finally plugging in \eqref{e:ind}(a) gives part (a) of the theorem.

For (b) we show that
$C\circ\phi$ is $\ch G$-conjugate to $\phi\circ\tau$. 

Recall $\tau$ is any automorphism of $W_\R$ acting by inverse on
$\C^*$, and any two such $\tau$ are conjugate by $\iNT(z)$ for $z\in\C^*$. 
Therefore the statement is independent of the choice of $\tau$.
It is convenient to choose $\tau(j)=j\inv$, i.e.,
$\tau=\tau_{-1}$ in the notation of the proof of Lemma \ref{l:tau}.

By \eqref{e:phi}(a) $(C\circ\phi)(z)=C(\phi(z))=z^{-\lambda}\overline
z^{-\lambda'}$. Recall (Lemma \ref{l:tau}) that $\tau(z)=z\inv$ for all $z\in\C^*\subset W_\R$, 
so $(\phi\circ\tau)(z)=\phi(z\inv)=
z^{-\lambda}\overline
z^{-\lambda'}$.
Therefore it is enough to show $C(\phi(j))$ is $\ch H_0$-conjugate to 
$\phi(\tau(j))$, which equals $\phi(j)\inv$ by our choice of $\tau$.

Since $\phi(j)$ normalizes $\ch H_0$, $\phi(j)=g\ch\delta$ 
with $g\in \Norm_{\ch G}(\ch H_0)$. 
Then $g\ch\delta(g)\phi(j)^2=\phi(-1)\in\ch H_0$. 
Therefore the  image $w$ of $g$ in $W$ satisfies $w\theta_0(w)=1$. 
Apply Lemma \ref{l:tits}(b).
\end{proof}

\begin{remark}
\label{r:dual}
The main result of \cite{real_chevalley}, together with Lemma 
\ref{l:natural}, gives an alternative proof of Theorem \ref{t:main}. 
By \cite{real_chevalley}*{Theorem 1.2}, there is a ``real'' Chevalley involution  $C_\R$ of $G$, which is defined over $\R$. This satisfies: $\pi^{C_\R}\simeq\pi^*$ 
for any irreducible representation $\pi$.
The transpose automorphism $C_\R^t$ of $\LG$ of Lemma \ref{l:natural}
is the Chevalley automorphism of $\LG$.
Then Lemma \ref{l:natural}  applied to  $C_\R$ implies Theorem \ref{t:main}.
\end{remark}
\sec{Hermitian Dual}
\label{s:hermitian}

Suppose $\pi$ is an admissible representation of $G(\R)$. 
We briefly recall what it means for $\pi$ to have an invariant
Hermitian form, and the notion of the 
Hermitian dual of
$\pi$. See \cite{knappvogan} for details, and for the connection with
unitary representations.

Let $\sigma$ be the antiholomorphic involution of $\G$ with fixed
points $\g(\R)=\Lie(G(\R))$.  We can and do assume that $\sigma$
commutes with the Cartan involution $\theta$, and write $\sigma$ for
the corresponding automorphism of $K(\C)$, so $K(\C)^\sigma$ is a
maximal compact subgroup of $K(\R)$.

We say a $(\g,K)$-module $(\pi,V)$, or simply $\pi$, is Hermitian if there is a nondegenerate Hermitian form
$(\,,\,)$ on $V$, satisfying
\begin{subequations}
\renewcommand{\theequation}{\theparentequation)(\alph{equation}}  
\begin{equation}
(\pi(X)v,w)+(v,\pi(\sigma(X))w)=0\quad(v,w\in V,X\in\g),
\end{equation}
and
\begin{equation}
(\pi(k)v,\pi(\sigma(k))w)=(v,w)\quad(v,w\in V, k\in K).
\end{equation}
\end{subequations}

Define the Hermitian dual $(\pi^h,V^h)$ as follows (compare 
\eqref{e:contragredient}). 
Define a representation of $\g$ on the space of conjugate-linear
functions $V\rightarrow\C$ by
\begin{subequations}
\renewcommand{\theequation}{\theparentequation)(\alph{equation}}  
\begin{equation}
\pi^h(X)(f)(v)=-f(\pi(\sigma(X))v)\quad (v\in V,X\in\g)
\end{equation}
and
\begin{equation}
\pi^h(k)(f)(v)=f(\pi(\sigma(k\inv))v)\quad (v\in V,k\in K).
\end{equation}
\end{subequations}
Let $V^h$ be the $K$-finite functions; then $(\pi^h,V^h)$ is a $(\g,K)$-module.
If 
$\pi$ is irreducible then $\pi$ is Hermitian if and only if 
$\pi\simeq\pi^h$.

Fix a Cartan subgroup $H$ of $G$.
Identify an infinitesimal character $\chiinf$ with (the Weyl group orbit of) 
an element
$\lambda\in\Hom(\h,\C)$, by the Harish-Chandra
homomorphism. 
Define $\lambda\rightarrow\overline\lambda$ with respect to the real
form $X^*(H)\otimes\R$ of $\Hom(\h,\C)$, and write $\overline{\chiinf}$ for the
corresponding action on infinitesimal characters. This is
well-defined, independent of all choices.

For simplicity we restrict to $GL(n,\R)$ from now on.

\begin{lemma}
Suppose $\pi$ is an admissible representation of $GL(n,\R)$, admitting
an infinitesimal character $\chiinf(\pi)$, and a central character
$\chi(\pi)$.
Then:
\begin{enumerate}
\item[(a)] $\chiinf(\pi^h)=-\overline{\chiinf(\pi)}$,
\item[(b)] $\chi(\pi^h)=\chi(\pi)^h$,
\item[(c)] Suppose $P(\R)=M(\R)N(\R)$ is a parabolic subgroup of $GL(n,\R)$, 
and $\pi_M$ is an  admissible representation of
$M(\R)$. Then
$\Ind_{P(\R)}^{G(\R)}(\pi_M^h)\simeq\Ind_{P(\R)}^{G(\R)}(\pi_M)^h$.
\end{enumerate}
\end{lemma}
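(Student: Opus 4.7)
The plan is to reduce all three claims to analogous, well-known properties of the ordinary contragredient $\pi^*$ and the complex-conjugate representation $\overline{\pi}$ (obtained by pulling $\pi$ back along the antiholomorphic involution $\sigma$ that cuts out $G(\R)$). The key observation is the canonical isomorphism
\[
\pi^h\;\simeq\;\overline{\pi^*}\;\simeq\;(\overline{\pi})^*,
\]
which one reads off directly from the defining formula $\pi^h(X)f(v)=-f(\pi(\sigma X)v)$ by noting that the conjugate-linearity of $f$ is precisely what encodes the passage to $\overline{V^*}$. Under this dictionary, the three claims decompose as combinations of two families of standard facts: $*$ commutes with $\Ind$, negates infinitesimal character, and inverts central character; and $\overline{\,\cdot\,}$ commutes with $\Ind$ (since $P$ is defined over $\R$), complex-conjugates infinitesimal character, and complex-conjugates central character.

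For part (1), the composition gives $\chiinf(\pi^h)=\chiinf(\overline{\pi^*})=\overline{\chiinf(\pi^*)}=\overline{-\chiinf(\pi)}=-\overline{\chiinf(\pi)}$. For part (2), the central character of $\overline{\pi^*}$ is $z\mapsto \overline{\chi(\pi)(z^{-1})}$, which is exactly $\chi(\pi)^h(z)$ by the definition of the Hermitian dual of a one-dimensional representation. To verify the dictionary directly I would extend $\pi^h(X)f(v)=-f(\pi(\sigma X)v)$ to $U(\g)$ by induction to obtain $\pi^h(z)f(v)=f(\pi(\sigma\tau(z))v)$, where $\tau$ is the principal antiautomorphism of $U(\g)$; restricting to $z\in Z(\g)$, the scalar $\chiinf(\pi)(\sigma\tau z)$ passes through the conjugate-linear $f$ with one complex conjugation, and tracking that $\tau$ corresponds to $\lambda\mapsto -\lambda$ and $\sigma$ to complex conjugation on $X^*(H)\otimes\R$ recovers $\chiinf(\pi^h)=-\overline{\chiinf(\pi)}$. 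Specializing to $z\in Z(G(\R))$ (so $\sigma(z)=z$) gives part (2) in the same way.

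For part (3), the standard construction produces $\Ind(\pi_M^*)\simeq\Ind(\pi_M)^*$ via a $G(\R)$-invariant pairing obtained by integrating the natural evaluation over $K(\R)$ (using the Iwasawa decomposition), with modular characters cancelling because of normalized induction; and the identification $\overline{\Ind(\pi_M)}\simeq\Ind(\overline{\pi_M})$ is immediate because $P$ is defined over $\R$, so the entire induction construction commutes with the antiholomorphic involution. Composing gives
\[
\Ind(\pi_M^h)\;\simeq\;\Ind(\overline{\pi_M^*})\;\simeq\;\overline{\Ind(\pi_M^*)}\;\simeq\;\overline{\Ind(\pi_M)^*}\;\simeq\;\Ind(\pi_M)^h.
\]
The main obstacle I anticipate is setting up the dictionary $\pi^h\simeq \overline{\pi^*}$ cleanly, in particular tracking the complex structures on $V^h$ versus $\overline{V^*}$ so that the conjugate-linearity of $f$ is threaded consistently through the scalar actions, through the principal antiautomorphism computation of part (1), and through the induction pairing in part (3). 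Once that bookkeeping is in hand, the three parts are formal consequences of familiar properties of $*$ and $\overline{\,\cdot\,}$.
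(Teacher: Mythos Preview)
Your strategy is correct, and the dictionary $\pi^h\simeq\overline{\pi^*}\simeq(\overline\pi)^*$ is exactly the right organizing principle; once it is in place all three parts are formal. The paper, however, does not make this factorization explicit. For (1) the paper instead argues that the identity is easy for a minimal principal series (where one can compute directly from the inducing character) and then uses the fact that every irreducible $(\g,K)$-module of $GL(n,\R)$ embeds in a minimal principal series; (2) is dismissed as elementary; and (3) is treated as a direct variant of the pairing argument for $\Ind(\pi^*)\simeq\Ind(\pi)^*$ already used earlier in the paper, rather than by separating it into a $*$-step and a $\overline{\,\cdot\,}$-step.

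What your approach buys is uniformity: one identification handles all three parts simultaneously, and the individual ingredients (behavior of $*$ and of $\overline{\,\cdot\,}$ on infinitesimal characters, central characters, and induction) are each textbook facts. What the paper's approach buys is economy of setup: by reducing (1) to the principal series case and treating (3) as a one-line modification of the contragredient pairing, it avoids having to check carefully that the conjugate-linear identification $V^h\simeq\overline{V^*}$ is compatible with all the structures in play---precisely the bookkeeping you flagged as the main obstacle.
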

In (b) $\chi(\pi)^h$ refers to the Hermitian dual of 
the one-dimensional representation of $Z(G(\R))=\R^*$. 

\begin{proof}
The first assertion is easy if $\pi$ is a minimal principal series
representation. Since any irreducible representation embeds in a
minimal principal series, (a) follows.
Statement (b) is elementary, and (c) is an easy variant of
\eqref{e:indstar}. We leave the details to the reader.  
\end{proof}

Now suppose $\phi$ is a finite-dimensional  representation of
$W_\R$ on a complex vector space $V$. Define a 
representation $\phi^h$ on the space $V^h$ of  conjugate linear functions
$V\rightarrow\C$ by:
$\phi(w)(f)(v)=f(\phi(w\inv) v)$
$(w\in W_\R,f\in V^h, v\in V)$. 
Choosing dual bases of $V,V^h$, 
identify $GL(V),GL(V^h)$ with $GL(n,\C)$, to write
$\phi^h=\phantom{}^t\overline{\phi}\inv$.

It is elementary  that $\phi$ has a nondegenerate invariant Hermitian form
if and only if
$\phi\simeq\phi^h$. 

Recall from the introduction that irreducible admissible
representations of $GL(n,\R)$ are  parametrized by equivalence classes
of $n$-dimensional semisimple representations of $W_\R$. Write
$\phi\rightarrow\pi(\phi)$ 
for this correspondence.

\begin{lemma}
Suppose $\phi$ is an $n$-dimensional semisimple representation of $W_\R$. 
Then
\begin{enumerate}
\item $\chiinf(\phi^h)=-\overline{\chiinf(\phi)}$,
\item $\chirad(\phi^h)=\chirad(\phi)^h$.
\end{enumerate}
\end{lemma}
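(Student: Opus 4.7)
Since $GL(n,\R)$ is split, $\LG = GL(n,\C)$ carries trivial Galois action; after conjugating by $\ch G$ I may assume $\phi(\C^*)\subset\ch H_0$ is diagonal and $\phi(j)\in\Norm_{\ch G}(\ch H_0)$, so $\phi$ takes the standard form \eqref{e:phi},
\[
\phi(z)=z^\lambda\overline z^{\lambda'},\qquad \phi(j)=h\sigma_w\ch\delta,
\]
with $\lambda,\lambda'\in\C^n$. The relation $\phi(j)\phi(z)\phi(j)^{-1}=\phi(\overline z)$ forces $\lambda'=w(\lambda)$ for the Weyl element $w$ attached to $\sigma_w$. Using the componentwise identity $\overline{z^a\overline z^b}=z^{\overline b}\overline z^{\overline a}$, a direct diagonal calculation from $\phi^h={}^t\overline\phi^{\,-1}$ gives
\[
\phi^h(z)=z^{-\overline{\lambda'}}\,\overline z^{\,-\overline\lambda}.
\]

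For (1), the infinitesimal character of an admissible homomorphism is, by Section \ref{s:chars}, the Weyl orbit of the exponent on the holomorphic variable; so $\chiinf(\phi)=[\lambda]$ and $\chiinf(\phi^h)=[-\overline{\lambda'}]$. Since $\lambda'=w(\lambda)$ is Weyl-conjugate to $\lambda$ and $W=S_n$ commutes with complex conjugation, $[-\overline{\lambda'}]=-\overline{[\lambda]}=-\overline{\chiinf(\phi)}$.

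For (2), the radical $\Grad$ of $GL(n)$ is the scalar torus and $\ch\iota:\LG\to\ch\Grad=\C^*$ is the determinant, so $\chirad(\phi)$ is the character of $\R^*$ attached by Lemma \ref{l:egroup} to the one-dimensional parameter $\det\phi:W_\R\to\C^*$. From $\phi^h={}^t\overline\phi^{\,-1}$ we read off $\det(\phi^h)=\overline{\det\phi}^{\,-1}$, so the claim reduces to the one-dimensional assertion that the Langlands correspondence for $\R^*$ intertwines $\psi\mapsto\overline\psi^{\,-1}$ with $\chi\mapsto\overline\chi^{\,-1}$. The inversion half is Lemma \ref{l:Ctorus} applied to $\Grad$. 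For complex conjugation, I parametrize a character of $W_\R$ by $(\lambda,\mu)\in\C\times\C$ as in \eqref{e:phi1}; since $\Grad$ is split over $\R$ we have $\ch\theta=1$, so $\kappa=-2\mu$ and the cocycle condition $\phi(j)^2=\phi(-1)$ forces $2\mu\in\Z$. In particular $\mu\in\R$, hence $\overline\mu=\mu$, and substituting $(\overline\lambda,\mu)$ into Lemma \ref{l:egroup} yields the character with differential $\overline\lambda$ and the same restriction to $\{\pm 1\}$, i.e., $\overline\chi$.

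\textbf{Main obstacle.} Part (1) is essentially immediate from the formula for $\phi^h(z)$. All the real content sits in the one-dimensional check inside (2): once one notes that the cocycle condition on the split torus forces the auxiliary parameter $\mu$ to be real, complex conjugation on $W_\R$-parameters visibly matches complex conjugation on $\R^*$-characters, and combining with Lemma \ref{l:Ctorus} finishes the proof. There is no genuine difficulty, only a short sign-and-normalization check.
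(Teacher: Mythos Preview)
Your approach is essentially the paper's own: for (1) you compute $\phi^h(z)=z^{-\overline{\lambda'}}\overline z^{\,-\overline\lambda}$ and use that $\lambda'$ is Weyl--conjugate to $\lambda$, exactly as in the text; for (2) the paper simply says ``comes down to the case of tori, which we leave to the reader,'' and you have supplied that torus check via $\det\phi$ and Lemma~\ref{l:Ctorus}.

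One small slip in your torus calculation: from $\mu\in\tfrac12\Z\subset\R$ you conclude ``$\overline\mu=\mu$, so substitute $(\overline\lambda,\mu)$.''  But $\overline{\,e^{2\pi i\mu}}=e^{-2\pi i\mu}$, so the parameter for $\overline\psi$ is $(\overline\lambda,-\mu)$, not $(\overline\lambda,\mu)$.  This does not affect the conclusion: the associated $\kappa$ changes sign, and since $\kappa\in\Z$ is only relevant modulo $(1-\theta)X^*(H)=2\Z$, one has $(-1)^{-\kappa}=(-1)^{\kappa}$, so the character of $\{\pm1\}$ is unchanged and you still get $\overline\chi$.  With that correction your argument is complete.
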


\begin{proof}
Let $\ch H$ be the diagonal torus in $GL(n,\C)$.
As in \eqref{e:phi}, write $\phi(z)=z^\lambda\overline z^{\lambda'}$, 
so $\chiinf(\phi)=\lambda$.
On the other hand $\phi^h(z)=\overline{z^\lambda\overline
  z^{\lambda'}}\inv$,
and it is easy to see this equals
$z^{-\overline{\lambda'}}\overline z^{-\overline\lambda}$,
where $\overline\lambda$ is complex conjugatation with respect to
$X_*(\ch H)\otimes\R$. Therefore $\chiinf(\phi^h)=-\overline{\lambda'}$. 
Then (1) follows from the fact that, by \eqref{e:phi}(d), $\lambda$ is $GL(n,\C)$-conjugate
to $\lambda'$.

The second claim comes down to the case of tori, which we leave to the
reader.

\end{proof}

\begin{proof}[Proof of Theorem \ref{t:hermitian}]
The 
equivalence of (1) and (2) follow from the preceding discussion. 
For (3), it is well known (and a straightforward exercise)
 that $\pi(\phi)$ is tempered if and only if $\phi(W_\R)$ is
bounded \cite{borelCorvallis}*{10.3(4)},
which is equivalent to $\phi$
being unitary.

The proof of (1) is parallel to that of Theorem \ref{t:main}, 
using the previous two lemmas, and 
our characterization of the Langlands classification
in terms of infinitesimal character, radical character, and compatibility with
parabolic induction. We leave the few remaining details to the reader.
\end{proof}

\bibliographystyle{plain}
\def\cprime{$'$} \def\cftil#1{\ifmmode\setbox7\hbox{$\accent"5E#1$}\else
  \setbox7\hbox{\accent"5E#1}\penalty 10000\relax\fi\raise 1\ht7
  \hbox{\lower1.15ex\hbox to 1\wd7{\hss\accent"7E\hss}}\penalty 10000
  \hskip-1\wd7\penalty 10000\box7}
  \def\cftil#1{\ifmmode\setbox7\hbox{$\accent"5E#1$}\else
  \setbox7\hbox{\accent"5E#1}\penalty 10000\relax\fi\raise 1\ht7
  \hbox{\lower1.15ex\hbox to 1\wd7{\hss\accent"7E\hss}}\penalty 10000
  \hskip-1\wd7\penalty 10000\box7}
  \def\cftil#1{\ifmmode\setbox7\hbox{$\accent"5E#1$}\else
  \setbox7\hbox{\accent"5E#1}\penalty 10000\relax\fi\raise 1\ht7
  \hbox{\lower1.15ex\hbox to 1\wd7{\hss\accent"7E\hss}}\penalty 10000
  \hskip-1\wd7\penalty 10000\box7}
  \def\cftil#1{\ifmmode\setbox7\hbox{$\accent"5E#1$}\else
  \setbox7\hbox{\accent"5E#1}\penalty 10000\relax\fi\raise 1\ht7
  \hbox{\lower1.15ex\hbox to 1\wd7{\hss\accent"7E\hss}}\penalty 10000
  \hskip-1\wd7\penalty 10000\box7} \def\cprime{$'$} \def\cprime{$'$}
  \def\cprime{$'$} \def\cprime{$'$} \def\cprime{$'$} \def\cprime{$'$}
  \def\cprime{$'$} \def\cprime{$'$}

\begin{bibdiv}
\begin{biblist}

\bib{characters}{incollection}{
      author={Adams, J.},
       title={Computing {G}lobal {C}haracters},
        date={2011},
   booktitle={Representation theory and mathematical physics},
      series={Contemp. Math.},
      volume={557},
   publisher={Amer. Math. Soc.},
     address={Providence, RI},
       pages={79\ndash 112},
         url={http://dx.doi.org/10.1090/conm/557/11027},
      review={\MR{2848921}},
}

\bib{real_chevalley}{article}{
      author={Adams, Jeffrey},
       title={The real {C}hevalley involution},
        date={2014},
        ISSN={0010-437X},
     journal={Compos. Math.},
      volume={150},
      number={12},
       pages={2127\ndash 2142},
         url={http://dx.doi.org/10.1112/S0010437X14007374},
      review={\MR{3292297}},
}

\bib{abv}{book}{
      author={Adams, J.},
      author={Barbasch, Dan},
      author={Vogan, David~A., Jr.},
       title={The {L}anglands classification and irreducible characters for
  real reductive groups},
      series={Progress in Mathematics},
   publisher={Birkh\"auser Boston Inc.},
     address={Boston, MA},
        date={1992},
      volume={104},
        ISBN={0-8176-3634-X},
      review={\MR{MR1162533 (93j:22001)}},
}

\bib{algorithms}{article}{
      author={Adams, J.},
      author={du~Cloux, Fokko},
       title={Algorithms for representation theory of real reductive groups},
        date={2009},
        ISSN={1474-7480},
     journal={J. Inst. Math. Jussieu},
      volume={8},
      number={2},
       pages={209\ndash 259},
      review={\MR{MR2485793}},
}

\bib{unitaryDual}{article}{
      author={Adams, J.},
      author={Trapa, Peter},
      author={van Leeuwen, Marc},
      author={Vogan, David A.~Jr.},
       title={Unitary dual of real reductive groups},
        note={preprint, arXiv:1212.2192},
}

\bib{av1}{article}{
      author={Adams, J.},
      author={Vogan, David~A., Jr.},
       title={{$L$}-groups, projective representations, and the {L}anglands
  classification},
        date={1992},
        ISSN={0002-9327},
     journal={Amer. J. Math.},
      volume={114},
      number={1},
       pages={45\ndash 138},
      review={\MR{MR1147719 (93c:22021)}},
}

\bib{borelCorvallis}{incollection}{
      author={Borel, A.},
       title={Automorphic {$L$}-functions},
        date={1979},
   booktitle={Automorphic forms, representations and {$L$}-functions ({P}roc.
  {S}ympos. {P}ure {M}ath., {O}regon {S}tate {U}niv., {C}orvallis, {O}re.,
  1977), {P}art 2},
      series={Proc. Sympos. Pure Math., XXXIII},
   publisher={Amer. Math. Soc.},
     address={Providence, R.I.},
       pages={27\ndash 61},
      review={\MR{546608 (81m:10056)}},
}

\bib{borel_tits}{article}{
      author={Borel, Armand},
      author={Tits, Jacques},
       title={Groupes r\'eductifs},
        date={1965},
        ISSN={0073-8301},
     journal={Inst. Hautes \'Etudes Sci. Publ. Math.},
      number={27},
       pages={55\ndash 150},
      review={\MR{0207712 (34 \#7527)}},
}

\bib{chevalley_simples}{article}{
      author={Chevalley, C.},
       title={Sur certains groupes simples},
        date={1955},
        ISSN={0040-8735},
     journal={T\^ohoku Math. J. (2)},
      volume={7},
       pages={14\ndash 66},
      review={\MR{0073602 (17,457c)}},
}

\bib{chevalley_III}{book}{
      author={Chevalley, Claude},
       title={Th\'eorie des groupes de {L}ie. {T}ome {III}. {T}h\'eor\`emes
  g\'en\'eraux sur les alg\`ebres de {L}ie},
      series={Actualit\'es Sci. Ind. no. 1226},
   publisher={Hermann \& Cie, Paris},
        date={1955},
      review={\MR{0068552 (16,901a)}},
}

\bib{harris_taylor}{book}{
      author={Harris, Michael},
      author={Taylor, Richard},
       title={The geometry and cohomology of some simple {S}himura varieties},
      series={Annals of Mathematics Studies},
   publisher={Princeton University Press},
     address={Princeton, NJ},
        date={2001},
      volume={151},
        ISBN={0-691-09090-4},
        note={With an appendix by Vladimir G. Berkovich},
      review={\MR{MR1876802 (2002m:11050)}},
}

\bib{henniart_llc}{article}{
      author={Henniart, Guy},
       title={Une preuve simple des conjectures de {L}anglands pour {${\rm
  GL}(n)$} sur un corps {$p$}-adique},
        date={2000},
        ISSN={0020-9910},
     journal={Invent. Math.},
      volume={139},
      number={2},
       pages={439\ndash 455},
         url={http://dx.doi.org/10.1007/s002220050012},
      review={\MR{1738446 (2001e:11052)}},
}

\bib{kaletha_genericity}{article}{
      author={Kaletha, Tasho},
       title={Genericity and contragredience in the local {L}anglands
  correspondence},
        date={2013},
        ISSN={1937-0652},
     journal={Algebra Number Theory},
      volume={7},
      number={10},
       pages={2447\ndash 2474},
         url={http://dx.doi.org/10.2140/ant.2013.7.2447},
      review={\MR{3194648}},
}

\bib{knappvogan}{book}{
      author={Knapp, Anthony~W.},
      author={Vogan, David~A., Jr.},
       title={Cohomological induction and unitary representations},
      series={Princeton Mathematical Series},
   publisher={Princeton University Press},
     address={Princeton, NJ},
        date={1995},
      volume={45},
        ISBN={0-691-03756-6},
      review={\MR{MR1330919 (96c:22023)}},
}

\bib{langlandsClassification}{incollection}{
      author={Langlands, R.~P.},
       title={On the classification of irreducible representations of real
  algebraic groups},
        date={1989},
   booktitle={Representation theory and harmonic analysis on semisimple {L}ie
  groups},
      series={Math. Surveys Monogr.},
      volume={31},
   publisher={Amer. Math. Soc.},
     address={Providence, RI},
       pages={101\ndash 170},
      review={\MR{1011897 (91e:22017)}},
}

\bib{lusztig_remarks}{article}{
      author={Lusztig, G.},
       title={Remarks on {S}pringer's representations},
        date={2009},
        ISSN={1088-4165},
     journal={Represent. Theory},
      volume={13},
       pages={391\ndash 400},
         url={http://dx.doi.org/10.1090/S1088-4165-09-00358-6},
      review={\MR{2540702 (2011a:17010)}},
}

\bib{shelstad_innerforms}{article}{
      author={Shelstad, D.},
       title={Characters and inner forms of a quasi-split group over {${\bf
  R}$}},
        date={1979},
        ISSN={0010-437X},
     journal={Compositio Math.},
      volume={39},
      number={1},
       pages={11\ndash 45},
      review={\MR{MR539000 (80m:22023)}},
}

\bib{shelstad_structure}{article}{
      author={Shelstad, Diana},
       title={On the structure of endoscopic transfer factors},
        date={2014},
        note={arXiv:hep-th/9112047},
}

\bib{springer_book}{book}{
      author={Springer, T.~A.},
       title={Linear algebraic groups},
     edition={Second},
      series={Progress in Mathematics},
   publisher={Birkh\"auser Boston Inc.},
     address={Boston, MA},
        date={1998},
      volume={9},
        ISBN={0-8176-4021-5},
      review={\MR{MR1642713 (99h:20075)}},
}

\bib{tits_group}{article}{
      author={Tits, J.},
       title={Normalisateurs de tores. {I}. {G}roupes de {C}oxeter \'etendus},
        date={1966},
        ISSN={0021-8693},
     journal={J. Algebra},
      volume={4},
       pages={96\ndash 116},
      review={\MR{MR0206117 (34 \#5942)}},
}

\bib{vogan_green}{book}{
      author={Vogan, David~A., Jr.},
       title={Representations of real reductive {L}ie groups},
      series={Progress in Mathematics},
   publisher={Birkh\"auser Boston},
     address={Mass.},
        date={1981},
      volume={15},
        ISBN={3-7643-3037-6},
      review={\MR{MR632407 (83c:22022)}},
}

\bib{ic4}{article}{
      author={Vogan, David~A., Jr.},
       title={Irreducible characters of semisimple {L}ie groups. {IV}.
  {C}haracter-multiplicity duality},
        date={1982},
        ISSN={0012-7094},
     journal={Duke Math. J.},
      volume={49},
      number={4},
       pages={943\ndash 1073},
      review={\MR{MR683010 (84h:22037)}},
}

\end{biblist}
\end{bibdiv}

\end{document}